\newtheorem{thm}{Theorem}[section]
\newtheorem{prop}[thm]{Proposition}
\newtheorem{cor}[thm]{Corollary}
\newtheorem{lem}[thm]{Lemma}
\theoremstyle{remark}
\newtheorem{remark}{Remark}[section]
\theoremstyle{definition}
\renewcommand{\arraystretch}{1.6}
\newcommand*\isom{%
  \xrightarrow{\sim}%
}
\def\qq{\mathbb{Q}}
\def\rr{\mathbb{R}}
\def\zz{\mathbb{Z}}
\def\cc{\mathbb{C}}
\def\mm{\mathcal{M}}
\def\aa{\mathcal{A}}
\def\bb{\mathcal{B}}
\def\jj{\mathcal{J}}
\def\tt{\mathcal{T}}
\def\oo{\mathcal{O}}
\def\dd{\mathcal{D}}
\def\ee{\mathcal{E}}
\def\uu{\mathbb{U}}
\def\UU{\mathcal{U}}
\def\hh{\mathcal{H}}
\def\deldelbar{\partial \bar{\partial}}
\def\Im{\mathrm{Im}\,}
\newcommand{\beq}{\begin{equation}}
\newcommand{\eeq}{\end{equation}}
\newcommand{\pair}[1]{\langle#1\rangle}
\newcommand{\abs}[1]{|#1|}
\newcommand{\divisor}{\operatorname{div}}
\newcommand{\ord}{\operatorname{ord}}
\newcommand{\Pic}{\operatorname{Pic}}
\newcommand{\codim}{\operatorname{codim}}
\newcommand{\R}{\operatorname{R}}
\numberwithin{equation}{section}
\begin{document}

\title[Asymptotics of the N\'eron height pairing]{Asymptotics of the N\'eron height pairing}

\author{David Holmes and Robin de Jong}

\subjclass[2010]{Primary 14G40, secondary 14D06, 14D07, 14H15.}

\keywords{Green's function, height jumping, Lear extension, N\'eron pairing, normal function, reduction graph.}

\begin{abstract}  The aim of this paper is twofold. First, we study the asymptotics of the N\'eron height pairing between degree-zero divisors on a family of degenerating compact Riemann surfaces parametrized by an algebraic curve. We show that if the monodromy is unipotent the leading term of the asymptotic formula is controlled by the local non-archimedean N\'eron height pairing on the generic fiber of the family. Second, we prove a conjecture of R. Hain to the effect that the `height jumping divisor' related to the normal function $(2g-2)x-K$ on the moduli space $\mm_{g,1}$ of $1$-pointed curves of genus $g \geq 2$ is effective. Both results follow from a study of the degeneration of the canonical metric on the Poincar\'e bundle on a family of principally polarized abelian varieties.
\end{abstract}

\maketitle

\thispagestyle{empty}

\section{Introduction}

The N\'eron height pairing is a canonical real-valued pairing between divisors $D,E$ of degree zero and with disjoint support on a compact Riemann surface. The pairing can be defined by taking a real Green's function $g_D$ for $D$, harmonic outside the support of $D$, and evaluating $g_D$ on $E$. The N\'eron pairing $g_D[E]$ is symmetric and bi-additive, and plays a prominent role in the arithmetic geometry of curves over number fields where it serves as an archimedean contribution to the global canonical height pairing between points on the jacobian \cite{gr}. It is a special case of Arakelov's pairing \cite{ar} between divisors of arbitrary degree on a compact Riemann surface.

In this paper we study the behavior of the N\'eron height pairing between generically disjoint families of degree zero divisors on a family $X\to S$ of compact Riemann surfaces, where the base manifold $S$ is a complex algebraic curve. Write $p$ for a point on $S$, and let $\overline{S}$ be a smooth compactification of $S$. When the point $p$ tends to a boundary point $s$ of $S$ in $\overline{S}$, so that the fibers of the family $X \to S$ may degenerate, the function $p \mapsto g_{D_p}[E_p]$ on $S$ typically acquires a singularity. One would like to investigate the shape of this singularity as $p$ tends to $s$.

Let $t$ be a local parameter around $s$ on $\overline{S}$, and assume that the local monodromy of the family $X \to S$ around the point $s$ is unipotent.  Then general Hodge theoretic results of R. Hain \cite{hainbiext} and D. Lear \cite{lear} from the early 1990s and of G. Pearlstein \cite{pearl} from 2005 imply that the singularity of the function $g_{D_p}[E_p]$ as $p \to s$ is of a very simple type, namely $g_{D_p}[E_p] \sim e \log|t(p)|$ for some rational number $e$. Here the notation $\sim$ means that the difference between both terms extends as a bounded continuous function over a small open disc in $\overline{S}$ containing $s$. The number $e$ can be expressed in terms of the local monodromy and the weight filtration on a canonical variation of mixed Hodge structure $\mathcal{V}_p$ on $S$ associated to the pair of divisors $D,E$ (cf. \cite[Theorem 5.19]{pearl}). \\

A first aim of this paper is to make these general results somewhat more explicit in the special case of families of curves. Note that the results of Hain, Lear and Pearlstein referred to above are set up in a much broader context, where the proper flat family $X \to S$ possibly has higher dimensional fibers, and the object of study is the asymptotic behavior of the archimedean height pairing \cite{beil} \cite{bost} \cite{gs} \cite{hainbiext} associated to two flat families of cycles over $S$, homologically trivial in each fiber. Restricting to families of curves gives us access to more specialized tools, such as Deligne pairing and determinant of cohomology \cite{de}, the existence of a flat family of positive definite hermitian forms on the associated family of jacobian varieties, and the algebraic theory of N\'eron models \cite{blr}.

Still assuming that the local monodromy of the family $X \to S$ around the point $s$ is unipotent, Theorem \ref{main} of the present paper interprets the number $e$ in terms of the intersection behavior of (extensions of) the divisors $D,E$ on a smooth compactification $\overline{X}$ of the complex surface $X$. More precisely, the coefficient $e$ turns out to be equal to a variant $\pair{D,E}_{	\textrm{a},s}$ of the local intersection multiplicity of $D,E$ at $s$, taking into account the fact that although $D,E$ are generically of degree zero, they may fail to be of degree zero on each component of the special fiber of $\overline{X}$ above~$s$.

More precisely, the rational number $\pair{D,E}_{\textrm{a},s}$ equals the local \emph{non-archimedean} N\'eron pairing, with respect to the discrete valuation on the function field of $S$ associated to $s$, of the restrictions of $D,E$ to the generic fiber of $X \to S$, as defined in \cite{gr} and \cite{zh}. Our asymptotic formula thus shows a remarkable compatibility between the archimedean and non-archimedean N\'eron height pairings. In the special case where the special fiber of $\overline{X}$ at $s$ has just \emph{one} node, our formula is implied by \cite[Theorems 6.10 and 7.2]{we}, where asymptotics are derived for Arakelov's pairing (on divisors which are not necessarily of degree zero). \\

Let $Y$ be a complex manifold and consider a polarized variation of Hodge structure $\uu$ of weight $-1$ over $Y$. Let $\jj(\uu) \to Y$ be the associated family of intermediate jacobians, and assume that a normal function section $\nu \colon Y \to \jj(\uu)$ of $\jj(\uu) \to Y$ is given. The second part of this paper deals with a canonical extension of a certain metrized line bundle on $Y$ associated to these data over a compactification $\overline{Y}$ of $Y$. As it turns out \cite{hainnormal}, the formation of this so-called `Lear extension' (for which the basic source is Lear's PhD thesis \cite{lear}) is \emph{not} usually compatible with pullback along morphisms $T \to Y$. When $T$ is a curve, with smooth compactification $\overline{T}$, the `difference' that occurs can be viewed as a divisor supported on the boundary $\overline{T} \setminus T$ of $T$, called the \emph{height jumping divisor}.

The phenomenon of height jumping is analyzed in detail in \cite{brospearl}, \cite{hainnormal} and \cite{pearl}. In \cite{hainnormal} Hain conjectures that for a collection of examples where $Y$ is a moduli orbifold of smooth pointed curves, the height jumping divisor with respect to morphisms $T \to Y$ where $T$ is a curve should always be \emph{effective}. As is explained in \cite{hainnormal}, the effectivity of the height jumping divisor in these examples has interesting ramifications for finding refined slope inequalities on moduli spaces of curves.

Theorem \ref{hainconj} below implies that the height jumping divisor is indeed effective for the case (mentioned by Hain) where $Y$ is the moduli orbifold $\mm_{g,1}$ of $1$-pointed curves of genus $g \geq 2$, the variation of Hodge structure $\uu$ is the tautological one, and the normal function $\nu$ is the section of the universal jacobian $\jj(\uu)$ over $\mm_{g,1}$ given by sending a point $[C,x]$ of $\mm_{g,1}$ to the class of the degree-zero divisor $(2g-2)x-K_C$. Here $K_C$ is a canonical divisor on $C$.

For the proof we will use our asymptotic analysis of the N\'eron height pairing, or rather its more intrinsic version Theorem \ref{cor}, which describes how the Deligne pairing $\pair{D,E}$ of two degree-zero divisors $D,E$ on $S$ can be extended (up to taking a tensor power) over the smooth compactification $\overline{S}$ as a continuously metrized line bundle. Another significant part of the proof deals with an analysis of the Green's functions induced by $D,E$ on the semistable reduction graphs associated to the degenerate fibers of $\overline{X}$ over $\overline{S}$. The non-archimedean pairing $\pair{D,E}_{\textrm{a},s}$ can be conveniently expressed in terms of these Green's functions.

\section{Statement of the main results} \label{section:results}

We now turn to a more precise formulation of our main results.
Let $S$ denote a smooth connected curve over the complex numbers $\cc$ and let $\pi \colon X \to S$ be a smooth proper morphism with connected one-dimensional fibers. Let $D,E$ be divisors on $X$ of relative degree zero. For general $p \in S$  the restrictions $D_p, E_p$ of $D,E$ to the fiber $X_p$ of $\pi$ above $p$ are divisors on $X_p$, and for such $p$ we consider a real Green's current $g_{D_p}$ on the compact Riemann surface $X_p$ associated to the divisor $D_p$. This $g_{D_p}$ is a generalized function on $X_p$, solving the equation
\begin{equation} \label{defproperty} \deldelbar g_{D_p} + \pi i \delta_{D_p} = 0 \, .
\end{equation}
Note that this equation determines each $g_{D_p}$ up to an additive real constant. For our purposes the choice of this constant will not matter.

Assume that $E_p$ has support disjoint from the support of $D_p$. We will write $g_{D_p}[E_p]$ as a shorthand for $\sum_i a_i g_{D_p}(q_i)$, where $E_p = \sum_i a_i q_i$. The real number $g_{D_p}[E_p]$ is the N\'eron height pairing between $D_p$ and $E_p$. We will be interested in the asymptotic behavior of the function $g_{D_p}[E_p]$ as $p$ approaches the boundary of $S$ in the complex topology. Let $\overline{S}$ denote a smooth compactification of $S$, and let $\overline{X} \to \overline{S}$ be a proper flat morphism extending $X \to S$. We will always make the following assumptions: (a) the surface $\overline{X}$ is smooth over $\cc$, and (b) each fiber of $\overline{X} \to \overline{S}$ is reduced, and has only ordinary double points as singularities. Assumption (b) may equivalently be phrased as saying that the monodromy around each of the points in the boundary of $S$ in $\overline{S}$ is unipotent.

From now on, let $D,E$ be two divisors of relative degree zero on $\overline{X}$. Let $s$ be a closed point in $\overline{S}$. By general properties of the intersection pairing on the special fibers of $\overline{X} \to \overline{S}$, see e.g. \cite[Theorem 9.1.23]{liu}, there exists a unique - up to adding $\qq$-multiples of fibers - vertical $\qq$-divisor $\phi(D)$ on $\overline{X}$ such that $D +\phi(D)$ has zero intersection product with all vertical divisors of $\overline{X}$. Let $\pair{,}_s$ denote the local intersection pairing on $\overline{X}$ over $s$. The local N\'eron height pairing of $D,E$ relative to $s$ is then defined to be the rational number
\[ \pair{D,E}_{\textrm{a},s} = \pair{D+\phi(D),E+\phi(E)}_s =  \pair{D+\phi(D),E}_s \, .  \]
It is straightforward to see that the local N\'eron height pairing relative to $s$ is symmetric and bi-additive and depends only on the restrictions of $D,E$ to the generic fiber of $X \to S$. The pairing coincides with S. Zhang's admissible pairing \cite{zh} restricted to degree zero divisors.
\begin{thm} \label{main} Assume that the supports of $D,E$ are generically disjoint. Let $t$ be a uniformiser on $\overline{S}$ at $s$. Then the asymptotic relation
\[ g_{D_p}[E_p] \, \sim \, \pair{D,E}_{\mathrm{a},s} \log \abs{t(p)} \]
holds as $p \to s$ in the complex topology on $\overline{S}$. Here the notation $\sim$ means that the difference between left and right hand side extends as a bounded continuous function over a small open disc in $\overline{S}$ centered at $s$.
\end{thm}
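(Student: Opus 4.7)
The plan is to interpret the N\'eron height $g_{D_p}[E_p]$ as the log-norm of a canonical section of a Deligne pairing line bundle on $S$, and then to derive the asymptotic by extending this metrized line bundle over $\overline{S}$ using the algebraic intersection theory on $\overline{X}$. More precisely, I would introduce the Deligne pairing
\[
\mathcal{L} \;:=\; \pair{\oo_{\overline{X}}(D + \phi(D)), \oo_{\overline{X}}(E + \phi(E))},
\]
which is a $\qq$-line bundle on $\overline{S}$ whose restriction to $S$ is canonically identified with the usual Deligne pairing $\pair{\oo_X(D), \oo_X(E)}$, since $\phi(D),\phi(E)$ are vertical on $\overline{X}$. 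Under the canonical metric on the latter, the canonical section $\pair{1_D, 1_E}$ satisfies $\log \|\pair{1_D, 1_E}\|_p = g_{D_p}[E_p]$ up to an additive constant.

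By the very construction of the Deligne pairing, $\pair{1_D, 1_E}$ extends to a meromorphic section of $\mathcal{L}$ whose order of vanishing at $s$ equals
\[
\pair{D + \phi(D), E + \phi(E)}_s \;=\; \pair{D, E}_{\mathrm{a}, s}.
\]
Assuming that the canonical metric on $\mathcal{L}|_S$ extends to a bounded and nowhere-zero continuous metric on $\mathcal{L}$ over a neighbourhood of $s$ in $\overline{S}$, I would fix a local trivializing section $\sigma$ of $\mathcal{L}$ near $s$; then $\log \|\sigma\|_p = O(1)$, and writing $\pair{1_D, 1_E} = u \cdot t^{\pair{D, E}_{\mathrm{a}, s}} \cdot \sigma$ with $u$ a unit near $s$ gives
\[
g_{D_p}[E_p] \;=\; \pair{D, E}_{\mathrm{a}, s}\, \log|t(p)| + O(1),
\]
which is exactly the asymptotic claimed in the theorem. (The only wrinkle is that $\phi$ is in general only $\qq$-valued, so this argument naturally takes place on a suitable tensor power of $\mathcal{L}$; since the asymptotic is linear in $(D,E)$, this is harmless.)

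The main obstacle is therefore to show that the canonical metric on $\mathcal{L}$ is continuous and non-degenerate at $s$. Here the role of $\phi$ becomes essential: the defining property $(D+\phi(D))\cdot V = 0$ for every vertical $V$ is precisely Zhang's admissibility condition, forcing $D+\phi(D)$ to have degree zero on every component of the special fiber $\overline{X}_s$. To exploit this I would analyse the pinching behaviour of $g_{D_p}$ as $p\to s$: away from the vanishing cycles $g_{D_p}$ varies continuously in the family, while near each vanishing cycle it is controlled, in the limit, by the combinatorial Green's function on the dual reduction graph $\Gamma_s$ of $\overline{X}_s$. The admissibility of $D+\phi(D)$ ensures that these graph-theoretic contributions are compatible with the harmonic structure on $\Gamma_s$ and combine with the $\pair{D,E}_{\mathrm{a},s}\log|t|$ term to leave only a bounded remainder. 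The identification of the coefficient with the non-archimedean N\'eron pairing of \cite{gr},\cite{zh} is then built into the formalism, since Zhang's admissible pairing is defined via exactly the same Green's functions on $\Gamma_s$.
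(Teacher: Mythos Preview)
Your reduction of the theorem to the continuous extension of the canonical metric on the Deligne pairing $\pair{D+\phi(D),E+\phi(E)}$ over $\overline{S}$, together with the computation $\ord_s\pair{1_D,1_E}=\pair{D,E}_{\mathrm{a},s}$, is exactly the framework the paper uses: this is the content of Theorem~\ref{cor} combined with Lemma~\ref{asympt} and equation~(\ref{formulalognorm}). So the architecture of your argument matches the paper's proof of Theorem~\ref{main} essentially verbatim.

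Where you diverge is in how you propose to establish the metric extension (your ``main obstacle''). You suggest a direct analysis of the degeneration of the archimedean Green's functions $g_{D_p}$ via the pinching geometry of vanishing cycles and the combinatorial Green's function on the dual graph $\Gamma_s$. The paper does \emph{not} do this. Instead it identifies $\pair{D,E}^{\otimes -1}$ isometrically with the pullback $\nu^*\bb_\lambda$ of the Poincar\'e bundle along the normal function into the Jacobian (Corollary~\ref{pairingbiext}, via Moret-Bailly's determinant-of-cohomology formalism), extends $\bb_\lambda$ to $\overline{\bb}_\lambda$ on the N\'eron model (Proposition~\ref{existBbar}), and then proves the metric on $\bb_\lambda$ extends continuously by an explicit computation with the theta function and the asymptotics of the period matrix $\Im j(t)=A\cdot u(t)+\Im B(t)$ (Theorem~\ref{learprop6.1}). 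The admissibility condition on $D+\phi(D)$ enters only to guarantee that the associated section of the Jacobian extends across $s$ into the \emph{identity component} of the N\'eron model, where $\overline{\bb}_\lambda$ lives.

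Your route is in principle viable --- it is the spirit of Wentworth's analysis in \cite{we}, which the paper cites for the one-node case --- but your sketch leaves the genuinely hard analytic work unaddressed: controlling $g_{D_p}$ uniformly near several vanishing cycles simultaneously and showing the remainder is \emph{bounded continuous} (not merely $O(1)$) is delicate, and invoking ``compatibility with the harmonic structure on $\Gamma_s$'' is not yet an argument. The paper's Poincar\'e-bundle route trades this for a cleaner linear-algebra computation with $(\Im j(t))^{-1}$, at the cost of importing the machinery of \cite{mb}. Note also that the graph Green's function on $\Gamma_s$ plays no role in the paper's proof of Theorems~\ref{main} and~\ref{cor}; it appears only later, in Section~\ref{section:graph}, to reinterpret the coefficient $\pair{D,E}_{\mathrm{a},s}$ for the application to Hain's conjecture.
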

Our approach to proving Theorem \ref{main} will be to use the Deligne pairing $\pair{D,E}$ between the divisors $D,E$ on $S$, to be discussed in Section \ref{section:deligne} below. This is a $C^\infty$-hermitian line bundle on $S$. We will derive Theorem \ref{main} from the following, more intrinsic result.
\begin{thm} \label{cor} Let $\langle D,E \rangle$ be Deligne's pairing on $S$ associated to the restrictions of $D,E$ to $X$. Let $m,n$ be positive integers such that $m\phi(D)=\phi(mD)$ and $n\phi(E)=\phi(nE)$ are divisors with integral coefficients on $\overline{X}$. Then the $C^\infty$-hermitian line bundle $\langle D,E \rangle^{\otimes mn}=\langle mD,nE\rangle$ has a unique extension  as a continuous hermitian line bundle over $\overline{S}$. The underlying line bundle of this extension is the Deligne pairing $\langle mD+\phi(mD),nE+\phi(nE)\rangle$ on $\overline{S}$ associated to the divisors $mD+\phi(mD)$ and $nE+\phi(nE)$ on $\overline{X}$.
\end{thm}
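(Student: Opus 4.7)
The plan is to deduce Theorem \ref{cor} from Theorem \ref{main}, reading the latter's asymptotic formula as continuity of a canonical hermitian metric on a specific integral extension of $\pair{D,E}^{\otimes mn}$ over $\overline{S}$. Uniqueness is immediate from the density of $S$ in $\overline{S}$: a continuous hermitian metric on a line bundle over $\overline{S}$ is determined by its restriction to $S$. For existence, set $D' := mD + \phi(mD)$ and $E' := nE + \phi(nE)$, which by hypothesis are integral divisors on $\overline{X}$ with generically disjoint support. Deligne's pairing produces a line bundle $\mathcal{L} := \pair{D',E'}$ on $\overline{S}$, and since $\phi(mD)$ and $\phi(nE)$ are supported on fibers above $\overline{S}\setminus S$, their restrictions to $X$ vanish, yielding a canonical isomorphism $\mathcal{L}|_S \cong \pair{mD,nE} = \pair{D,E}^{\otimes mn}$. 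I transport the $C^\infty$-hermitian metric on $\pair{D,E}^{\otimes mn}|_S$ to $\mathcal{L}|_S$ via this isomorphism; what remains is to show that the resulting metric extends continuously over each boundary point $s\in \overline{S}\setminus S$.

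To check continuity at $s$, I would use the symbol section $\sigma := \pair{1_{D'}, 1_{E'}}$ of $\mathcal{L}$, a rational section defined in a neighbourhood of $s$. The divisor of $\sigma$ on $\overline{S}$ is the pushforward of the intersection cycle $D'\cdot E'$ on the regular surface $\overline{X}$, so by the definition of the local N\'eron height pairing,
\[
\ord_s(\sigma) \;=\; \pair{D',E'}_s \;=\; mn\cdot\pair{D,E}_{\mathrm{a},s}\,.
\]
Under the identification $\mathcal{L}|_S \cong \pair{D,E}^{\otimes mn}$, the section $\sigma$ corresponds to $\pair{1_{mD},1_{nE}}$, and the canonical hermitian metric on Deligne's pairing (cf.\ Section \ref{section:deligne}) is characterised on $S$ by the Green's function identity
\[
\log\|\sigma(p)\|^2 \;=\; 2\,g_{mD_p}[nE_p] \;=\; 2mn\cdot g_{D_p}[E_p]\,.
\]
Combining this with Theorem \ref{main} yields the asymptotic
\[
\log\|\sigma(p)\|^2 \;\sim\; 2mn\cdot\pair{D,E}_{\mathrm{a},s}\log|t(p)| \;=\; 2\,\ord_s(\sigma)\log|t(p)|\qquad\text{as }p\to s,
\]
with bounded continuous difference extending over~$s$.

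Writing $\sigma = t^k\tau$ in a neighbourhood of $s$, with $k=\ord_s(\sigma)$ and $\tau$ a non-vanishing holomorphic local section of $\mathcal{L}$, the identity $\log\|\tau(p)\|^2 = \log\|\sigma(p)\|^2 - 2k\log|t(p)|$ combined with the preceding displayed asymptotic shows that $\log\|\tau(p)\|^2$ extends as a bounded continuous function across $s$; equivalently, the transported metric on $\mathcal{L}$ is continuous at $s$, as required. The bulk of the analytic work is already absorbed into Theorem \ref{main}; what remains is essentially a bookkeeping dictionary, exploiting the fact that the local intersection number $\pair{D,E}_{\mathrm{a},s}$ appears both as the algebraic order of vanishing of the symbol section of Deligne's pairing, and as the coefficient of $\log|t|$ in the Green's-function asymptotic, with matching constants. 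The role of $m$ and $n$ is to clear denominators in the a priori $\qq$-vertical correction $\phi(D)$, so that the Deligne pairing and the intersection pairing on $\overline{X}$ can be applied directly to integer divisors; the main possible pitfall is to identify the sign and multiplicative constant relating $\log\|\sigma\|^2$ and $g_{D_p}[E_p]$ correctly in the chosen normalisations, as an error there would shift the boundary extension to a different integral model.
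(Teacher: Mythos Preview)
Your reduction is logically circular in the paper's framework: Theorem~\ref{main} is \emph{derived from} Theorem~\ref{cor}, not the other way around (see the introduction and Section~\ref{section:main}). The paper's proof of Theorem~\ref{cor} is independent of Theorem~\ref{main}. It rests on Theorem~\ref{learprop6.1}, which shows directly that the canonical metric on the Poincar\'e bundle $\bb_\lambda$ on $\jj\times_S\jj$ extends continuously to its canonical extension $\overline{\bb}_\lambda$ over the identity component of the N\'eron model, and then on Propositions~\ref{pairingbiext} and~\ref{pairingbiextbar}, which identify the Deligne pairing $\pair{mD+\phi(mD),nE+\phi(nE)}$ on $\overline{S}$ with the pullback $(\bar{\mu}_1,\bar{\mu}_2)^*\overline{\bb}_\lambda$ along the normal-function section of $\overline{\jj}\times_{\overline{S}}\overline{\jj}$ determined by $mD+\phi(mD)$ and $nE+\phi(nE)$. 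The analytic content, replacing your appeal to Theorem~\ref{main}, is an explicit estimate for the theta-function metric on $\bb_\lambda$ in terms of the period map $j(t)=\tfrac{A}{2\pi i}\log t + B(t)$, showing that $(\Im j(t))^{-1}$ extends continuously over the puncture.

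Your bookkeeping --- matching $\ord_s\pair{1_{D'},1_{E'}}$ with $mn\cdot\pair{D,E}_{\mathrm{a},s}$ and invoking the Green's-function identity $\log\|\sigma\|=g_{mD_p}[nE_p]$ --- is correct, and is in fact precisely the content of the paper's derivation of Theorem~\ref{main} from Theorem~\ref{cor} via Lemma~\ref{asympt}, run backwards. So the two statements are indeed equivalent modulo that elementary lemma, but this equivalence does not furnish a proof of either. If you had an \emph{independent} proof of Theorem~\ref{main} (for instance directly from the Hodge-theoretic results of Hain, Lear, or Pearlstein cited in the introduction), your argument would give a legitimate alternative route to Theorem~\ref{cor}; as written, however, it assumes what is to be proved. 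One minor further point: Theorem~\ref{cor} does not assume that $D$ and $E$ have generically disjoint support, whereas Theorem~\ref{main} does, so even granting Theorem~\ref{main} you would still need an additional step (e.g.\ bi-additivity of the pairing together with a moving argument) to cover the general case.
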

As an application of this result, we prove a special case of a conjecture concerning `height jumping' formulated recently \cite[Section~14]{hainnormal} by R. Hain. Let $\mathbb{U}$ denote a variation of polarized Hodge structure of weight~$-1$ over a smooth connected complex quasi-projective variety $Y$. Let $\mathcal{J}(\mathbb{U}) \to Y$ denote the corresponding intermediate jacobian fibration over $Y$. Denote by $\check{\mathbb{U}}$ the variation of Hodge structure dual to $\mathbb{U}$. Then as explained in e.g. \cite[Section~3]{hainbiext} or \cite[Section~6]{hainnormal} the torus fibration $\jj(\uu) \times_Y \jj(\check{\uu})$ over $Y$ carries a natural Poincar\'e (biextension) line bundle $\bb$, which comes equipped with a canonical $C^\infty$-hermitian metric. This metric has, among others, the following properties: its first Chern form is translation-invariant in all fibers of $\jj(\uu) \times_Y \jj(\check{\uu})$ over $Y$, and its pullback along the zero-section is trivial.

The polarization of $\uu$ gives rise to an isogeny $\lambda \colon \jj(\uu) \to \jj(\check{\uu})$ over $Y$; we denote by $\hat{\bb}_\lambda$ the pullback of the line bundle $\bb$ along the map $\jj(\uu) \to \jj(\uu) \times_Y \jj(\check{\uu})$ over $Y$ given by $(\mathrm{id},\lambda)$. Note that by pullback $\hat{\bb}_\lambda$ becomes equipped with a canonical $C^\infty$-hermitian metric. Let $\nu \colon Y \to \mathcal{J}(\mathbb{U})$ be a normal function section, and consider the $C^\infty$-hermitian line bundle $\mathcal{L}=\nu^*{\hat{\bb}_\lambda}$ on $Y$. A natural question is whether $\mathcal{L}$, or at least some tensor power of $\mathcal{L}$, can be extended as a continuous hermitian line bundle over a compactification of $Y$. A positive answer to this question has been given by D. Lear \cite{lear}, see \cite[Corollary 6.2]{hainnormal} for the formulation below.
\begin{thm} (D. Lear)  Let $\overline{Y}$ be a smooth compactification of $Y$ such that $\Delta=\overline{Y}-Y$ is a normal crossings divisor. Then there exists a positive integer $n$ such that the hermitian line bundle $\mathcal{L}^{\otimes n}$ extends as a line bundle with continuous hermitian metric over $\overline{Y} \setminus \Delta^{\mathrm{sing}}$. In particular, as $\codim_{\overline{Y}}\Delta^{\mathrm{sing}} \geq 2$, the underlying line bundle $\mathcal{L}^{\otimes n}$ has a canonical extension as a line bundle over $\overline{Y}$.
\end{thm}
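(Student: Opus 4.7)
The plan is to reduce the question to a one-parameter local analysis near the smooth locus of $\Delta$ and then to extract the asymptotic behaviour of the biextension metric from the admissible variation theory. Since $\codim_{\overline{Y}}\Delta^{\mathrm{sing}} \geq 2$, it suffices to construct a continuous hermitian extension of a tensor power of $\mathcal{L}$ in a neighbourhood of each point $s \in \Delta \setminus \Delta^{\mathrm{sing}}$; the underlying line bundle then extends uniquely over all of $\overline{Y}$ by the classical extension of line bundles across codimension-two analytic subsets.

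First I fix $s \in \Delta \setminus \Delta^{\mathrm{sing}}$ and choose local analytic coordinates $(t, z_1, \ldots, z_{d-1})$ on $\overline{Y}$ centred at $s$ with $\Delta = \{t = 0\}$, so that restriction to a transverse arc yields a polarized variation of weight $-1$ over a punctured disc together with a normal function section. After passing to a finite base change totally ramified over $\{t=0\}$, which only affects the final integer $n$, I may assume that the local monodromy of $\uu$ around $s$ is unipotent. The data $(\uu, \nu, \lambda)$ then assemble into an admissible biextension variation of mixed Hodge structure $\vv$ in the sense of Hain and Pearlstein, whose extension data encode the hermitian line bundle $\nu^* \hat{\bb}_\lambda$ together with its biextension metric.

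The heart of the argument is the asymptotic analysis of this biextension metric near $s$. Combining Schmid's nilpotent orbit theorem for the pure pieces with Pearlstein's SL(2)-orbit theorem for admissible mixed Hodge structures, one obtains, for any local trivialising section $\sigma$ of $\hat{\bb}_\lambda$ in a neighbourhood of $s$, an expansion
\[ -\log \|\nu^* \sigma\|^2 = a_s \log |t|^2 + F(t), \]
where $F$ extends as a bounded continuous function on a disc around $s$ and $a_s \in \qq$ is a quadratic expression in the logarithm of the unipotent local monodromy acting on the weight-graded pieces of $\vv$. The rationality of $a_s$ is a consequence of the $\qq$-structure on the local monodromy operator and on the polarization.

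To conclude, observe that $a_s$ is locally constant along each connected component of $\Delta \setminus \Delta^{\mathrm{sing}}$, so that only finitely many values occur. Choosing $n$ to clear all denominators, the local sections $t^{-na_s}\sigma^{\otimes n}$ of $\mathcal{L}^{\otimes n}$ have hermitian norm $\exp(-n F)$, which is bounded and continuous; hence $\mathcal{L}^{\otimes n}$ extends as a line bundle with continuous hermitian metric across $s$, and these local extensions glue by uniqueness of the hermitian data away from the boundary. The principal obstacle is the asymptotic formula above: establishing it with rational $a_s$ and continuously bounded remainder $F$ requires the full strength of the mixed SL(2)-orbit theorem and the admissibility of the biextension variation produced from $\nu$ and $\lambda\circ\nu$.
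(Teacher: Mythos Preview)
The paper does not actually prove this theorem. It is stated in Section~2 as a result of D.~Lear, with a citation to Lear's unpublished thesis and to Hain's reformulation \cite[Corollary~6.2]{hainnormal}; no proof is given. So there is no ``paper's own proof'' to compare against.

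What the paper \emph{does} prove is a closely related special case, Theorem~\ref{learprop6.1}: when $Y$ is a curve and the monodromy at each boundary point is unipotent, the canonical metric on the Poincar\'e bundle $\bb_\lambda$ extends continuously over the rigidified extension $\overline{\bb}_\lambda$ on the N\'eron model. That proof is entirely explicit and avoids the general mixed Hodge machinery you invoke: it writes down the period map $j(t)=\frac{A}{2\pi i}\log t + B(t)$, expands the theta function as a series in $t$, and checks by hand that the exponential factor $\exp(-2\pi\,{}^t(\Im z)(\Im j(t))^{-1}(\Im w))$ in the norm extends continuously, via a direct analysis of the matrix $(\Im j(t))^{-1}$ using the block structure determined by the kernel of $A$.

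Your sketch, by contrast, is the ``high road'': it is essentially the strategy of Pearlstein \cite{pearl} and Hain \cite{hainbiext,hainnormal}, packaging $(\uu,\nu,\lambda)$ into an admissible biextension variation and extracting the asymptotic $-\log\|\nu^*\sigma\|^2 = a_s\log|t|^2 + F(t)$ from the nilpotent and $\mathrm{SL}_2$-orbit theorems. This is correct in outline and is indeed the route to the general statement, but you should be aware that the hard content---rationality of $a_s$ and continuity (not merely boundedness) of the remainder $F$---is exactly the substance of those orbit theorems, so your argument is really a reduction to \cite{pearl} rather than an independent proof. The paper's explicit theta-function computation buys a self-contained argument in the one-dimensional principally polarized case, at the cost of generality; your approach buys the full statement over higher-dimensional bases, at the cost of importing heavy machinery.
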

For the sake of exposition we will usually assume that $\mathcal{L}$ itself extends (otherwise replace $\mathcal{L}$ by a high enough tensor power), and we will denote the resulting `Lear extension' over $\overline{Y}$ by $\mathcal{L}_{\overline{Y}}$. Now let $S$ denote a smooth connected complex algebraic curve, and let $f \colon S \to Y$ be a morphism. The data $\jj(\uu)$, $\hat{\bb}_\lambda$, $\nu$ and hence $\mathcal{L}$ pull back to $S$ along $f$. Let $\overline{S}$ denote a smooth compactification of $S$, and let $\overline{f} \colon \overline{S} \to \overline{Y}$ extend $f$.

As R. Hain notes in \cite[Section 14]{hainnormal}, perhaps surprisingly the formation of the Lear extension is not always compatible with pullback along $f$. More precisely, let $\overline{f}^* \mathcal{L}_{\overline{Y}}$ be the pullback of the Lear extension of $\mathcal{L}$ over $\overline{Y}$ along $\overline{f}$, and let $(f^*\mathcal{L})_{\overline{S}}$ be the Lear extension of $f^*\mathcal{L}$ over $\overline{S}$. The `difference' $\overline{f}^* \mathcal{L}_{\overline{Y}}\otimes (f^*\mathcal{L})^{\otimes -1}_{\overline{S}}$ is canonically trivial over $S$, and hence can be written, up to isomorphism, as $\mathcal{O}_{\overline{S}}(J)$ for a canonical divisor $J$ - called the \emph{jumping divisor} - on $\overline{S}$ supported on the boundary divisor $\overline{S} \setminus S$. Note that by the uniqueness of the Lear extension the height jumping divisor is trivial over $\overline{f}^*(\overline{Y} \setminus \Delta^{\mathrm{sing}})$. Thus, any non-trivial height jumping is `caused' by the singularities of the boundary divisor in $\overline{Y}$.

Let $\mm_{g,n}$ denote the moduli orbifold of smooth proper connected $n$-pointed complex curves of genus $g \geq 1$. Let $\uu$ be the natural polarised variation of Hodge structure on $\mm_{g,n}$ whose fiber at $[C,(x_1,\ldots,x_n)]$ is $\mathrm{H}_1(C,\zz)$. Note that $\jj(\uu)$ is then the pullback to $\mm_{g,n}$ of the universal jacobian over $\mm_g$.
In \cite[Section~14]{hainnormal} Hain conjectures that the height jumping divisor should be effective in the following two cases: (a) the inclusion $Y \subset \overline{Y}$ is the Deligne-Mumford compactification $\mm_{g,1} \subset \overline{\mm}_{g,1}$ ($g \geq 2$), and the normal function on $\mm_{g,1}$ is the function $\mathcal{K} \colon \mm_{g,1} \to \jj(\mathbb{U})$ given by sending $[C,x] \in \mm_{g,1}$ to the point $[\mathrm{Jac}(C),(2g-2)[x]-K_C]\in \jj(\mathbb{U})$, where $K_C$ is the canonical divisor class on $C$; (b) the inclusion $Y \subset \overline{Y}$ is the Deligne-Mumford compactification $\mm_g \subset \overline{\mm}_g$ ($g \geq 3)$, and the normal function on $\mm_g$ is the normal function $\nu \colon \mm_g \to \jj(\wedge^3 \uu)$ associated to the Ceresa cycle $C-C^-$ in the jacobian of $[C] \in \mm_g$.

Our next result implies Hain's conjecture for case (a). Let $d=(d_1,\ldots,d_n)$ be an $n$-tuple of integers and $m$ be an integer such that $\sum_i d_i = (2g-2)m$. Denote by $F_d$ the normal function $\mm_{g,n} \to \jj(\mathbb{U})$ given by sending $[C,(x_1,\ldots,x_n)]$ to $[\mathrm{Jac}(C),\sum_i d_i[x_i] -mK_C]$.
\begin{thm} \label{hainconj} Let $g\geq2$, $n \geq 1$ be integers. Let $S$ denote a smooth connected complex curve with smooth compactification $\overline{S}$. Let $\overline{f} \colon \overline{S} \to \overline{\mm}_{g,n}$ be a morphism, and assume that the restriction $f$ of $\overline{f}$ to $S$ has image contained in $\mm_{g,n}$. Then the height jumping divisor on $\overline{S}$ with respect to $f \colon S \to \mm_{g,n}$ and the normal function $F_d$, is effective.
\end{thm}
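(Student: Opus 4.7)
The plan is to pull back the question to a semistable family of curves over $\overline{S}$, use Theorem \ref{cor} to identify the Lear extension of $f^*\mathcal{L}$ with a Deligne pairing of admissible divisors on $\overline{X}$, analyze the pulled-back universal Lear extension $\overline{f}^*\mathcal{L}_{\overline{\mm}_{g,n}}$ in terms of its slopes along the boundary divisors of $\overline{\mm}_{g,n}$, and reduce the effectivity of the height-jumping divisor to a combinatorial inequality about admissible Green's functions on reduction graphs that exploits the canonical-divisor structure of $D=\sum_i d_i\sigma_i-mK$.

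First, let $\overline{\pi}\colon\overline{X}\to\overline{S}$ be the pullback of the universal semistable $n$-pointed family along $\overline{f}$, with sections $\sigma_1,\ldots,\sigma_n$. The condition $\sum_i d_i=(2g-2)m$ makes $D=\sum_i d_i\sigma_i-mK_{\overline{X}/\overline{S}}$ of relative degree zero, and the normal function $f^*F_d$ is the Abel--Jacobi image of $D$. The canonical metric on $\hat{\bb}_\lambda$, pulled back along this Abel--Jacobi section, is standardly identified with (the inverse of) the canonical metric on the Deligne self-pairing: $f^*\mathcal{L}^{\otimes N}\cong\langle D,D\rangle^{\otimes -N}$ as $C^{\infty}$-hermitian line bundles on $S$, for some positive integer $N$. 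Applying Theorem \ref{cor} (after enlarging $N$ to clear the denominators in $\phi(D)$), the Lear extension of $f^*\mathcal{L}^{\otimes N}$ over $\overline{S}$ is $\langle D+\phi(D),D+\phi(D)\rangle^{\otimes -N}$, and by Theorem \ref{main} its order at each boundary point $s$ is, up to a fixed sign convention, $-N\,\pair{D,D}_{\mathrm{a},s}$.

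Second, compute $\overline{f}^*\mathcal{L}_{\overline{\mm}_{g,n}}$. Because $\mathcal{L}_{\overline{\mm}_{g,n}}$ extends continuously across each smooth boundary divisor $\Delta_e\subset\overline{\mm}_{g,n}$, its slope $\mu_e$ along $\Delta_e$ is determined by Theorem \ref{main} applied to any test curve transverse to $\Delta_e$ at a point of $\Delta_e\setminus\Delta^{\mathrm{sing}}$; this gives $\mu_e$ equal, up to the same sign, to $-\pair{D,D}_{\mathrm{a}}^{(e)}$, the admissible self-pairing of $D$ in the one-parameter family smoothing only the node $e$. Let $e_1,\ldots,e_r$ be the edges of the reduction graph $\Gamma_s$ of $\overline{X}_s$ and $k_j$ the thickness at $s$ of the corresponding node (which equals the intersection multiplicity of $\overline{f}(\overline{S})$ with $\Delta_{e_j}$ at $\overline{f}(s)$). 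Combining with the first step, the order at $s$ of the height-jumping divisor becomes
\[\ord_s J \; = \; \pair{D,D}_{\mathrm{a},s} \; - \; \sum_{j=1}^{r} k_j\,\pair{D,D}_{\mathrm{a}}^{(e_j)}.\]

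Finally, translate effectivity into a combinatorial statement on reduction graphs. By Zhang \cite{zh}, both terms on the right are admissible Green's function energies on metric reduction graphs, applied to the induced divisor $D_{\Gamma_s}=\sum_i d_i[\sigma_i(s)]-mK_{\Gamma_s}$, where $K_{\Gamma_s}$ is the canonical weighted divisor of $\Gamma_s$; the first term uses the full metric graph $(\Gamma_s,\{k_e\})$, while the $\pair{D,D}_{\mathrm{a}}^{(e_j)}$ use the one-edge graphs obtained by contracting all edges of $\Gamma_s$ except $e_j$. Effectivity of $J$ thus reduces to a subadditivity statement: for the canonical-type divisor $D_{\Gamma_s}$, the full-graph admissible self-pairing dominates the length-weighted sum of the single-edge self-pairings. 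This combinatorial inequality is the main obstacle: it fails for generic degree-zero divisors on reduction graphs, and its proof must exploit the specific interaction between the canonical-divisor contribution $-mK_{\Gamma_s}$ and Zhang's admissible measure on the graph. The plan is to write out the admissible Green's function in closed form, via effective resistances or via the variational characterization as minimiser of a quadratic energy, and then use the canonical part of $D_{\Gamma_s}$ to pass from identities on individual edges to the desired inequality on the full graph.
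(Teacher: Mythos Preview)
Your overall architecture matches the paper's proof: identify $f^*\mathcal{L}$ with $\pair{D,D}^{-1}$, use Theorem~\ref{cor} to get the Lear extension over $\overline{S}$, compute the Lear extension over $\overline{\mm}_{g,n}$ by test curves transverse to single boundary divisors, and reduce $\ord_s J\ge 0$ to a combinatorial inequality on the reduction graph $\Gamma_s$. Your displayed formula for $\ord_s J$ is the right one.

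The gap is in your assessment of the final combinatorial step. You assert that the inequality ``fails for generic degree-zero divisors on reduction graphs'' and that its proof must exploit the canonical-divisor part $-mK_{\Gamma_s}$ and Zhang's admissible measure. This is incorrect, and it sends you hunting for structure that is not needed. The inequality
\[
g_{\Gamma_s}(\dd,\dd)\;\ge\;\sum_{j} k_j\, g_{\Gamma_s^{(e_j)}}(\dd_j,\dd_j)
\]
(where $\Gamma_s^{(e_j)}$ is the graph obtained by contracting all edges except $e_j$) holds for \emph{every} degree-zero divisor $\dd$ on $\Gamma_s$. Here is the actual mechanism, which is what the paper does. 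First, if $e_j$ is not a bridge then $\Gamma_s\setminus e_j$ is connected, so contracting all other edges produces a single vertex with a loop; every degree-zero divisor on that graph is zero and the $j$-th summand on the right vanishes. Thus only bridges contribute to the right-hand side. Second, write $\Gamma_s$ as the pointed sum of its bridges and $2$-connected components; the Green's function is additive over this decomposition (Proposition~\ref{additivity}), so $g_{\Gamma_s}(\dd,\dd)=\sum_i g_{\Gamma_i}(\dd_i,\dd_i)$. Each bridge $\Gamma_i$ contributes exactly its single-edge term to both sides, while each $2$-connected component contributes $g_{\Gamma_i}(\dd_i,\dd_i)\ge 0$ to the left and $0$ to the right, by positive definiteness of the Green's function on degree-zero divisors (Proposition~\ref{positive}). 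No property of the particular divisor $D$ beyond $\deg\dd=0$ is used, and Zhang's admissible measure is irrelevant here: for degree-zero divisors the admissible pairing reduces to the ordinary combinatorial Green's function (Corollary~\ref{green_admissible}).
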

Our proof uses Theorem \ref{cor} as well as an interpretation of the non-archimedean N\'eron height pairing in terms of Green's functions and effective resistance on the reduction graph $\Gamma$ of a semistable curve. The fact that the Green's function on the set of divisors of degree zero on $\Gamma$ is positive definite (see Proposition~\ref{positive} below) will be tantamount to the required effectivity of the height jumping divisor.

We note that a result of Hain \cite[Theorem 12.3]{torelli} implies that for $g\geq 3$ and $n \geq 1$, each normal function section of $\jj(\uu) \to \mm_{g,n}$ is of the form $F_d$ for suitable $(d_1,\ldots,d_n)$ and $m$.

The organisation of this paper is as follows. In Sections \ref{section:poincare} and \ref{section:deligne} we state some preliminaries regarding Poincar\'e line bundles on families of principally polarized abelian varieties, and Deligne pairings. In Section \ref{section:cor} we prove Theorem \ref{cor}, and Theorem \ref{main} is derived from Theorem \ref{cor} in Section \ref{section:main}. Section \ref{section:graph} discusses the connection between the local non-archimedean N\'eron height pairing and the Green's function on the reduction graph. In Section \ref{section:conj} we then prove Theorem \ref{hainconj}. Finally, as a by-product of our method, we present in Section \ref{section:calc} an alternative derivation of Hain's expression \cite[Theorem 11.5]{hainnormal} for the Lear extension of $\mathcal{L}=F_{d}^* \hat{\bb}_\lambda$ from $\mm_{g,n}$ over $\overline{\mm}_{g,n}$.

\section{Poincar\'e line bundle} \label{section:poincare}

The purpose of this section is to describe the canonical $C^\infty$-hermitian metric on the Poincar\'e line bundle on a family of principally polarized abelian varieties. Referring to L. Moret-Bailly's article \cite{mb}, we next make a connection with the canonical metric on the determinant of cohomology, in the case where the family is a family of jacobians over a curve.

Let $Y$ be a connected complex manifold and let $\pi \colon \mathcal{T} \to Y$ be a family of complex tori over $Y$. Let $\hat{\mathcal{T}} \to Y$ be the torus fibration dual to $\mathcal{T} \to Y$, and let $\bb$ be the Poincar\'e bundle on the fiber product $\mathcal{T} \times_Y \hat{\mathcal{T}}$. Note that $\bb$ carries a canonical rigidification along the zero sections. A general construction described in \cite[Section~3.2]{hainbiext} (see also \cite[Section~7]{hrar}) yields the following result.
\begin{prop} \label{metriconbiext}  The Poincar\'e bundle $\bb$ carries a canonical $C^\infty$-hermitian metric $\|\cdot \|_\bb$. The metric $\|\cdot\|_\bb$ has the following two properties: (a) the first Chern form of $(\bb,\|\cdot\|_\bb)$ is translation-invariant in each of the fibers of $\tt \times_Y \hat{\tt} \to Y$; (b) the canonical rigidification of $\bb$ along the zero section is an isometry, where $\mathcal{O}_Y$ has the standard euclidean metric.
\end{prop}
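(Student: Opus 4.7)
The strategy is to construct the metric by explicit uniformization of $\mathcal{T}$ and $\hat{\mathcal{T}}$, verify the two properties on the universal cover, and then invoke the uniqueness forced by (a) and (b) to pass from the local construction to a well-defined global metric. The existence and uniqueness will in fact be proved simultaneously.

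Concretely, after passing to a sufficiently small open $U \subseteq Y$, one may write $\mathcal{T}|_U = \mathcal{V}/\Lambda$, where $\mathcal{V} \to U$ is a holomorphic vector bundle whose rank equals the fiber dimension and $\Lambda \subset \mathcal{V}$ is a local system of full-rank lattices. The dual torus fibration admits the parallel description $\hat{\mathcal{T}}|_U = \overline{\mathcal{V}}^{\vee}/\Lambda^{\vee}$, with $\Lambda^{\vee}$ the dual lattice inside the antiholomorphic dual bundle, characterised by $\mathrm{Im}\,\ell(\lambda) \in \mathbb{Z}$ for all $\lambda \in \Lambda$. The Poincar\'e bundle $\mathcal{B}$ can then be realised as the quotient of the trivial line bundle on $\mathcal{V} \times_U \overline{\mathcal{V}}^{\vee}$ by an action of $\Lambda \times \Lambda^{\vee}$ through an automorphy cocycle of the shape $(v,\ell,z) \mapsto (v+\lambda,\, \ell+\mu,\, \chi(\lambda,\mu)\, e^{2\pi i\,\ell(\lambda)}\, z)$, where $\chi$ is a semicharacter fixed by the requirement that the descended bundle is rigidified along both zero sections.

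The canonical metric is then defined on the universal cover by $\|z\|^2 := |z|^2 \exp(-2\pi\,\mathrm{Im}\,\ell(v))$. A direct check shows that this expression is invariant under the cocycle action above, so it descends to a $C^\infty$-hermitian metric on $\mathcal{B}|_U$. Property (a) follows immediately because $\partial\bar{\partial}\,\mathrm{Im}\,\ell(v)$ gives a translation-invariant $(1,1)$-form on each fiber $\mathcal{T}_y \times \hat{\mathcal{T}}_y$, and property (b) holds because the metric reduces to $|z|^2$ along the zero sections $v = 0$, $\ell = 0$.

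To see that the construction is canonical and that the local metrics glue to a global metric on $\mathcal{B}$, suppose $h_1, h_2$ are any two hermitian metrics on $\mathcal{B}$ satisfying (a) and (b). Their ratio $h_1/h_2$ is a positive smooth function on $\mathcal{T} \times_Y \hat{\mathcal{T}}$ whose $\partial\bar{\partial}\log$ vanishes along every fiber; by compactness of the fibers it is therefore constant along fibers, and equal to $1$ by the rigidification condition, so $h_1 = h_2$. This single uniqueness argument both pins down canonicity on a given $U$ and makes two local constructions over overlapping opens agree, giving the global metric. The main technical point is the careful bookkeeping of the semicharacter $\chi$ and the associated sign factors in the automorphy cocycle, so that the explicit metric really is invariant for the cocycle defining the Poincar\'e bundle with its canonical rigidification; once that is checked, everything else follows formally from the uniqueness argument.
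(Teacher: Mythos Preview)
The paper does not give its own proof of this proposition; it simply invokes the construction in \cite[Section~3.2]{hainbiext} and \cite[Section~7]{hrar}. Your sketch is essentially a pr\'ecis of what those references do: realise $\mathcal{B}$ via an explicit automorphy factor on the universal cover, write down the hermitian metric by hand, and appeal to the uniqueness forced by (a) and (b) to globalise. So your approach and the paper's (outsourced) approach coincide.

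Two small points deserve tightening. First, the automorphy cocycle for the Poincar\'e bundle is not quite of the shape you wrote: it must involve a factor in $\mu(v)$ as well as in $\ell(\lambda)$, since the first Chern class pairs the two factors of $\mathcal{T}\times_Y\hat{\mathcal{T}}$ non-trivially. You flag this bookkeeping as ``the main technical point'', which is fair, but as written the expression $|z|^2\exp(-2\pi\,\mathrm{Im}\,\ell(v))$ will not be invariant under the full $\Lambda\times\Lambda^\vee$ action. Second, in the uniqueness step you assert that $\partial\bar\partial\log(h_1/h_2)$ vanishes fiberwise; property (a) only gives that this $(1,1)$-form is \emph{translation-invariant} on each fiber. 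One needs the additional observation that a translation-invariant form on a compact torus which is $d$-exact (as any $\partial\bar\partial$ of a global smooth function is) must vanish. With these two points filled in your argument is complete; the paper itself supplies nothing beyond the citation, although it subsequently makes the metric explicit in the principally polarised case via theta functions.
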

We would like to make the canonical metric explicit in the case where the family $\tt \to Y$ is a family of principally polarized abelian varieties. It will suffice to consider the case where $Y=\aa_g$, the moduli orbifold of principally polarized abelian varieties, and $\tt \to Y$ is the universal family. In fact we will work with $Y=\hh_g$, the Siegel upper half space of degree $g$, with $\tt \to Y$ the universal abelian variety of dimension $g$, and show that the resulting expression for the canonical metric is $\mathrm{Sp}(2g,\zz)$-invariant. We view $\hh_g$ as the set of complex symmetric $g$-by-$g$ matrices with positive definite imaginary part. Then $\tt$ can be written as the quotient $(\zz^g \times \zz^g) \setminus (\cc^g \times \hh_g)$, where the action of $\zz^g \times \zz^g$ on $\cc^g \times \hh_g$ is defined by
\[ (m,n) \cdot (z;\tau) = (z+m+\tau n;\tau) \, . \]
Let $\Theta$ be the divisor on $\tt$ given by the standard Riemann theta function
\begin{equation} \label{Riemanntheta}
 \theta(z;\tau) = \sum_{n \in \zz^g} \exp(\pi i{}^t n \tau n + 2\pi i {}^t n z) \end{equation}
on the uniformization $\cc^g \times \hh_g$ of $\tt$. The line bundle $\oo(\Theta)$ has a natural $C^\infty$-hermitian metric $\|\cdot\|$ which is given as follows. Put $\|\theta\|=\|1\|_{\oo(\Theta)}$. Then
\begin{equation} \label{thetanorm} \|\theta\|(z;\tau) = (\det \Im \tau)^{1/4} \exp(-\pi {}^t \Im z (\Im \tau)^{-1} \Im z) |\theta(z;\tau)| \, .
\end{equation}
It can be verified that the expression on the right is invariant under the action of $(\zz^g \times \zz^g) \rtimes \mathrm{Sp}(2g,\zz)$.

The restriction of $\oo(\Theta)$ to a fiber of $\tt \to \hh_g$ is a symmetric ample line bundle that yields the canonical principal polarization of that fiber. The principal polarization induces an isomorphism $\lambda \colon \tt \to \hat{\tt}$; we write $\bb_\lambda$ for the pullback line bundle $(\mathrm{id},\lambda)^*\bb$ on $\tt \times_Y \tt$. By pulling back the metric on $\bb$ along $(\mathrm{id},\lambda)$ the line bundle $\bb_\lambda$ becomes equipped with a canonical $C^\infty$-hermitian metric $\|\cdot\|_{\bb_\lambda}$. This metric can be made explicit in the following way.

First, the analytic manifold $\hh_g$ carries a natural non-trivial $(1,1)$-form $\kappa$, the K\"ahler form of the canonical $\mathrm{Sp}(2g,\rr)$-invariant metric induced by the structure of $\hh_g$ as the locally symmetric variety $\mathrm{Sp}(2g,\rr)/U(g)$. From \cite[Chapter III]{si} we deduce that $\kappa$ is given by
\[ \kappa = \frac{i}{16 \pi} \mathrm{Tr}( (\Im \tau)^{-1} \cdot d\tau \cdot (\Im \tau)^{-1} \cdot d\overline{\tau} ) \, . \]
Analogously $\cc^g \times \hh_g$ has a natural  $(1,1)$-form $\mu$ given by
\[ \mu = \frac{i}{2} {}^t (dz -d\tau \cdot (\Im \tau)^{-1} \cdot \Im z) \cdot (\Im \tau)^{-1} \cdot (d \overline{z} - d \overline{\tau} \cdot (\Im \tau)^{-1} \cdot \Im z) \, , \]
which is $\zz^g \times \zz^g$-invariant and hence descends to a $(1,1)$-form on $\tt$. One has that $\mu$ is translation-invariant in each fiber, and $\mu$ vanishes along the zero section $e \colon Y \to \tt$. A computation yields that
\[ c_1 (\oo(\Theta),\|\cdot\|) = \mu + \pi^* \kappa \, . \]
In particular, the first Chern form of $(\oo(\Theta),\|\cdot\|)$ is translation-invariant in each fiber of $\tt \to \hh_g$.

Let $m \colon \tt \times_Y \tt \to \tt$ be the addition morphism, and $p_1,p_2 \colon \tt \times_Y \tt \to \tt$ the projections on the first and second factor, respectively. Denote by $\Lambda(\Theta)$ the line bundle
\[ m^* \oo(\Theta) \otimes p_1^* \oo(\Theta)^{\otimes -1} \otimes p_2^* \oo(\Theta)^{\otimes -1} \otimes e^* \oo(\Theta) \]
on $\tt \times_Y \tt$. We note that $\Lambda(\Theta)$ has a canonical rigidification along the zero section $(e,e) \colon Y \to \tt \times_Y \tt$. Also we note that $\Lambda(\Theta)$ has a natural $C^\infty$-hermitian metric induced from $(\oo(\Theta),\|\cdot\|)$. It follows directly from the properties mentioned above that the first Chern form of $\Lambda(\Theta)$ with its natural metric is translation-invariant in each fiber, and that the canonical rigidification of $\Lambda(\Theta)$ along $(e,e)$ is an isometry.
\begin{prop} \label{PoincareLambda} There exists a canonical isomorphism
\[ \psi \colon \bb_\lambda \isom \Lambda(\Theta)  \]
of line bundles on $\tt \times_Y \tt$ compatible with the canonical rigidifications along the zero sections on both sides.
\end{prop}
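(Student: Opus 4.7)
The proof proceeds via the universal property of the Poincar\'e bundle: $\bb$ on $\tt \times_Y \hat{\tt}$ is characterized, up to unique isomorphism, as the rigidified line bundle whose restriction to $\tt_y \times \{\xi\}$ represents the point $\xi \in \hat{\tt}_y$. Thus, to produce a canonical isomorphism $\bb_\lambda \isom \Lambda(\Theta)$, it suffices to present $\Lambda(\Theta)$ as a rigidified line bundle on $\tt \times_Y \tt$ whose fiberwise restrictions are classified by the morphism $\lambda$ induced by $\oo(\Theta)$.

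The first step is to exhibit a canonical rigidification of $\Lambda(\Theta)$ along the partial zero section $e \times \mathrm{id} \colon \tt \to \tt \times_Y \tt$. From the identities $m \circ (e \times \mathrm{id}) = \mathrm{id}_\tt$, $p_1 \circ (e \times \mathrm{id}) = e \circ \pi$, $p_2 \circ (e \times \mathrm{id}) = \mathrm{id}_\tt$, one computes directly that the four tensor factors in the definition of $\Lambda(\Theta)$ collapse to
\[ \oo(\Theta) \otimes \pi^* e^* \oo(\Theta)^{\otimes -1} \otimes \oo(\Theta)^{\otimes -1} \otimes \pi^* e^* \oo(\Theta) \;=\; \oo_\tt \, , \]
yielding the desired trivialization. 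A symmetric argument gives the rigidification along $\mathrm{id} \times e$, and together these reduce in particular to the canonical trivialization along the double zero section $(e, e)$.

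The second step is the fiber computation. For $y \in Y$ and $x \in \tt_y$, the restriction of $\Lambda(\Theta)$ along $p_2$ to $\tt_y \times \{x\}$ gives
\[ \Lambda(\Theta)|_{\tt_y \times \{x\}} \;\cong\; t_x^* \oo(\Theta)_y \otimes \oo(\Theta)_y^{\otimes -1} \, , \]
where the ``constant line'' contributions from $p_2^* \oo(\Theta)^{\otimes -1}$ and $e^* \oo(\Theta)$ are absorbed into the rigidification at $x$. By definition of the polarization $\lambda \colon \tt \to \hat{\tt}$ associated to the symmetric ample bundle $\oo(\Theta)$, this degree-zero line bundle on $\tt_y$ represents precisely the point $\lambda(x) \in \hat{\tt}_y$. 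The universal property of $\bb$ now yields a unique morphism $f \colon \tt \to \hat{\tt}$ over $Y$ and a unique isomorphism $(\mathrm{id}, f)^* \bb \isom \Lambda(\Theta)$ respecting rigidifications; the fiber computation forces $f = \lambda$, producing the canonical isomorphism $\psi$.

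The main point requiring care is the bookkeeping of rigidifications: one must verify that the ``constant'' lines pulled back from the base via the $e^* \oo(\Theta)$ and $p_i^* \oo(\Theta)^{\otimes -1}$ factors are correctly cancelled by the canonical trivialization of step one, so that the fiberwise isomorphism in step two is genuinely canonical rather than determined only up to a scalar. This is where the symmetry of $\oo(\Theta)$ and the biextension structure on $\Lambda(\Theta)$ are essential; with this matching in place, the proposition becomes the familiar identification of $\Lambda(L)$ with the pullback of the Poincar\'e bundle along the polarization morphism attached to a symmetric ample line bundle $L$.
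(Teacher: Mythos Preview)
Your argument is correct and follows essentially the same approach as the paper: both use the definition of $\lambda$ as $x \mapsto [t_x^*\oo(\Theta) \otimes \oo(\Theta)^{-1}]$ together with the universal property of the Poincar\'e bundle to identify $\bb_\lambda$ with $\Lambda(\Theta)$. The paper phrases this as a chain of canonical fiberwise identifications $\bb_\lambda|_{(x,y)} = \Lambda(\Theta)|_{(x,y)}$, whereas you invoke the universal property more globally (exhibiting the rigidification along $e \times \mathrm{id}$ first, then matching the classifying map to $\lambda$); but the content is the same.
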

\begin{proof} Let $t \in Y$ be a point. According to \cite[Proposition 2.4.1]{bl} the points of the dual torus $\hat{\tt}_t$ parametrize the isomorphism classes of topologically trival line bundles on $\tt_t$. Write $\mathcal{L}=\oo(\Theta)$. The principal polarization $\lambda$ sends a point $y \in \tt_t$ to the class of the line bundle $T_y^*\mathcal{L}_t \otimes \mathcal{L}^{-1}_t$ in $\hat{\tt}_t$. Here $T_y$ denotes translation along $y$. The Poincar\'e bundle $\bb$ is the universal line bundle on $\tt \times_Y \hat{\tt}$, rigidified along the zero section \cite[Proposition 2.5.2]{bl}, and this gives us, for points $x,y \in \tt_t$, canonical identifications of fibers
\[ \begin{split} \bb_\lambda|_{(x,y)} & = \bb|_{(x,[T_y^*\mathcal{L} \otimes \mathcal{L}^{-1}])} \\ & = (T_y^*\mathcal{L} \otimes \mathcal{L}^{-1})|_x \otimes (T_y^*\mathcal{L} \otimes \mathcal{L}^{-1} )^{-1}|_e \\
& = \mathcal{L}|_{x+y} \otimes \mathcal{L}^{-1}|_x \otimes \mathcal{L}^{-1}|_y \otimes \mathcal{L}|_e \\
& = \Lambda(\Theta)|_{(x,y)} \, ,
\end{split} \]
which proves the proposition.
\end{proof}
By pullback along $\psi$ one thus obtains a $C^\infty$-hermitian metric on the line bundle $\bb_\lambda$ which has translation-invariant first Chern form in each fiber, and for which the canonical rigidification along the zero section is an isometry. As these two properties uniquely characterize a $C^\infty$-hermitian metric on $\bb_\lambda$ on $\tt \times_Y \tt$ by \cite[Corollary 5.4]{hrar}, we obtain that the metric thus constructed is in fact the canonical metric.

One immediately derives from (\ref{thetanorm}) the following explicit formula.
\begin{prop} \label{explicitnorm} Denote by $\eta$ the meromorphic section
\[ m^* \theta \otimes p_1^* \theta^{\otimes -1} \otimes p_2^* \theta^{\otimes -1} \otimes e^* \theta \]
of $\Lambda(\Theta)$. Then the canonical norm of $\eta$ is given by
\[ \|\eta\|_{\Lambda(\Theta)}(z,w;\tau) = \left| \frac{\theta(z+w;\tau)\theta(0;\tau) }{\theta(z;\tau)\theta(w;\tau)} \right| \exp(-2\pi {}^t (\Im z) (\Im \tau)^{-1} (\Im w) ) \]
for each $(z,w;\tau)$ in $\cc^g \times \cc^g \times \hh_g$.
\end{prop}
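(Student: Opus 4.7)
The plan is to derive the formula by a direct computation, substituting the explicit expression (\ref{thetanorm}) into the definition of the metric on $\Lambda(\Theta)$. By construction $\Lambda(\Theta)$ carries the tensor-product metric induced from $(\oo(\Theta),\|\cdot\|)$ via pullback along $m$, $p_1$, $p_2$ and $e$, with the two dualised factors contributing reciprocal norms. Since $\eta = m^*\theta \otimes p_1^*\theta^{\otimes -1} \otimes p_2^*\theta^{\otimes -1} \otimes e^*\theta$, evaluating at a point $(z,w;\tau)$ yields
\[ \|\eta\|_{\Lambda(\Theta)}(z,w;\tau) \;=\; \frac{\|\theta\|(z+w;\tau)\,\|\theta\|(0;\tau)}{\|\theta\|(z;\tau)\,\|\theta\|(w;\tau)}. \]
This is the sole conceptual step; the rest is algebra.

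Substituting (\ref{thetanorm}) into each of the four factors, the prefactor $(\det \Im \tau)^{1/4}$ occurs twice in the numerator and twice in the denominator, and therefore cancels. The $|\theta(\cdot;\tau)|$ factors combine directly into the ratio $|\theta(z+w;\tau)\theta(0;\tau)/(\theta(z;\tau)\theta(w;\tau))|$ appearing in the claimed formula. Note that the factor $e^*\theta$ contributes $\|\theta\|(0;\tau)$ rather than $1$, because the section $\theta$ does not vanish at the origin; this is consistent with the rigidification of $\Lambda(\Theta)$ along $(e,e)$ being an isometry (Proposition \ref{metriconbiext}), since $\eta$ is not the canonical rigidifying section but the tensor product of thetas.

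It remains to verify that the exponential factors simplify as asserted. Collecting exponents, one is left with
\[ -\pi\bigl[\,{}^t\Im(z+w)(\Im\tau)^{-1}\Im(z+w) - {}^t\Im z\,(\Im\tau)^{-1}\Im z - {}^t\Im w\,(\Im\tau)^{-1}\Im w\,\bigr]. \]
Expanding the first quadratic form by bilinearity of $\Im$ and using that $(\Im \tau)^{-1}$ is symmetric, the diagonal terms cancel against the other two summands, leaving precisely $-2\pi\,{}^t\Im z\,(\Im\tau)^{-1}\Im w$, which matches the formula in the statement.

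I expect no real obstacle: the proof is a one-step unwinding of the metric on $\Lambda(\Theta)$ followed by the quadratic-form identity above. The only place to be attentive is bookkeeping of signs on the dualised factors $p_1^*\theta^{\otimes -1}$ and $p_2^*\theta^{\otimes -1}$, and the observation that both $(\det\Im\tau)^{1/4}$ prefactors and the $|\theta(0;\tau)|$ term must be tracked through the cancellation.
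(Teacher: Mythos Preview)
Your proposal is correct and matches the paper's approach exactly: the paper simply states that the formula ``immediately derives from (\ref{thetanorm})'' and gives no further argument, and what you have written is precisely that derivation spelled out. The only minor quibble is the reference to Proposition~\ref{metriconbiext} for the rigidification of $\Lambda(\Theta)$ being an isometry; that proposition concerns $\bb$, while the corresponding property for $\Lambda(\Theta)$ is stated in the discussion following Proposition~\ref{PoincareLambda}.
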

Assume for the moment that $Y$ is a connected Riemann surface, and let $\overline{Y}$ be the smooth compactification of $Y$. Let $\overline{\tt} \to \overline{Y}$ be the identity component of the N\'eron model of $\tt \to Y$.
\begin{prop} \label{existBbar} There exists a unique (up to isomorphism) rigidified line bundle $\overline{\bb}_\lambda$ on $\overline{\tt} \times_{\overline{Y}} \overline{\tt}$ extending the rigidified line bundle $\bb_\lambda$ on $\tt \times_Y \tt$.
\end{prop}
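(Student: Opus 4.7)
My plan is to prove existence and uniqueness separately, relying on the fact that the identity component of the Néron model $\overline{\tt} \to \overline{Y}$ is smooth with geometrically irreducible fibers (each fiber of $\overline{\tt}$ being a connected smooth commutative algebraic group, hence irreducible). Together with smoothness of $\overline{Y}$, this makes $\overline{\tt} \times_{\overline{Y}} \overline{\tt}$ smooth, and the complement of $\tt \times_Y \tt$ in it is $\bigcup_{s \in \overline{Y} \setminus Y} F_s$, a disjoint union of irreducible divisors $F_s = \pi^{-1}(s)$, where $\pi$ denotes the structure map to $\overline{Y}$.

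For uniqueness, I would take two rigidified extensions $\overline{\bb}_1, \overline{\bb}_2$ and consider $\mm := \overline{\bb}_1 \otimes \overline{\bb}_2^{-1}$, which is rigidified along $(e,e)$ and canonically trivialized on $\tt \times_Y \tt$. Since any Weil divisor class on $\overline{\tt} \times_{\overline{Y}} \overline{\tt}$ that is trivial on $\tt \times_Y \tt$ is a $\zz$-combination of the $F_s = \pi^* [s]$, one has $\mm \cong \pi^* \mathcal{N}$ for some line bundle $\mathcal{N} = \oo_{\overline{Y}}(\sum n_s [s])$. Pulling back along $(e,e)$ (which is a section of $\pi$) identifies $\mathcal{N}$ with $(e,e)^* \mm$, and the rigidification forces $\mathcal{N} \cong \oo_{\overline{Y}}$; hence $\mm \cong \pi^* \oo_{\overline{Y}} \cong \oo$ as rigidified line bundles. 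Tracking the trivializations shows the resulting isomorphism on $\overline{\tt} \times_{\overline{Y}} \overline{\tt}$ restricts to the canonical one on $\tt \times_Y \tt$, giving $\overline{\bb}_1 \cong \overline{\bb}_2$.

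For existence, I would first extend $\bb_\lambda$ as an unrigidified line bundle. Since $\overline{\tt} \times_{\overline{Y}} \overline{\tt}$ is smooth and algebraic, every line bundle is represented by a Weil divisor class; taking a Cartier representative $D$ of $\bb_\lambda$ and its Zariski closure $\overline{D}$ gives an extension $\overline{\bb}' = \oo(\overline{D})$. To correct the rigidification, set $\mathcal{N}' := (e,e)^* \overline{\bb}'$; its restriction to $Y$ is canonically trivialized, so $\mathcal{N}' \cong \oo_{\overline{Y}}(D')$ for a divisor $D'$ supported in the boundary $\overline{Y} \setminus Y$. Replacing $\overline{\bb}'$ by $\overline{\bb}_\lambda := \overline{\bb}' \otimes \pi^* \oo_{\overline{Y}}(-D')$ (which does not affect the restriction to $\tt \times_Y \tt$) one obtains $(e,e)^* \overline{\bb}_\lambda \cong \oo_{\overline{Y}}$, yielding a rigidification that extends the given one on $Y$.

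The main conceptual obstacle is ensuring that the boundary fibers of $\overline{\tt}$ are irreducible, which is exactly what the restriction to the identity component buys; without it one would need to worry about line bundles supported on non-identity components, and the classification ``$\mm \cong \pi^* \mathcal{N}$'' above would fail. Once this point is in place, the rest is bookkeeping with divisor classes, pullbacks by the zero section, and the observation that $\pi \circ (e,e) = \mathrm{id}_{\overline{Y}}$. A more explicit construction, via the identification $\bb_\lambda \cong \Lambda(\Theta)$ from Proposition \ref{PoincareLambda} and the extension of $\oo(\Theta)$ to $\overline{\tt}$ due to Moret-Bailly \cite{mb}, would produce the extension concretely, but is unnecessary for the abstract existence/uniqueness statement.
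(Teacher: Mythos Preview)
Your argument is correct. The paper's own ``proof'' is simply a citation to \cite[Proposition~2.8.2]{mb}, so your direct line of reasoning is genuinely different and more self-contained. The key structural input you isolate --- that $\overline{\tt} \to \overline{Y}$ is smooth with geometrically irreducible fibers, so that the boundary of $\overline{\tt} \times_{\overline{Y}} \overline{\tt}$ consists of irreducible divisors pulled back from $\overline{Y}$ --- is exactly what makes the divisor-class bookkeeping go through, and your use of the section $(e,e)$ to read off the twisting line bundle on $\overline{Y}$ is clean. This argument is elementary and applies to any smooth commutative group scheme with connected fibers over a smooth curve; it does not use anything special about N\'eron models or abelian varieties beyond that. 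Moret-Bailly's treatment in \cite{mb}, by contrast, is situated in the formalism of cube structures and biextensions, which yields additional compatibilities (e.g., with the group law) at the cost of heavier machinery. Your closing remark about constructing the extension concretely via $\Lambda(\Theta)$ is apt: this is essentially the route the paper takes later in the proof of Theorem~\ref{learprop6.1}, where the extended theta divisor $\overline{\Theta}$ is built explicitly and $\overline{\Lambda}$ is identified with $\overline{\bb}_\lambda$ via its rigidification.
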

\begin{proof} See \cite[Proposition 2.8.2]{mb}.
\end{proof}
Assume that $Y$ is a smooth complex quasi-projective variety and let $p \colon Z \to Y$ be a smooth proper morphism with connected fibers of dimension one. Let $\jj \to Y$ be the jacobian fibration associated to the family of curves $Z \to Y$. Denote by $\check{\jj} \to Y$ the corresponding family of dual varieties. We have by \cite[Section~2.6]{mb} a canonical principal polarization $\lambda \colon \jj \isom \check{\jj}$. Let $\bb$ be the Poincar\'e bundle on $\jj \times_Y \check{\jj}$, and denote by $\bb_\lambda$ the $C^\infty$-hermitian rigidified line bundle $(\mathrm{id},\lambda)^*\bb$ on $\jj \times_Y \jj$.

Let $(\mathcal{L}, \mathcal{M})$ be a pair of line bundles of relative degree zero on $Z \to Y$. They naturally give rise to a normal function section $\nu \colon Y \to \jj \times_Y \jj$ by computing fiberwise the classes of $\mathcal{L}$ resp. $\mathcal{M}$ in the jacobian. Next, for any line bundle $\mathcal{L}$ of relative degree zero on $Y$, we denote by $\det \R p_* (\mathcal{L})$ the determinant of cohomology \cite{de} of $\mathcal{L}$  along $p$. This is a line bundle on $Y$.
\begin{prop} \label{detcohombiext} The tensor product
\[ \det \R p_*(\mathcal{L}\otimes\mathcal{M})^{\otimes -1} \otimes \det \R p_*(\mathcal{L}) \otimes \det \R p_*(\mathcal{M}) \otimes \det \R p_* (\mathcal{O})^{\otimes -1} \]
is canonically equipped with a $C^\infty$-hermitian metric.
Further, there exists a canonical isomorphism of line bundles
\[ \nu^* \bb_\lambda \isom \det \R p_*(\mathcal{L}\otimes\mathcal{M})^{\otimes -1} \otimes \det \R p_*(\mathcal{L}) \otimes \det \R p_*(\mathcal{M}) \otimes \det \R p_* (\mathcal{O})^{\otimes -1} \]
on $Y$ which is an isometry for the canonical metrics on left and right hand side.
\end{prop}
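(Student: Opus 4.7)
The approach I would take is to identify the displayed tensor product with (the inverse of) the Deligne pairing $\langle\mathcal{L},\mathcal{M}\rangle$, and then invoke Moret-Bailly's identification of the Deligne pairing with the pullback of the Poincar\'e biextension along the Abel--Jacobi map. The heavy lifting is already done in \cite[Section~2.6]{mb}; the only real work is to transport the algebraic identification into the analytic/metric setting.

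For the first assertion, one uses the Knudsen--Mumford formula for a family of smooth curves, which gives a canonical isomorphism
\[
\langle\mathcal{L},\mathcal{M}\rangle \;\cong\; \det\R p_*(\mathcal{L}\otimes\mathcal{M})\otimes\det\R p_*(\mathcal{L})^{\otimes -1}\otimes\det\R p_*(\mathcal{M})^{\otimes -1}\otimes\det\R p_*(\mathcal{O}_Z),
\]
thereby identifying the tensor product in the statement with $\langle\mathcal{L},\mathcal{M}\rangle^{\otimes -1}$. Now choose arbitrary smooth hermitian metrics on $\mathcal{L}$ and $\mathcal{M}$ and form the induced Deligne metric on $\langle\mathcal{L},\mathcal{M}\rangle$ (recalled in Section \ref{section:deligne}). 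The transformation rule under rescaling of these metrics by $e^{-\phi}$ is multiplication of the Deligne metric by $\exp\!\int_{Z/Y}\phi\,c_1(\,\text{other bundle}\,)$, which vanishes once both bundles have relative degree zero. Hence the Deligne metric depends on none of the choices made, and we obtain the desired canonical $C^\infty$-hermitian metric on the tensor product.

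For the second assertion, the isomorphism $\nu^*\mathcal{B}_\lambda\isom\langle\mathcal{L},\mathcal{M}\rangle^{\otimes -1}$ is constructed in \cite[Section~2.6]{mb}: the normal function $\nu$ classifies the fiberwise Abel--Jacobi images of $\mathcal{L}$ and $\mathcal{M}$, and the principal polarization $\lambda$ identifies the fiber of $\mathcal{B}_\lambda$ at $(x,y)$ with $\mathcal{L}|_{x+y}\otimes\mathcal{L}|_x^{-1}\otimes\mathcal{L}|_y^{-1}\otimes\mathcal{L}|_e$ in the style of Proposition~\ref{PoincareLambda}; pulling back along $\nu$ and identifying fibers of $\langle\mathcal{L},\mathcal{M}\rangle$ via Deligne's construction yields the isomorphism, compatibly with the rigidifications along the zero section on both sides (both take the canonical trivialization when either $\mathcal{L}$ or $\mathcal{M}$ equals $\mathcal{O}_Z$).

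To upgrade this isomorphism to an isometry, note that $\nu^*\mathcal{B}_\lambda$ is equipped by pullback with a metric whose first Chern form is translation-invariant in each fiber of $\mathcal{J}\times_Y\mathcal{J}\to Y$ (Proposition~\ref{metriconbiext}), and the Deligne metric on $\langle\mathcal{L},\mathcal{M}\rangle^{\otimes -1}$ has the analogous property by the standard curvature formula $c_1(\langle\mathcal{L},\mathcal{M}\rangle)=p_*(c_1(\mathcal{L})\wedge c_1(\mathcal{M}))$, together with the fact that degree-zero fiberwise line bundles admit flat metrics. Both metrics also pull back trivially along the zero section by construction. The uniqueness of a rigidified metric on $\mathcal{B}_\lambda$ with translation-invariant first Chern form (the criterion of \cite[Corollary~5.4]{hrar} quoted after Proposition~\ref{explicitnorm}) then forces the two metrics to agree, so the isomorphism is an isometry. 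The main obstacle is Step~3, verifying fiberwise translation-invariance for the Deligne pairing metric; this amounts to a direct computation with theta functions along the Abel--Jacobi map, consistent with the explicit formula of Proposition~\ref{explicitnorm}.
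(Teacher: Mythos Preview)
Your overall strategy—routing through the Deligne pairing and Moret-Bailly's identification—is reasonable and is in fact how the paper obtains the subsequent Corollary~\ref{pairingbiext}. The paper's own proof of the present proposition is more direct: it cites \cite[Th\'eor\`eme 4.13, Remarque 4.13.1(b)]{mb} for the canonical metric on $\det\R\Gamma(X,\mathcal{L})\otimes\det\R\Gamma(X,\mathcal{O}_X)^{-1}$ when $\mathcal{L}$ carries a \emph{flat} metric, observes that the four-fold tensor product is then insensitive to the remaining constant rescalings via the factor $\alpha^{\chi(\mathcal{L})}=\alpha^{1-g}$, and then invokes \cite[Corollaire 4.14.1]{mb} for the isometry, all fiberwise.

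However, your execution has two genuine gaps. First, the anomaly formula you quote does \emph{not} show that the Deligne metric is independent of arbitrary smooth metrics on $\mathcal{L},\mathcal{M}$. The change under $\|\cdot\|_\mathcal{L}\mapsto e^{-\phi}\|\cdot\|_\mathcal{L}$ is (up to sign) $\int_{Z/Y}\phi\,c_1(\mathcal{M},\|\cdot\|_\mathcal{M})$; relative degree zero gives $\int_{Z_y}c_1(\mathcal{M})=0$, but $\int_{Z_y}\phi\,c_1(\mathcal{M})$ need not vanish unless $c_1(\mathcal{M})=0$ as a form, i.e., unless the metric on $\mathcal{M}$ is flat. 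This is exactly why the paper (following Moret-Bailly) restricts to flat hermitian metrics from the outset; only then is the residual ambiguity a constant rescaling, which the $\alpha^{1-g}$ computation handles.

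Second, your isometry argument applies the uniqueness criterion at the wrong level. The criterion of \cite[Corollary 5.4]{hrar} characterizes metrics on $\mathcal{B}_\lambda$ over $\mathcal{J}\times_Y\mathcal{J}$ via translation-invariance of the first Chern form in the fibers; but $\nu^*\mathcal{B}_\lambda$ and $\langle\mathcal{L},\mathcal{M}\rangle^{-1}$ live on $Y$, where there is no translation action to speak of, so the phrase ``the Deligne metric on $\langle\mathcal{L},\mathcal{M}\rangle^{-1}$ has the analogous property'' has no content. To run your argument you would have to construct the Deligne-pairing metric universally, as $(\mathcal{L},\mathcal{M})$ ranges over a Poincar\'e family on $\mathcal{J}\times_Y\mathcal{J}$, verify the two properties there, and then apply uniqueness on $\mathcal{J}\times_Y\mathcal{J}$. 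That is essentially the content of Moret-Bailly's result; the correct citation is \cite[Corollaire 4.14.1]{mb}, not Section~2.6 (which only furnishes the principal polarization $\lambda$).
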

\begin{proof} Let $X$ be a compact Riemann surface of positive genus $g$. \cite[Th\'eor\`eme 4.13 and Remarque 4.13.1(b)]{mb} imply the existence, for each line bundle $\mathcal{L}$ of degree zero on $X$ equipped with a hermitian metric $\|\cdot \|$ with vanishing first Chern form, of a canonical metric on the tensor product of determinants of cohomology $\det \R \Gamma(X,\mathcal{L}) \otimes \det \R \Gamma(X,\mathcal{O}_X)^{\otimes -1}$. Changing the metric $\|\cdot \|$ on $\mathcal{L}$ to a metric $\alpha \|\cdot\|$ for some $\alpha \in \rr_{>0}$ will change the canonical metric on $\det \R \Gamma(X,\mathcal{L}) \otimes \det \R \Gamma(X,\mathcal{O}_X)^{\otimes -1}$ by a factor $\alpha^{\chi(\mathcal{L})}=\alpha^{1-g}$. It follows that for flat hermitian line bundles $\mathcal{L}$, $\mathcal{M}$, the tensor product of determinants of cohomology
\[ \det \R \Gamma(\mathcal{L}\otimes\mathcal{M})^{\otimes -1} \otimes \det \R \Gamma(\mathcal{L}) \otimes \det \R \Gamma(\mathcal{M}) \otimes \det \R \Gamma (\mathcal{O})^{\otimes -1} \]
is equipped with a canonical hermitian metric independent of the flat metrics chosen on $\mathcal{L}$, $\mathcal{M}$. Let $\bb_\lambda$ on $J \times J$ be the pullback along $(\mathrm{id},\lambda)$ of the Poincar\'e bundle on $J\times \check{J}$ where $J$ is the jacobian of $X$, and $\lambda \colon J \isom \check{J}$ the canonical principal polarisation. Then \cite[Corollaire~4.14.1]{mb} asserts that there exists a canonical isomorphism
\[  (\mathcal{L},\mathcal{M})^*\bb_\lambda \isom \det \R \Gamma(\mathcal{L}\otimes\mathcal{M})^{\otimes -1} \otimes \det \R \Gamma(\mathcal{L}) \otimes \det \R \Gamma(\mathcal{M}) \otimes \det \R \Gamma (\mathcal{O})^{\otimes -1} \]
of $\cc$-vector spaces which is an isometry for the canonical metrics on left and right hand side. Letting $X$ vary in the family $Z \to Y$ we obtain the isometry from the proposition. As the metric on the left hand side varies in a $C^\infty$ way, so does the canonical metric on the right hand side.
\end{proof}
Now assume that $\dim(Y)=1$, and let $\overline{Y}$ be the smooth compactification of $Y$. Let $\overline{Z} \to \overline{Y}$ be a proper flat morphism extending $Z \to Y$, and assume that (a) $\overline{Z}$ is smooth and (b) the fibers of $\overline{Z} \to \overline{Y}$ are reduced and have only ordinary double points as singularities. Let $\overline{\jj} \to \overline{Y}$ be the identity component of the N\'eron model of $\jj \to Y$ over $\overline{Y}$. By Proposition \ref{existBbar} there exists a unique (up to isomorphism) rigidified line bundle $\overline{\bb}_\lambda$ on $\overline{\jj} \times_{\overline{Y}} \overline{\jj}$ extending the rigidified line bundle $\bb_\lambda$ on $\jj \times_Y \jj$.

Now let $\mathcal{L},\mathcal{M}$ be two line bundles on $\overline{Z}$, of degree zero on each irreducible component of each fiber of $\overline{Z} \to\overline{Y}$. By a result of Raynaud \cite[Theorem 9.7.1]{blr} one has a natural isomorphism between $\overline{\jj}$ and the identity component $\Pic^0(\overline{Z}/\overline{Y})$ of the Picard scheme of $\overline{Z}/\overline{Y}$. Thus, by computing fiberwise the classes of $\mathcal{L}$ resp. $\mathcal{M}$ in $\Pic^0(\overline{Z}/\overline{Y})$ we obtain a natural normal function section $\overline{\nu} \colon \overline{Y} \to \overline{\jj} \times_{\overline{Y}} \overline{\jj}$ extending the normal function $\nu \colon Y \to \jj \times_Y \jj$ associated to the restrictions of $\mathcal{L}$, $\mathcal{M}$ to $Z \to Y$.
\begin{prop} \label{detcohombiextbar} We have a canonical isomorphism
\[ \overline{\nu}^* \overline{\bb}_\lambda \isom  \det \R p_*(\mathcal{L}\otimes\mathcal{M})^{\otimes -1} \otimes \det \R p_*(\mathcal{L}) \otimes \det \R p_*(\mathcal{M}) \otimes \det \R p_* (\mathcal{O})^{\otimes -1} \]
of line bundles on $\overline{Y}$ extending the isomorphism from Proposition \ref{detcohombiext} on $Y$.
\end{prop}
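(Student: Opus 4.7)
The plan is to exhibit the right hand side as the pullback along $\overline{\nu}$ of a rigidified line bundle on $\overline{\jj} \times_{\overline{Y}} \overline{\jj}$ that extends $\bb_\lambda$, and then invoke the uniqueness asserted in Proposition \ref{existBbar}. Set
\[ \ccN(\mathcal{L},\mathcal{M}) := \det \R p_*(\mathcal{L}\otimes\mathcal{M})^{\otimes -1} \otimes \det \R p_*(\mathcal{L}) \otimes \det \R p_*(\mathcal{M}) \otimes \det \R p_*(\mathcal{O})^{\otimes -1}. \]

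First I would check that the formation of $\ccN$ is biadditive in $(\mathcal{L},\mathcal{M})$ and carries a canonical trivialization as soon as either argument is trivial. Using the chi-multiplicativity of the determinant of cohomology, $\det \R p_*(\mathcal{L}\otimes p^*N) \cong \det \R p_*(\mathcal{L})\otimes N^{\otimes \chi(\mathcal{L})}$ for a line bundle $N$ on $\overline{Y}$, the twisting contributions from $\mathcal{L}\otimes\mathcal{M}$ and $\mathcal{L}$ cancel against those from $\mathcal{M}$ and $\mathcal{O}$, so $\ccN(\mathcal{L},\mathcal{M})$ is canonically invariant under tensoring either argument by a pullback from $\overline{Y}$.

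Next, using Raynaud's isomorphism $\overline{\jj}\cong\Pic^0(\overline{Z}/\overline{Y})$ and an étale-local choice of universal line bundle on $\overline{\jj}\times_{\overline{Y}}\overline{Z}$, the invariance above lets the construction descend to a rigidified line bundle $\widetilde{\ccN}$ on $\overline{\jj}\times_{\overline{Y}}\overline{\jj}$ whose pullback along $\overline{\nu}$ recovers $\ccN(\mathcal{L},\mathcal{M})$ canonically. By Proposition \ref{detcohombiext}, the restriction of $\widetilde{\ccN}$ to $\jj\times_Y\jj$ is the rigidified line bundle $\bb_\lambda$. The uniqueness statement of Proposition \ref{existBbar} then produces a canonical isomorphism $\widetilde{\ccN}\isom\overline{\bb}_\lambda$ of rigidified line bundles; pulling back along $\overline{\nu}$ gives the desired isomorphism on $\overline{Y}$, and by construction it restricts over $Y$ to the isomorphism of Proposition \ref{detcohombiext}.

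The main obstacle is the descent step: the universal line bundle on $\Pic^0(\overline{Z}/\overline{Y})\times_{\overline{Y}}\overline{Z}$ typically exists only étale-locally on $\overline{\jj}$ and only up to twist by a pullback from $\overline{\jj}$, so one must verify that the biadditive and chi-multiplicative cancellations above kill this ambiguity compatibly on overlaps, so that $\widetilde{\ccN}$ really glues to a global rigidified line bundle. This is the biextension content of Moret-Bailly's construction in \cite{mb}, the same input that underlies Proposition \ref{existBbar}; in effect one is asserting that the isomorphism of Proposition \ref{detcohombiext}, viewed as the universal instance of the claim over $\jj\times_Y\jj$, extends naturally over the Néron model.
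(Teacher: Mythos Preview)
Your proposal is correct and is essentially an unpacking of the reference the paper cites: the paper's own proof consists of the single line ``See \cite[Corollaire 2.8.5]{mb}.'' The biextension-via-determinant-of-cohomology construction you outline, together with the invariance under twists by pullbacks and the resulting descent to a rigidified line bundle on $\overline{\jj}\times_{\overline{Y}}\overline{\jj}$, is precisely the content of Moret-Bailly's result, so your argument and the paper's are the same in substance---you have simply made explicit what the paper delegates to the citation.
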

\begin{proof} See \cite[Corollaire 2.8.5]{mb}.
\end{proof}

\section{Deligne pairing} \label{section:deligne}

In this section we recall Deligne's pairing and its connection with the Poincar\'e line bundle. References for this section are \cite[Section XIII.5]{acg} and \cite[Section~6]{de}. Let $Y$ be a smooth connected complex algebraic variety and let $p \colon Z \to Y$ be a smooth proper morphism with connected fibers of dimension one. Let $\mathcal{L}, \mathcal{M}$ be two line bundles on $Z$. Then to these data one has canonically associated a line bundle $\langle \mathcal{L},\mathcal{M} \rangle$ on $Y$, as follows: local generators are symbols $\langle l,m \rangle$, where $l,m$ are local generating sections of $\mathcal{L}, \mathcal{M}$. The relations to be satisfied are
\[ \langle l, fm \rangle = f[\divisor{l}] \cdot \langle l,m \rangle \, , \quad
\langle fl, m \rangle = f[\divisor{m}] \cdot \langle l, m \rangle  \]
for all local regular functions $f$. We call $\pair{\mathcal{L},\mathcal{M}}$ the Deligne pairing of $\mathcal{L},\mathcal{M}$. It is straightforward to verify that for line bundles $\mathcal{L}_1$, $\mathcal{L}_2$, $\mathcal{M}_1$, $\mathcal{M}_2$, $\mathcal{L}$, $\mathcal{M}$ on $X$ we have canonical isomorphisms \[ \langle \mathcal{L}_1 \otimes \mathcal{L}_2, \mathcal{M} \rangle \isom \langle \mathcal{L}_1 , \mathcal{M} \rangle \otimes \langle
\mathcal{L}_2 , \mathcal{M} \rangle \, , \, \langle \mathcal{L}, \mathcal{M}_1 \otimes \mathcal{M}_2 \rangle \isom \langle \mathcal{L}, \mathcal{M}_1 \rangle \otimes \langle \mathcal{L} , \mathcal{M}_2 \rangle \, , \]
and $\langle \mathcal{L},\mathcal{M} \rangle \isom \langle \mathcal{M} , \mathcal{L}\rangle$. An isomorphism $\mathcal{L}_1 \isom \mathcal{L}_2$ induces a natural isomorphism $\langle \mathcal{L}_1, \mathcal{M} \rangle \isom \langle \mathcal{L}_2, \mathcal{M} \rangle$. Moreover, the formation of the Deligne pairing commutes with arbitrary base change.

Now assume $\mathcal{L}, \mathcal{M}$ are equipped with $C^\infty$-hermitian metrics. Then we can put a natural hermitian structure $\|\cdot\|$ on $\langle \mathcal{L} , \mathcal{M} \rangle$ by requiring (cf. \cite[Section~6.3]{de})
\begin{equation} \label{metric} \log \| \langle l,m \rangle \| = \int_{Z/Y} \left(\frac{\deldelbar}{\pi i} \log \| l \| \right) \cdot \log \| m \| + \log \| l \| [\divisor{m}] + \log \| m \| [\divisor{l}]
\end{equation}
for local generating sections $l$ resp. $m$ with disjoint supports, where the $\deldelbar$ is taken in the sense of distributions. It turns out that $\|\cdot \|$ is a $C^\infty$-hermitian metric on $\langle \mathcal{L},\mathcal{M} \rangle$, and each of the above canonical isomorphisms is isometric.

We are in particular interested in the case $\mathcal{L}=\mathcal{O}_Z(D)$, $\mathcal{M}=\mathcal{O}_Z(E)$ where $D,E$ are divisors on $Z$ of relative degree zero. Then we write $\langle D,E \rangle$ as a shorthand for $\langle \mathcal{O}_Z(D), \mathcal{O}_Z(E)\rangle $. Let $1_D$ resp. $1_E$ denote the canonical meromorphic sections of $\mathcal{O}_Z(D)$ resp. $\mathcal{O}_Z(E)$. They give rise to a canonical meromorphic section $\langle 1_D,1_E \rangle$ of the Deligne pairing $\langle D,E \rangle$.

We can put a natural $C^\infty$-hermitian structure on $\mathcal{O}_Z(D)$ and $\mathcal{O}_Z(E)$ as follows. Let $g$ be the genus of the fibers of $p \colon Z \to Y$. Let $\mathcal{F}$ be the vector bundle $p_* \Omega^1_{Z/Y}$ of rank $g$ on $Y$. Note first of all that the flat intersection form on the local system $\mathrm{R}^1 p_* \zz_Z(1)$ of weight $-1$ extends to a flat non-degenerate $\rr$-bilinear alternating pairing $E$ on the dual bundle $\mathcal{F}^\lor$. Let $H \colon (v,w) \mapsto E(iv,w)+iE(v,w)$ be the corresponding hermitian form; then $H$ is well known to be positive definite.

Let $H^*$ be the induced hermitian form on $\mathcal{F}$ (given by $H^*(\omega,\omega') = \frac{i}{2} \int_{Z/Y} \omega \wedge \bar{\omega}'$).
Let $(\omega_1,\ldots,\omega_g)$ be a local holomorphic frame of $\mathcal{F}$, and let $B$ be the matrix of $H^*$ with respect to this frame. We then have a canonical $(1,1)$-form $\mu$ on $Z$ by putting locally
\[ \mu = \frac{i}{2g} \sum_{j,k=1}^g    \overline{B}^{-1}_{j,k} \, \omega_j \bar{\omega}_k \, , \]
which is independent of the chosen frame. On each fiber of $Z \to Y$, the form $\mu$ restricts to the canonical (Arakelov) volume form \cite[Section~3]{mb} \cite[Section~2]{we}.

Now using $\mu$ one has a natural way of normalizing the Green's function $g_{D_p}$ associated to the restriction $D_p$ of $D$ to $Z_p$ by requiring that
\begin{equation} \label{normalisation} \int_{Z/Y} g_{D} \, \mu = 0
\end{equation}
identically on $Y$. Assume the Green's functions $g_{D_p}$ are normalised this way. The line bundles $\mathcal{O}_X(D), \mathcal{O}_X(E)$ then have a canonical $C^\infty$-hermitian structure, given by putting $\log \|1_D\|(q) = g_{D_p}(q)$ for $q \in Z_p$ outside the support of $D_p$, and likewise $\log \|1_E\|(q) = g_{E_p}(q)$ for $q \in Z_p$ outside the support of $E_p$.

The resulting hermitian structure on $\pair{D,E}$ can be characterised as follows. Assume $D,E$ have generically disjoint support. Let $V$ be a non-empty open subset of $Y$ such that $D_p,E_p$ are divisors with disjoint support on $X_p$ for each $p \in V$. Then the restriction of $\langle 1_D,1_E \rangle$ to $V$ is a generating section of $\langle D,E \rangle $ over $V$.  By equations (\ref{defproperty}) and (\ref{normalisation}), the metric (\ref{metric}) on $\langle D,E \rangle $ is just given by the formula
\begin{equation} \label{formulalognorm}
\log \| \langle 1_D,1_E \rangle \| = g_{D_p}[E_p]
\end{equation}
for $p \in V$. That is, we have a natural interpretation of the archimedean N\'eron height pairing as the log norm of a canonical section in a suitable Deligne pairing on $Y$.

As in Section \ref{section:poincare} let $\det \R p_* (\mathcal{L})$ denote the determinant of cohomology of a line bundle $\mathcal{L}$ on $Z$ along~$p$. Recall from Proposition \ref{detcohombiext} that the tensor product
\[ \det \R p_*(\mathcal{L}\otimes\mathcal{M})^{\otimes -1} \otimes \det \R p_*(\mathcal{L}) \otimes \det \R p_*(\mathcal{M}) \otimes \det \R p_* (\mathcal{O})^{\otimes -1} \]
is equipped with a canonical $C^\infty$-hermitian metric. The metrized version of the Riemann-Roch theorem proven in \cite[Th\'eor\`eme 11.4]{de} now implies the following result.
\begin{prop} \label{rr} Assume that both $\mathcal{L}, \mathcal{M}$ have relative degree zero. There exists a canonical isomorphism of line bundles
\begin{equation} \label{rreqn} \pair{\mathcal{L},\mathcal{M}} \isom \det \R p_*(\mathcal{L}\otimes\mathcal{M}) \otimes \det \R p_*(\mathcal{L})^{\otimes -1} \otimes \det \R p_*(\mathcal{M})^{\otimes -1} \otimes \det \R p_* (\mathcal{O})
\end{equation}
on $Y$. This isomorphism becomes an isometry upon equipping left and right hand side with their canonical metrics.
\end{prop}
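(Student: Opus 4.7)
The plan is to derive Proposition \ref{rr} from Deligne's metrized Riemann--Roch theorem \cite[Th\'eor\`eme 11.4]{de}. The underlying isomorphism (\ref{rreqn}) of line bundles is a classical consequence of the Riemann--Roch machinery for families of curves; the new content is that, under the canonical metrics on both sides, it becomes an isometry.

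For the isomorphism itself, I would observe that both sides of (\ref{rreqn}) are bi-multiplicative in the pair $(\mathcal{L}, \mathcal{M})$: the Deligne pairing is bi-multiplicative by its very construction, while the right-hand side is bi-multiplicative through standard properties of the determinant of cohomology applied to short exact sequences coming from Cartier divisor sequences. Both sides are also canonically trivial when either $\mathcal{L}$ or $\mathcal{M}$ equals $\mathcal{O}_Z$. Working \'etale-locally, one may therefore reduce to $\mathcal{L} = \mathcal{O}(D)$, $\mathcal{M} = \mathcal{O}(E)$ for relative Cartier divisors $D, E$ of relative degree zero, \'etale over $Y$ and with disjoint supports; the short exact sequence $0 \to \mathcal{O}_Z \to \mathcal{O}_Z(D) \to \mathcal{O}_D(D) \to 0$, and its analogues for $E$ and $D+E$, then pin the isomorphism down on the canonical symbol $\pair{1_D, 1_E}$ of the Deligne pairing.

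For the isometry, I would compare the Deligne formula (\ref{metric}) for the metric on $\pair{\mathcal{L}, \mathcal{M}}$ with the Faltings--Quillen metric on the determinant of cohomology used in Proposition \ref{detcohombiext}. A key consistency check is the scaling behaviour: rescaling the metric on $\mathcal{L}$ by a constant $\alpha > 0$ rescales the canonical metric on $\det \R p_*(\mathcal{L})$ by $\alpha^{\chi(\mathcal{L})} = \alpha^{1-g}$, whereas in the alternating combination on the right-hand side of (\ref{rreqn}) these anomalies cancel because $\chi(\mathcal{L} \otimes \mathcal{M}) - \chi(\mathcal{L}) - \chi(\mathcal{M}) + \chi(\mathcal{O}) = 0$, matching the rescaling invariance of (\ref{metric}). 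The genuine fiberwise identification of the two metrics, with the Arakelov form $\mu$ built from $H^*$ playing a central role in both definitions, is the analytic content of \cite[\S 11]{de}.

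The main obstacle is this isometry step: aligning the normalisation conventions for the Faltings metric on $\det \R p_*$ with those appearing in (\ref{metric}), and checking that all anomaly terms and the explicit fiberwise Green's function expressions match on the nose. This is exactly what Deligne's Th\'eor\`eme 11.4 accomplishes, so in the final write-up the proof of Proposition \ref{rr} reduces to invoking that result.
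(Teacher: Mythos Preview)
Your proposal is correct and aligns with the paper's approach: the paper simply states that Proposition~\ref{rr} follows from the metrized Riemann--Roch theorem \cite[Th\'eor\`eme~11.4]{de}, without further argument. Your additional remarks on bi-multiplicativity and scaling behaviour are sound elaborations, but in the end you correctly identify that the proof reduces to invoking Deligne's result.
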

Let $\jj \to Y$ be the jacobian fibration associated to the morphism $Z \to Y$, let $\bb$ be the Poincar\'e bundle on $\jj \times_Y \check{\jj}$, let $\lambda \colon \jj \isom \check{\jj}$ be the canonical polarization, and write $\bb_\lambda$ for the $C^\infty$-hermitian rigidified line bundle $(\mathrm{id},\lambda)^*\bb$ on $\jj \times_Y \jj$. Consider again a pair $\mathcal{L}$, $\mathcal{M}$ of line bundles of relative degree zero on $Z \to Y$. Let $\nu \colon Y \to \jj \times_Y \jj$ be the normal function section given by computing fiberwise the classes of $\mathcal{L}$ resp. $\mathcal{M}$ in the jacobian.
\begin{cor} \label{pairingbiext} There exists a canonical isomorphism of line bundles
\begin{equation} \label{pb} \pair{\mathcal{L},\mathcal{M}}^{\otimes -1} \isom \nu^* \bb_\lambda
\end{equation}
on $Y$. This isomorphism becomes an isometry upon equipping left and right hand side with their canonical metrics.
\end{cor}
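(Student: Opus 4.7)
The plan is to derive Corollary \ref{pairingbiext} directly by combining the two isometries already at our disposal, namely the metrized Riemann--Roch isomorphism of Proposition \ref{rr} and the Moret-Bailly-type identification of $\nu^*\bb_\lambda$ in Proposition \ref{detcohombiext}.

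First, I would write down the isometry of Proposition \ref{rr} applied to the relative degree zero line bundles $\mathcal{L}, \mathcal{M}$:
\[ \pair{\mathcal{L},\mathcal{M}} \isom \det \R p_*(\mathcal{L}\otimes\mathcal{M}) \otimes \det \R p_*(\mathcal{L})^{\otimes -1} \otimes \det \R p_*(\mathcal{M})^{\otimes -1} \otimes \det \R p_* (\mathcal{O}) \, . \]
Taking duals on both sides (and noting that forming the dual of a hermitian line bundle is an isometry between the bundle and its inverse in the natural sense) yields
\[ \pair{\mathcal{L},\mathcal{M}}^{\otimes -1} \isom \det \R p_*(\mathcal{L}\otimes\mathcal{M})^{\otimes -1} \otimes \det \R p_*(\mathcal{L}) \otimes \det \R p_*(\mathcal{M}) \otimes \det \R p_* (\mathcal{O})^{\otimes -1} \, , \]
again as an isometry for the canonical metrics.

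Second, I would invoke Proposition \ref{detcohombiext}, which provides a canonical isometry between the right hand side of the above display and $\nu^*\bb_\lambda$. Composing the two isometries gives the desired canonical isometric isomorphism $\pair{\mathcal{L},\mathcal{M}}^{\otimes -1} \isom \nu^*\bb_\lambda$ on $Y$.

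There is no real obstacle here: both ingredients have been set up in the preceding sections, and one only needs to check that the canonicity and isometry properties are preserved under dualization and composition, which is formal. The only minor point worth mentioning explicitly is that the canonical metric on $\pair{\mathcal{L},\mathcal{M}}^{\otimes -1}$ is, by definition, the one induced from the canonical Deligne metric on $\pair{\mathcal{L},\mathcal{M}}$, so the dualized isomorphism is automatically an isometry; the rest is a bookkeeping matter of matching the two tensor expressions term by term.
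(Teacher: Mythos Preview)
Your proof is correct and follows exactly the paper's approach: the paper's own proof simply reads ``This follows immediately upon combining Propositions \ref{detcohombiext} and \ref{rr},'' and you have spelled out precisely that combination (dualize the Riemann--Roch isometry of Proposition \ref{rr} and compose with the isometry of Proposition \ref{detcohombiext}).
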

\begin{proof} This follows immediately upon combining Propositions \ref{detcohombiext} and \ref{rr}.
\end{proof}
Finally, let $\overline{Y}$ be a smooth compactification of $Y$. Let $\overline{Z} \to \overline{Y}$ be a proper flat morphism extending $Z \to Y$, and assume as before that (a) $\overline{Z}$ is smooth and (b) the fibers of $\overline{Z} \to \overline{Y}$ are reduced and have only ordinary double points as singularities. Let $\mathcal{L},\mathcal{M}$ be two line bundles on $\overline{Z}$. The construction outlined above of the Deligne pairing of $\mathcal{L}$, $\mathcal{M}$ as a line bundle using generators and relations carries over to the present situation \cite[Section XIII.5]{acg} and yields a line bundle $\pair{\mathcal{L}, \mathcal{M}}$ on $\overline{Y}$. Likewise, one has the determinant of cohomology $\det \R p_*\mathcal{L}$ which is a line bundle on $\overline{Y}$.

Assume that $\dim(Y)=1$. Let $\overline{\jj} \to \overline{Y}$ be the identity component of the N\'eron model of the jacobian fibration $\jj \to Y$ over $\overline{Y}$. Let $\overline{\bb}_\lambda$ be the unique rigidified line bundle on $\overline{\jj} \times_{\overline{Y}} \overline{\jj}$ extending the line bundle $\bb_\lambda$ on $\jj \times_Y \jj$, whose existence is guaranteed by Proposition \ref{existBbar}.

Assume that both $\mathcal{L}$ and $\mathcal{M}$ are of degree zero on each irreducible component of each fiber of $\overline{Z} \to\overline{Y}$. Let $\overline{\nu} \colon \overline{\jj} \times_{\overline{Y}} \overline{\jj}$ be the normal function extending the normal function $\nu \colon Y \to \jj \times_Y \jj$ associated to the restrictions of $\mathcal{L}$, $\mathcal{M}$ to $Z \to Y$, as discussed in the previous section.
\begin{prop} \label{pairingbiextbar} There exists a canonical isomorphism of line bundles
\[  \pair{\mathcal{L},\mathcal{M}}^{\otimes -1} \isom \overline{\nu}^* \overline{\bb}_\lambda \]
on $\overline{Y}$, extending the isomorphism from (\ref{pb}) on $Y$.
\end{prop}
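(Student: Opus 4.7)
The plan is to deduce the extended isomorphism from the same two ingredients as were used in Corollary \ref{pairingbiext}, both of which are now available on $\overline{Y}$: namely, the Moret-Bailly type identification of $\overline{\nu}^*\overline{\bb}_\lambda$ with a combination of determinants of cohomology, and Deligne's Riemann–Roch isomorphism identifying the Deligne pairing with (the inverse of) that same combination. More precisely, Proposition \ref{detcohombiextbar} (already established by appeal to \cite[Corollaire 2.8.5]{mb}) provides a canonical isomorphism
\[
\overline{\nu}^*\overline{\bb}_\lambda \isom \det\R p_*(\mathcal{L}\otimes\mathcal{M})^{\otimes -1}\otimes \det\R p_*(\mathcal{L})\otimes \det\R p_*(\mathcal{M})\otimes \det\R p_*(\mathcal{O})^{\otimes -1}
\]
on $\overline{Y}$ extending the one on $Y$ from Proposition \ref{detcohombiext}. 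It therefore suffices to produce a canonical isomorphism of line bundles on $\overline{Y}$
\[
\pair{\mathcal{L},\mathcal{M}} \isom \det\R p_*(\mathcal{L}\otimes\mathcal{M})\otimes \det\R p_*(\mathcal{L})^{\otimes -1}\otimes \det\R p_*(\mathcal{M})^{\otimes -1}\otimes \det\R p_*(\mathcal{O})
\]
that restricts over $Y$ to the Riemann–Roch isomorphism of Proposition \ref{rr}, and then to take the tensor of the two.

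The production of this Riemann–Roch isomorphism over $\overline{Y}$ is the main technical step, and it follows from Deligne's relative Riemann–Roch theorem \cite[Théorème 11.4]{de}. Concretely, the construction of $\pair{\mathcal{L},\mathcal{M}}$ as a line bundle on $\overline{Y}$ via generators and relations, as well as the construction of $\det\R p_*(-)$, both make sense for the proper flat morphism $\overline{Z}\to \overline{Y}$; since by hypothesis (a) $\overline{Z}$ is smooth and (b) the fibers have at worst ordinary double point singularities, the morphism $\overline{Z}\to\overline{Y}$ is a semistable family of curves, which is the setting of Deligne's theorem. The canonical isomorphism produced there is compatible with base change, in particular with the open immersion $Y\hookrightarrow\overline{Y}$, so its restriction to $Y$ is precisely the isomorphism of Proposition \ref{rr} (the latter being the smooth-fiber case of the same statement, only there additionally enhanced to an isometry by the metric considerations of Section \ref{section:deligne}).

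Granting this, composing the inverse of the Riemann–Roch isomorphism with the Moret-Bailly isomorphism yields the desired canonical isomorphism
\[
\pair{\mathcal{L},\mathcal{M}}^{\otimes -1} \isom \overline{\nu}^*\overline{\bb}_\lambda
\]
on $\overline{Y}$. The fact that it extends the isomorphism of Corollary \ref{pairingbiext} is automatic: each of its two constituent isomorphisms restricts over $Y$ to its respective counterpart from Propositions \ref{detcohombiext} and \ref{rr}, whose composition is by definition the isomorphism of Corollary \ref{pairingbiext}.

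The only genuine obstacle is the invocation of Deligne's Riemann–Roch for the semistable family $\overline{Z}\to\overline{Y}$; however, this is exactly the context in which that theorem is formulated, and one could alternatively avoid quoting it directly by arguing for uniqueness: any two line bundles on the curve $\overline{Y}$ that agree on the Zariski-dense open $Y$ differ by a divisor supported on $\overline{Y}\setminus Y$, and one can pin down the coefficients at each boundary point by a local computation with the Deligne pairing on the surface $\overline{Z}$ (using the defining relations $\pair{l,fm}=f[\divisor l]\cdot\pair{l,m}$ near a node) and the corresponding local description of the Néron extension $\overline{\jj}$.
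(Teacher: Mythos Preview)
Your proof is correct and follows essentially the same route as the paper: combine Proposition \ref{detcohombiextbar} with a Riemann--Roch isomorphism over $\overline{Y}$ identifying $\pair{\mathcal{L},\mathcal{M}}$ with the alternating tensor of determinants of cohomology, and compose. The only difference is the reference invoked for the Riemann--Roch step in the semistable setting: the paper cites \cite[Theorem XIII.5.8]{acg} rather than Deligne's \cite[Th\'eor\`eme 11.4]{de}, but the content and structure of the argument are the same.
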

\begin{proof} According to \cite[Theorem XIII.5.8]{acg} we have a canonical isomorphism
\[ \pair{\mathcal{L},\mathcal{M}} \isom \det \R p_*(\mathcal{L}\otimes\mathcal{M}) \otimes \det \R p_*(\mathcal{L})^{\otimes -1} \otimes \det \R p_*(\mathcal{M})^{\otimes -1} \otimes \det \R p_* (\mathcal{O}) \]
of line bundles on $\overline{Y}$ extending the isomorphism (\ref{rreqn}) on $Y$. The proposition follows by applying Proposition \ref{detcohombiextbar}.
\end{proof}

\section{Proof of Theorem \ref{cor}} \label{section:cor}

Let $Y$ be a smooth connected complex curve, and $\overline{Y}$ its smooth compactification. Let $(\tt \to Y,\lambda)$ be a family of principally polarized abelian varieties. Assume that the monodromy around each boundary point is unipotent. By \cite[Expos\'e XI, 3.5--3.8]{SGA7}, the identity component  $\overline{\tt} \to \overline{Y}$ of the N\'eron model of $\tt \to Y$ over $\overline{Y}$ is a family of semiabelian varieties. As in Section \ref{section:poincare} we have the Poincar\'e bundle $\bb_\lambda$ on $\tt \times_Y \tt$ and a canonical extension $\overline{\bb}_\lambda$ on $\overline{\tt} \times_{\overline{Y}} \overline{\tt}$. Both $\bb_\lambda$ and its extension $\overline{\bb}_\lambda$ are endowed with a rigidification along the zero section; in fact this property characterizes the extension $\overline{\bb}_\lambda$ of $\bb_\lambda$. Recall that $\bb_\lambda$ is endowed with a canonical $C^\infty$-hermitian metric, which is made explicit in Proposition \ref{explicitnorm}, and the rigidification of $\bb_\lambda$ along the zero section is an isometry for the standard euclidean metric on $\oo_Y$.

We will derive Theorem \ref{cor} from the following general result. The result is a special case of \cite[Proposition 6.1]{lear}, which unfortunately has not been published.
\begin{thm} \label{learprop6.1} The canonical $C^\infty$-hermitian metric on $\bb_\lambda$ extends in a continuous manner over $\overline{\bb}_\lambda$.
\end{thm}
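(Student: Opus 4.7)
The question is local on $\overline{Y}$, so I fix a boundary point $s$, a disc $\Delta \subset \overline{Y}$ around $s$ with local parameter $t$, and work over $\Delta$. Via the canonical isomorphism $\psi$ of Proposition \ref{PoincareLambda}, the meromorphic section $\eta$ of $\Lambda(\Theta)$ from Proposition \ref{explicitnorm} yields a meromorphic section of $\bb_\lambda$ which, using the rigidification along the zero section that characterises $\overline{\bb}_\lambda$, extends meromorphically over $\overline{\bb}_\lambda$. To show that the metric extends continuously it then suffices to show that, for every pair of sections $\sigma_1, \sigma_2 \colon \Delta \to \overline{\tt}$ through any given point $p$ of the central fibre, the function $\log \|\eta\|(\sigma_1(t), \sigma_2(t))$ extends continuously (with possibly infinite values at points where $\eta$ acquires a zero or pole on the central fibre) to $t = 0$.

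By Schmid's nilpotent orbit theorem and the Riemann bilinear relations, after possibly pulling back along a ramified cover, the period map has the form $\tau(t) = \frac{N \log t}{2 \pi i} + \tau_0(t)$, with $N$ symmetric positive semidefinite of rank $r$ (the toric rank of the semiabelian central fibre) and $\tau_0 \colon \Delta \to \HH_g$ holomorphic. By Raynaud's uniformisation of the N\'eron model, any section of $\overline{\tt}$ extending over $s$ lifts, modulo $\zz^g$, to a continuous map $z \colon \Delta \to \cc^g$ with $\Im z$ bounded as $t \to 0$. Substituting such lifts $z(t), w(t)$ into
\[ \log \|\eta\|(z, w; \tau) = \log \left| \frac{\theta(z+w;\tau) \theta(0;\tau)}{\theta(z;\tau) \theta(w;\tau)} \right| - 2 \pi (\Im z)^T (\Im \tau)^{-1} \Im w, \]
the asymptotic $\Im \tau(t) = -\frac{N}{2\pi} \log |t| + O(1)$ together with the decomposition $\cc^g = \mathrm{im}(N) \oplus \ker(N)$ shows that $(\Im \tau)^{-1}$ vanishes of order $1/\log|t|$ on $\mathrm{im}(N)$ and has a continuous limit on $\ker N$, given by the inverse of the imaginary part of the period matrix of the abelian quotient $A$ of the central fibre; together with boundedness of $\Im z, \Im w$ this gives a continuous limit for the quadratic term. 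Simultaneously, the Fourier expansion
\[ \theta(z(t); \tau(t)) = \sum_{n \in \zz^g} t^{\,n^T N n / 2} \exp(\pi i \, n^T \tau_0(t) n + 2 \pi i \, n^T z(t)) \]
converges uniformly on compacta, and positivity of $N$ implies that all terms with $n \notin \ker N$ decay as a positive power of $|t|$; the limit is the theta function $\theta_A$ of the abelian quotient, evaluated at the projection of $z(0)$.

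At any point $p$ of the central fibre at which the four limiting theta factors $\theta_A$ evaluated at $z(0), w(0), z(0)+w(0), 0$ are all nonzero, $\log \|\eta\|$ has a continuous finite limit, yielding continuity of the metric at $p$. The main obstacle is the locus where one of these factors vanishes, where $\log \|\eta\|$ diverges and $\eta$ must be viewed as a meromorphic section of $\overline{\bb}_\lambda$ with a zero or pole along the central fibre. One handles this by replacing $\eta$ locally by a nonvanishing generating section $\rho = f \cdot \eta$ of $\overline{\bb}_\lambda$ and verifying that $\log \|\eta\| - \log |f|$ has a continuous limit at $p$. The required cancellation is that the logarithmic contributions produced by the quasi-periodicity $\theta(z + m + \tau n; \tau) = e^{-\pi i\, n^T \tau n - 2 \pi i \, n^T z} \theta(z; \tau)$, when used to bring $z, w$ into a fundamental domain before taking the $t \to 0$ limit, match precisely (with opposite sign) the leading logarithmic behaviour of $-2 \pi (\Im z)^T (\Im \tau)^{-1} \Im w$ expanded to leading order using positivity of $N$. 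This cancellation is the analytic heart of Lear's theorem and reflects the compatibility between the biextension structure of the Poincar\'e bundle and the structure of the N\'eron model under degeneration.
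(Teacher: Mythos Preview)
Your overall strategy matches the paper's: localize to a disc, use the period expansion $\tau(t)=\frac{A}{2\pi i}\log t+B(t)$, extend $j^*\theta$ holomorphically to $\cc^g\times\Delta$ via the Fourier expansion, and show that $(\Im\tau(t))^{-1}$ extends continuously over $t=0$. Two points deserve correction.

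First, your reduction ``check continuity along every pair of sections $\sigma_1,\sigma_2\colon\Delta\to\overline{\tt}$'' is not a valid criterion for continuity on the total space. The paper avoids this by working directly on the uniformization $\cc^g\times\Delta\to\overline{\tt}$ (which extends the uniformization of $\tt$) and proving that $\|s\|$ is continuous and nonvanishing as a function on $\cc^g\times\cc^g\times\Delta$, uniformly on compacta. Your own Fourier--expansion estimate already gives this uniformity, so the repair is easy, but the reduction to sections should be dropped.

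Second, and more substantively, your final paragraph misidentifies the remaining difficulty. There is \emph{no} cancellation to arrange between quasi--periodicity factors and the exponential term. Once you write a local generating section as $s=f\cdot\eta$, you have
\[
\|s\|=\bigl|\,f\cdot\tfrac{\theta(z+w)\theta(0)}{\theta(z)\theta(w)}\,\bigr|\cdot\exp\bigl(-2\pi\,{}^t(\Im z)(\Im\tau)^{-1}(\Im w)\bigr),
\]
and the two factors extend continuously and nonvanishingly \emph{separately}: the first because $s$ is generating (so $f$ times the theta ratio is a nowhere--vanishing holomorphic function on $\cc^g\times\cc^g\times\Delta$), and the second because $(\Im\tau(t))^{-1}$ itself extends continuously over $t=0$. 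The paper establishes the latter by a concrete block computation: conjugate $A$ to $\left(\begin{smallmatrix}0&0\\0&D\end{smallmatrix}\right)$ with $D$ positive diagonal, observe that each cofactor of $\Im\tau(t)$ is a polynomial of degree $\le n$ in $u(t)=-\tfrac{1}{2\pi}\log|t|$ while $\det\Im\tau(t)$ has degree exactly $n$ with leading coefficient $\det D\cdot\det Q_1(0)\neq 0$ (here $Q_1(0)$ is the period matrix of the abelian part), so every entry of the inverse extends continuously. Your heuristic ``vanishes like $1/\log|t|$ on $\mathrm{im}(N)$, continuous on $\ker N$'' is the right intuition but is not a proof; and since no quasi--periodicity shifts are ever applied (we stay on $\cc^g\times\Delta$, not in a fundamental domain), the cancellation you describe does not enter.
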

\begin{proof} The question is local for the analytic topology on $\overline{Y}$ so we may replace $Y$ by the punctured unit disk $\Delta^*$ and $\overline{Y}$ by the unit disk $\Delta$. Let $\hh$ be the Siegel upper half plane and let $\hh \to \Delta^*, v \mapsto t=\exp(2\pi i v)$ denote the universal cover of $\Delta^*$. Write $g$ for the relative dimension of the family $\tt \to \Delta^*$. Let $\hh_g$ denote the Siegel upper half space of degree $g$. As the monodromy around the origin is unipotent by assumption, we may assume that the period map $j \colon \hh \to \hh_g$ associated to the family $\tt \to \Delta^*$ satisfies the condition $j(v+1)=j(v) + A$ for some integral symmetric matrix $A$. According to \cite[Lemma 2.3]{nak}, as a multi-valued map on $\Delta^*$ the period map $j$ can in fact be written as
\begin{equation} \label{period}
j(t) = \frac{A}{2\pi i} \log t + B(t) \quad \textrm{for all} \quad t \in \Delta^* \, , \end{equation}
for some bounded holomorphic matrix $B \colon \Delta \to M_{g,g}(\cc)$ over $\Delta$. Here, the matrix $A$ is integral, symmetric and positive semi-definite. Let $\UU \to \hh_g$ be the universal abelian variety. The multi-valued map $j$ induces a single-valued map $\bar{j} \colon \Delta^* \to \langle A \rangle \setminus \hh_g$ and the family $\tt \to \Delta^*$ can be obtained from the family $\pair{A} \setminus \UU \to \pair{A} \setminus \hh_g$ by base change along $\bar{j}$. After replacing $\Delta^*$ by a finite cover $\Delta^* \to \Delta^*$ we may assume that the matrix $A$ is even. The expression for Riemann's theta function given in (\ref{Riemanntheta}) is then invariant under the monodromy action $\tau \mapsto \tau + A$. In particular, the divisor $\Theta$ and line bundle $\oo(\Theta)$ descend to a divisor and line bundle on $\pair{A} \setminus \UU$, which we continue to denote by the same symbols. The natural projection $\cc^g \times \hh_g \to \pair{A} \setminus \UU$ pulls back along $\bar{j}$ to a projection $\cc^g \times \Delta^* \to \tt$. By applying $j^*$ to the expression in (\ref{Riemanntheta}), we obtain a pullback theta function $j^*\theta$ on $\cc^g \times \Delta^*$ as well as a pullback divisor $j^*\Theta$ and a pullback line bundle $j^*\oo(\Theta)$ on $\tt$. Similarly, the $C^\infty$-hermitian line bundle $\Lambda(\Theta)$ defined in Section \ref{section:poincare} pulls back to a $C^\infty$-hermitian line bundle $j^*\Lambda(\Theta)$ on $\tt \times_{\Delta^*} \tt$. The line bundle $j^*\Lambda(\Theta)$ is rigidified along the zero section, and the rigidification is an isometry. By Proposition \ref{PoincareLambda} we have an isometry $\bb_\lambda \isom j^* \Lambda(\Theta)$.  Applying (\ref{period}) we obtain the series expansion
\begin{equation} \label{expansion} j^*\theta  =  \sum_{n \in \zz^g} t^{\frac{1}{2} {}^t n A n} \exp(\pi i {}^t n B(t) n + 2\pi i {}^t n z)  \end{equation}
for the pullback theta function on $\cc^g \times \Delta^*$.  As $A$ is positive semi-definite, each summand in this expansion is holomorphic. Let $M$ be a real number. After shrinking $\Delta$ if necessary we can find $c_1, c_2 >0$ such that
\[  |t^{\frac{1}{2} {}^t n A n} \exp(\pi i {}^t n B(t) n + 2\pi i {}^t n z)| < c_1 \exp(-c_2 \|n\|_\infty^2)  \]
for each $n \in \zz^g$, each $t \in \Delta$ and each $z \in \cc^g$ with $\|z\|_\infty \leq M$. It follows that the series in (\ref{expansion}) converges absolutely and uniformly on compact subsets of $\cc^g \times \Delta$, and consequently $j^*\theta$ extends as a holomorphic function on $\cc^g \times \Delta$. Note that the uniformization $\cc^g \times \Delta^* \to \tt$ extends into a uniformization $\cc^g \times \Delta \to \overline{\tt}$.  Write $\overline{\Theta}$ for the divisor induced by $j^*\theta$ on $\overline{\tt}$, and $\oo(\overline{\Theta})$ for the associated line bundle over $\overline{\tt}$. Let $m \colon \overline{\tt} \times_\Delta \overline{\tt} \to \overline{\tt}$ be the group law of the semi-abelian family $\overline{\tt} \to \Delta$, let $p_1,p_2 \colon \overline{\tt} \times_\Delta \overline{\tt} \to \overline{\tt}$ be the projections on the first resp. second factor, and let $e$ be the zero section of $\overline{\tt} \to \Delta$.
Put
\[ \overline{\Lambda} = m^* \oo(\overline{\Theta}) \otimes p_1^* \oo(\overline{\Theta})^{\otimes{-1}} \otimes p_2^* \oo(\overline{\Theta})^{\otimes -1}
\otimes e^* \oo(\overline{\Theta}) \]
on $\overline{\tt} \times_\Delta \overline{\tt}$.
Then the line bundle $\overline{\Lambda}$ is canonically rigidified along the zero section, and hence there is a natural isomorphism of holomorphic line bundles $\overline{\bb}_\lambda \isom \overline{\Lambda}$ extending the isometry $\bb_\lambda \isom j^*\Lambda(\Theta)$. We will be done once we prove that the metric on $j^*\Lambda(\Theta)$ extends in a continuous manner over $\overline{\Lambda}$.
Denote by $\eta$ the meromorphic section $m^* \theta \otimes p_1^* \theta^{\otimes{-1}} \otimes p_2^* \theta^{\otimes -1}
\otimes e^* \theta$ of $\overline{\Lambda}$. Put $t_0=0$.
Let $s$ be a local generating section of $\overline{\Lambda}$ around a point $(z_0,w_0;t_0)  \in \overline{\tt} \times_\Delta \overline{\tt}$. Then there is a meromorphic function $f$ on $\overline{\tt} \times_\Delta \overline{\tt}$ such that the identities
\[ s(z,w;t) = f(z,w;t) \eta(z,w;t) = f(z,w;t) \frac{ \theta(z+w;j(t))\theta(0;j(t)) }{\theta(z;j(t))\theta(w;j(t))} \]
hold locally around $(z_0,w_0;t_0)$. By Proposition \ref{explicitnorm} we find
\[ \begin{split} \| s\|(z,w;t) & = |f|(z,w;t) \|\eta\|(z,w;t) \\ & = \left| f(z,w;t) \frac{ \theta(z+w;j(t))\theta(0;j(t)) }{\theta(z;j(t))\theta(w;j(t))} \right| \exp(-2\pi {}^t (\Im z) (\Im j(t) )^{-1} (\Im w)) \end{split} \]
for $t\in \Delta^*$. We will be done once we prove that the function $\|s\|(z,w;t)$ extends as a non-vanishing continuous function over $t_0=0$. First of all, as $s$ is generating we find that the factor
\[ \left| f(z,w;t) \frac{ \theta(z+w;j(t))\theta(0;j(t)) }{\theta(z;j(t))\theta(w;j(t))} \right|  \]
extends in a non-vanishing continuous manner over $t_0=0$. It remains to consider the exponential factor
\[ \exp(-2\pi {}^t (\Im z) (\Im j(t) )^{-1} (\Im w)) \, . \]
It suffices to prove that the function $(\Im j(t))^{-1}$ extends continuously over $\Delta$. Write $u(t)=-\frac{1}{2\pi} \log |t|$ for $t \in \Delta^*$. From
equation (\ref{period}) we obtain that
\[ \Im j(t) = A\cdot u(t) + \Im B(t) \]
for $t \in \Delta^*$. As $A$ is integral, symmetric and positive semi-definite there exists an orthogonal matrix $C$ such that $CAC^{-1}$ has the shape
\[ CAC^{-1} = \left( \begin{array}{c|c}
0 & 0 \\ \hline  0 & \renewcommand{\arraystretch}{0.1} \begin{array}{ccc}
\lambda_1 & & 0 \\
 & \ddots & \\
0 & & \lambda_n \end{array} \end{array} \right) \]
with $\lambda_1,\ldots,\lambda_n$ the positive eigenvalues of $A$. Write
\[ C(\Im B(t)) C^{-1} = \left( \begin{array}{c|c}
Q_1(t) & Q_2(t) \\ \hline  {}^t Q_2(t) & Q_3(t) \end{array} \right) \quad \textrm{and} \quad D = \left( \renewcommand{\arraystretch}{0.1} \begin{array}{ccc}
\lambda_1 & & 0 \\
 & \ddots & \\
0 & & \lambda_n \end{array} \right) \, \]
where $Q_1(t)$ is a $(g-n)\times(g-n)$-matrix and $Q_3(t)$ is an $n \times n$-matrix.
Then
\[ \Im j(t) = C^{-1} \cdot \left( \begin{array}{c|c}
Q_1(t) & Q_2(t) \\ \hline  {}^t Q_2(t) & D \cdot u(t) + Q_3(t) \end{array} \right) \cdot C \]
and hence it suffices to prove that the inverse of the positive definite matrix
\[ R(t) = \left( \begin{array}{c|c}
Q_1(t) & Q_2(t) \\ \hline  {}^t Q_2(t) & D \cdot u(t) + Q_3(t) \end{array} \right) \]
extends continuously over $\Delta$. Note that each $Q_i(t)$ is a bounded continuous matrix on $\Delta$. Moreover, by \cite[Section 4.4.4]{chai}, the matrix $Q_1(0)$ can be viewed as a period matrix of the abelian variety part of the semiabelian variety $\overline{\tt}_0$ and hence is positive definite. Write $\lambda=\lambda_1\cdots \lambda_n \in \rr_{>0}$. We see that $\det R(t)$ is a polynomial of degree $n$ in $u(t)$ with bounded continuous coefficients and with leading coefficient $\lambda \cdot \det Q_1(t)$ which stays away from zero as $t \to 0$. Further, each cofactor of $R(t)$ is a polynomial of degree $\leq n$ in $u(t)$ with bounded continuous coefficients. Let $\gamma(t)$ be such a cofactor and write
\[ \gamma(t) = a_m(t)u(t)^m + \cdots + a_0(t) \, , \quad
\det R(t) = b_n(t) u(t)^n + \cdots + b_0(t) \, , \]
with $m \leq n$, with $a_i(t), b_j(t)$ bounded continuous functions and $b_n(t)=\lambda \cdot \det Q_1(t)$. Then the entries of $R(t)^{-1}$ have the shape
\[ \frac{ a_m(t) u(t)^{-(n-m)} + \cdots +a_0(t) u(t)^{-n} }
{ b_n(t) + b_{n-1}(t)u(t)^{-1} + \cdots +b_0(t) u(t)^{-n} } \, . \]
Now note that $u(t)^{-1}$ extends continuously over $\Delta$ with value $0$ at $t=0$. Also recall that $b_n(t)$ is continuous and does not vanish at $t=0$. We deduce that each entry of $R(t)^{-1}$ extends continuously over $\Delta$, and we are done.
\end{proof}
\begin{remark} The shape of the exponential factor $\sim \exp(-c(\log|t|)^{-1})$ indicates that in general, the canonical metric on $\bb_\lambda$ does not extend in a $C^\infty$ manner over $\overline{\bb}_\lambda$.
\end{remark}
\begin{proof}[Proof of Theorem \ref{cor}]
Let $\overline{\jj} \to \overline{S}$ be the identity component of the N\'eron model of the jacobian family $\jj \to S$ of $X \to S$ over $\overline{S}$. We have the Poincar\'e bundle $\bb_\lambda$ over $\jj \times_S \jj$ and its canonical extension $\overline{\bb}_\lambda$ over $\overline{\jj} \times_{\overline{S}} \overline{\jj}$ characterized by the property that the rigidification along the zero section of $\bb_\lambda$ extends over $\overline{\bb}_\lambda$. We have $\overline{\jj} \isom \Pic^0(\overline{X}/\overline{S})$ by Raynaud's theorem \cite[Theorem 9.7.1]{blr}. Hence the choice of the integers $m,n$ implies that the sections $\mu_1,\mu_2$ of $\jj \to S$ given by $\mu_1=[mD]$, $\mu_2=[nE]$ extend as sections $\bar{\mu}_1, \bar{\mu}_2$ of $\overline{\jj} \to \overline{S}$. More precisely, by definition of $\phi$ we can write $\bar{\mu}_1=[mD+\phi(mD)]$ and $\bar{\mu}_2=[nE+\phi(nE)]$, upon identifying $\overline{\jj}$ with $\Pic^0(\overline{X}/\overline{S})$.
As the family $\overline{X} \to \overline{S}$ is semistable, we find that the monodromy of $\overline{\jj} \to \overline{S}$ around each boundary point is unipotent. Applying Theorem \ref{learprop6.1} we obtain that $\overline{\bb}_\lambda$ can be endowed with a continuous hermitian metric extending the canonical metric on $\bb_\lambda$. By specializing we find that $(\bar{\mu}_1,\bar{\mu}_2)^* \overline{\bb}_\lambda$ can be endowed with a continuous hermitian metric extending the pullback metric on $(\mu_1,\mu_2)^* \bb_\lambda$.
By Proposition \ref{pairingbiext} we have a canonical isometry
\begin{equation} \label{canonicalisometry}
(\mu_1,\mu_2)^* \bb_\lambda \isom \pair{mD,nE} = \pair{D,E}^{\otimes mn}
\end{equation}
of $C^\infty$-hermitian line bundles on $S$. By Proposition \ref{pairingbiextbar} we have a canonical isomorphism
\[ (\bar{\mu}_1,\bar{\mu}_2)^* \overline{\bb}_\lambda \isom \pair{mD+\phi(mD),nE+\phi(nE)} \]
of line bundles on $\overline{S}$ extending the isomorphism (\ref{canonicalisometry}). Combining these observations the theorem follows.
\end{proof}

\section{Proof of Theorem \ref{main}} \label{section:main}

In this section we give a proof of Theorem \ref{main}. We deduce the result from the following elementary lemma, whose proof is left to the reader.
\begin{lem} \label{asympt} Let $\mathcal{L}$ be a holomorphic line bundle on $\Delta^*$ equipped with a continuous hermitian metric $\|\cdot\|_\mathcal{L}$. Let $\overline{\mathcal{L}}$ be an extension of $\mathcal{L}$ over $\Delta$, and assume that the metric $\|\cdot\|_{\mathcal{L}}$ extends in a continuous fashion over $\overline{\mathcal{L}}$. Let $s$ be a generating section of $\mathcal{L}$ over $\Delta^*$, and view $s$ as a meromorphic section of $\overline{\mathcal{L}}$. Let $a$ be its multiplicity at~$0$. Then the asymptotic relation $\log \|s\|_\mathcal{L}(p) \sim a \log |t(p)|$ holds as $p \to 0$ on $\Delta^*$.
\end{lem}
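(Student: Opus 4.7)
The plan is to reduce the statement to a local trivialization, since everything in sight is purely local around the point $0 \in \Delta$.

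First, after possibly shrinking $\Delta$, I would choose a generating (i.e., nowhere vanishing) holomorphic section $e$ of $\overline{\mathcal{L}}$ over $\Delta$. Such a section exists because every holomorphic line bundle on $\Delta$ is trivial (the disc is Stein and contractible). Since the hermitian metric on $\mathcal{L}$ extends continuously over $\overline{\mathcal{L}}$ and $e$ is nowhere zero, the real-valued function $p \mapsto \log \|e\|_{\overline{\mathcal{L}}}(p)$ is continuous on $\Delta$, hence bounded in a small neighborhood of $0$.

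Next, I would write $s = g \cdot e$ for a unique meromorphic function $g$ on $\Delta$. By hypothesis, $s$ is a generating section of $\mathcal{L}$ over $\Delta^{*}$, so $g$ is holomorphic and nowhere vanishing on $\Delta^{*}$, and its multiplicity at $0$ (as a meromorphic function, or equivalently the multiplicity of $s$ as a meromorphic section of $\overline{\mathcal{L}}$) equals the integer $a$. Therefore we can write $g(p) = t(p)^{a} h(p)$ with $h$ holomorphic and non-vanishing on a neighborhood of $0$, so that $\log|h|$ is continuous and bounded there.

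Finally, taking the log of $\|s\|_{\mathcal{L}}(p) = |g(p)|\,\|e\|_{\overline{\mathcal{L}}}(p) = |t(p)|^{a}\,|h(p)|\,\|e\|_{\overline{\mathcal{L}}}(p)$ on $\Delta^{*}$ gives
\[
\log \|s\|_{\mathcal{L}}(p) - a \log|t(p)| \;=\; \log|h(p)| + \log \|e\|_{\overline{\mathcal{L}}}(p),
\]
whose right-hand side is a bounded continuous function on a small disc around $0$. This is exactly the claim that $\log \|s\|_{\mathcal{L}}(p) \sim a \log|t(p)|$ as $p \to 0$. There is no genuine obstacle; the only point to handle with a little care is that $a$ may be negative (if $s$ has a pole at $0$), but the argument above treats all integer values of $a$ uniformly.
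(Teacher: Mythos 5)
Your proof is correct: trivializing $\overline{\mathcal{L}}$ near $0$, factoring out $t^{a}$, and noting that $\log|h|+\log\|e\|_{\overline{\mathcal{L}}}$ is continuous (hence bounded) near $0$ is exactly the elementary argument the paper has in mind when it leaves the proof of Lemma \ref{asympt} to the reader, and it handles negative $a$ with no extra work.
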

The proof of Theorem \ref{main} follows essentially by observing that the Green's functions $g_{D_p}[E_p]$ define the canonical metric on $\pair{D,E}$.
\begin{proof}[Proof of Theorem \ref{main}] By Theorem \ref{cor} the line bundle $\pair{mD+\phi(mD),nE+\phi(nE)}$ on $\overline{S}$ carries a continuous hermitian metric that extends the canonical hermitian metric $\| \cdot \|$ on $\pair{mD,nE}$ over $S$. Let $V \subset \overline{S}$ be an open neighborhood of $s$, such that the supports of $D,E$ are disjoint over $V \cap S$. Then over $V \cap S$, the canonical meromorphic section $\langle 1_{mD},1_{nE}\rangle$ of $\langle mD,nE \rangle $ is generating.
By equation (\ref{formulalognorm}) we have for the canonical metric $\| \cdot \|$ on $\langle mD, nE \rangle $ that $\log \| \langle 1_{mD},1_{nE}\rangle \|(p) = g_{mD_p}[nE_p]$ for each $p$ in $V$. By Lemma \ref{asympt} we are done once we prove that when we view $\pair{1_{mD},1_{nE}}$ as a meromorphic section of $\pair{mD+\phi(mD),nE+\phi(nE)}$ we have $\ord_s\pair{1_{mD},1_{nE}}=\pair{mD,nE}_{\textrm{a},s}$. Note that
the canonical meromorphic section $\langle 1_{mD+\phi(mD)}, 1_{nE+\phi(nE)} \rangle$ of $\pair{mD+\phi(mD),nE+\phi(nE)}$ extends the canonical meromorphic section $\langle 1_{mD},1_{nE} \rangle$ of $\langle mD,nE \rangle$ over $\overline{S}$. This gives
\[ \begin{split} \ord_s\langle 1_{mD},1_{nE} \rangle & = \ord_s \langle 1_{mD+\phi(mD)}, 1_{nE+\phi(nE)} \rangle \\ & = \langle mD+\phi(mD),nE+\phi(nE) \rangle_s = \pair{mD,nE}_{\textrm{a},s} \, , \end{split} \]
and we are done.
\end{proof}

\section{N\'eron pairing and Green's function on the reduction graph} \label{section:graph}

The purpose of this section is to relate the pairing $\pair{D,E}_{\textrm{a},s}$ to the Green's function on the reduction graph of the fiber $F$ of $\overline{X}$ at $s$. The material in this section is probably well known, and certainly implicit in \cite{zh}, but we have chosen to include proofs for results for which we do not know a suitable reference.

Let $\Gamma$ be a finite connected graph. Let $T_0$ be the set of vertices and $T_1$ the set of edges of $\Gamma$. Choose an orientation on $\Gamma$. This gives rise to the usual source and target maps $s,t \colon T_1\to T_0$. Let $d_* = t_*-s_* \colon \qq^{T_1} \to \qq^{T_0}$ and $d^* = t^* - s^* \colon \qq^{T_0} \to \qq^{T_1}$ be the boundary and coboundary maps. The \emph{Laplacian} $L$ of $\Gamma$ is defined to be the standard matrix of the composite $d_*d^* \colon \qq^{T_0} \to \qq^{T_0}$. The matrix $L$ is symmetric, and independent of the choice of orientation. We call the elements of $\qq^{T_0}$ \emph{divisors} on $\Gamma$. For a divisor $\dd= \sum_C a_C C$ we call $\sum_C a_C \in \qq$ the \emph{degree} of $\dd$.

As $\Gamma$ is connected, the kernel of $L$ consists of the constant functions, and the image of $L$ is the orthogonal complement of the constant functions, i.e. the set of divisors of degree zero. Let $L^+$ be the Moore-Penrose pseudo-inverse of $L$, i.e. the unique symmetric matrix of the same size as $L$ satisfying the two conditions $LL^+L=L$, $L^+LL^+=L^+$. We view $L^+$ as a bilinear pairing on the set $\qq^{T_0}$ of divisors. We call the $\emph{Green's function}$ on $\Gamma$ the function $g_\Gamma \colon T_0 \times T_0 \to \qq$ which associates to a pair of vertices $(C,C')$ of $\Gamma$ the rational number $L^+(\delta_C,\delta_{C'})$.

The following connection with electric network theory is useful. Thinking of each edge of $\Gamma$ as a resistance of one unit, we may identify $\Gamma$ with an electric circuit. Let $I \in \qq^{T_0}$ denote a degree-zero function which we think of as an electric flow through the circuit $\Gamma$ that has $I(C)$ units of current entering the circuit at a vertex $C$, if $I(C) > 0$, and $-I(C)$ units of current leaving the circuit at $C$, if $I(C) <0$. We look for a function $P \in \qq^{T_0}$ giving voltages that realize this flow. Such a function $P$ is determined by the matrix equation $LP=I$, hence such a $P$ exists, and is uniquely defined up to adding a constant function. A special solution is given by $P=L^+I$. Indeed, we can write $I=LI'$ for some $I'\in \qq^{T_0}$ as $I$ has degree zero, and then $LL^+I=LL^+LI'=LI'=I$.

Next, for any two vertices $C,C'$ of $\Gamma$ one has the \emph{effective resistance} $r_\Gamma(C,C') \in \qq$ between $C,C'$, defined as follows. Consider an electric flow that sends one unit of electric current into vertex $C$ and removes one unit of electric current from vertex $C'$. The effective resistance $r_\Gamma(C,C')$ between $C,C'$ is then defined to be the potential difference between $C$ and $C'$ that is required to realize this flow. In terms of the above, let $I \in \qq^{T_0}$ be the degree-zero divisor that has value $+1$ at $C$, $-1$ at $C'$, and $0$ else. Let $P=L^+I$. Then we have the useful relation
\begin{equation} \label{useful} r_\Gamma(C,C')=P(C)-P(C')={}^tIP={}^tI L^+I = g_\Gamma(C,C)-2g_\Gamma(C,C')+g_\Gamma(C',C')
\end{equation}
between the Green's function, on the one hand, and the effective resistance on the other.

By linearity we extend the effective resistance function as a bi-additive $\qq$-valued pairing on $\qq^{T_0}$. More precisely, whenever $\dd=\sum_i a_i C_i$ and $\ee=\sum_j b_jC_j$ are divisors on $\Gamma$, we write $r_\Gamma(\dd,\ee)=\sum_{i,j} a_ib_j r_\Gamma(C_i,C_j)$. Analogously we put $g_\Gamma(\dd,\ee)=\sum_{i,j} a_ib_j g_\Gamma(C_i,C_j)$.
\begin{prop} \label{green_r} Assume both $\dd,\ee$ are degree-zero divisors on $\Gamma$. Then the formula
\[  g_\Gamma(\dd,\ee) = -\frac{1}{2}r_\Gamma(\dd,\ee) \, .
\]
holds.
\end{prop}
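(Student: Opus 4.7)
The plan is to reduce the identity to the vertex-level relation (\ref{useful}) and exploit bilinearity together with the degree-zero hypothesis on $\dd$ and $\ee$.

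First I would write $\dd = \sum_i a_i C_i$ and $\ee = \sum_j b_j C_j$ with $\sum_i a_i = \sum_j b_j = 0$, and unfold the pairing $r_\Gamma(\dd,\ee)$ using its bi-additive extension:
\[ r_\Gamma(\dd,\ee) = \sum_{i,j} a_i b_j\, r_\Gamma(C_i,C_j). \]
Into each term I would substitute formula (\ref{useful}), which reads
\[ r_\Gamma(C_i,C_j) = g_\Gamma(C_i,C_i) - 2 g_\Gamma(C_i,C_j) + g_\Gamma(C_j,C_j). \]

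Next I would split the resulting triple sum into three pieces. The first,
\[ \sum_{i,j} a_i b_j\, g_\Gamma(C_i,C_i) = \left(\sum_j b_j\right) \sum_i a_i\, g_\Gamma(C_i,C_i), \]
vanishes because $\ee$ has degree zero; symmetrically, the third piece vanishes because $\dd$ has degree zero. What remains is
\[ r_\Gamma(\dd,\ee) = -2 \sum_{i,j} a_i b_j\, g_\Gamma(C_i,C_j) = -2 g_\Gamma(\dd,\ee), \]
which rearranges to the claimed formula.

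There is no real obstacle here: the only subtlety is remembering that (\ref{useful}) is stated for a pair of vertices and must be applied inside the bilinear expansion, after which the degree-zero hypothesis makes the diagonal Green's-function terms drop out. The computation is one short display.
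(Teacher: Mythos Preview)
Your proof is correct and follows essentially the same approach as the paper: both substitute the vertex-level identity (\ref{useful}) into the bilinear expansion and use the degree-zero hypothesis to kill the diagonal $g_\Gamma(C_i,C_i)$ terms. The only cosmetic difference is that the paper first reduces by bilinearity to divisors of the form $C_1-C_2$ and $C_1'-C_2'$ before carrying out the same cancellation, whereas you do it directly for general coefficients.
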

\begin{proof} It suffices by linearity to verify the case that $\dd=C_1-C_2$, $\ee=C_1'-C_2'$, where $C_1$, $C_2$, $C_1'$ and $C_2'$ are vertices of $\Gamma$. Then equation (\ref{useful}) yields
\[ \begin{split}
r_\Gamma(C_1-C_2,C_1'-C_2') &= r_\Gamma(C_1,C_1')-r_\Gamma(C_1,C_2')-r_\Gamma(C_2,C_1')+r_\Gamma(C_2,C_2') \\
& = -2(g_\Gamma(C_1,C_1')-g_\Gamma(C_1,C_2')-g_\Gamma(C_2,C_1')+g_\Gamma(C_2,C_2')) \, ,
\end{split} \]
and we are done.
\end{proof}
Let $q$ be a designated element of $\qq^{T_0}$. One then defines the \emph{canonical divisor} of $(\Gamma,q)$ to be the divisor $K=\sum_{C \in T_0} (v(C)+2q(C)-2)C$ of $\Gamma$. Here $v(C)$ denotes the valence of $C$. The \emph{genus} of $(\Gamma,q)$ is defined to be the integer $g=1+\frac{1}{2}\deg K = b(\Gamma)+\sum_C q(C)$, where $b(\Gamma) \in \zz$ denotes the Betti number of $\Gamma$. We say that $q$ is a \emph{polarisation}, and $(\Gamma,q)$ a \emph{polarised graph}, if $q$ is non-negative.

Next we view $\Gamma$ as a connected topological space by identifying each edge with a closed interval. We call a subgraph $\Gamma'$ of $\Gamma$ a \emph{bridge} if $\Gamma'$ has precisely one edge $e$, and if upon removing the interior of $e$ a non-connected space results. We call a subgraph $\Gamma'$ of $\Gamma$ a \emph{$2$-connected component} of $\Gamma$ if for each point $p$ on $\Gamma'$, the topological space $\Gamma' \setminus \{p\}$ is connected. We can write $\Gamma$ as a successive finite pointed sum of subgraphs $\Gamma_i$ such that each $\Gamma_i$ is either a bridge or a $2$-connected component of $\Gamma$.

Assume we are given such a finite pointed sum, then for each $i$ we have a natural projection map $\pi_i \colon \Gamma \to \Gamma_i$. The pushforward $\pi_{i*}$ along $\pi_i$ of divisors is well-defined, and preserves degrees. In particular, if $(\Gamma,q)$ is a polarised graph, we naturally obtain polarised graphs $(\Gamma_i,q_i)$ with canonical divisors $K_i$. Each $(\Gamma_i,q_i)$ has the same genus as $(\Gamma,q)$.

We have the following additivity property, which often proves useful in computations.
\begin{prop} \label{additivity}
Write $\Gamma=\sum_i \Gamma_i$ as the successive pointed sum of its bridges and $2$-connected components. Let $g_{\Gamma_i}$ denote the Green's function on the subgraph $\Gamma_i$. Assume $\dd,\ee$ are divisors of degree zero on $\Gamma$. Write $\dd_i=\pi_{i*}(\dd)$ and $\ee_i=\pi_{i*}(\ee)$. Then the equality
\[ g_\Gamma(\dd,\ee) = \sum_i g_{\Gamma_i}(\dd_i,\ee_i) \]
holds in $\qq$.
\end{prop}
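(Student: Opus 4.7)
The plan is to reduce, by induction on the number of blocks in the decomposition, to the case of a single pointed sum $\Gamma = \Gamma_1 \vee_p \Gamma_2$ at a cut vertex $p$. The core observation I would use throughout is that for any function $P$ on the vertex set of $\Gamma$ satisfying $L_\Gamma P = \dd$, one has $g_\Gamma(\dd,\ee) = {}^t\ee \cdot P$, since $\ee$ has degree zero and is therefore insensitive to altering $P$ by a constant. So the whole problem boils down to building such a $P$ out of voltage functions on the two blocks.

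For the two-block case, I would pick voltage functions $P_i$ on the vertex set of $\Gamma_i$ solving $L_{\Gamma_i} P_i = \dd_i$ and normalized so that $P_i(p) = 0$; these exist because the pushforward $\pi_{i*}$ preserves degree, so each $\dd_i$ lies in the image of $L_{\Gamma_i}$. Glue them to a function $P$ on the vertex set of $\Gamma$ by $P|_{\Gamma_i} = P_i$; the value at $p$ is unambiguous since both $P_i$ vanish there. The heart of the argument is then the check that $L_\Gamma P = \dd$. Away from $p$, every vertex $C$ lies in a unique block $\Gamma_i$, all edges of $\Gamma$ incident to $C$ are edges of $\Gamma_i$, and $\pi_{i*}$ restricts to the identity there; so $(L_\Gamma P)(C) = (L_{\Gamma_i} P_i)(C) = \dd_i(C) = \dd(C)$. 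At the cut vertex $p$ itself the edges split cleanly between the two blocks, giving
\[ (L_\Gamma P)(p) = (L_{\Gamma_1} P_1)(p) + (L_{\Gamma_2} P_2)(p) = \dd_1(p) + \dd_2(p), \]
and expanding the two pushforwards yields $\dd_1(p) + \dd_2(p) = 2\dd(p) + \sum_{C \neq p} \dd(C) = \dd(p)$ by $\deg \dd = 0$. I expect this conservation-law verification at the cut vertex to be the one real obstacle, and it is exactly where the degree-zero hypothesis becomes indispensable.

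Once $P$ is in hand, the conclusion is bookkeeping. The final computation is
\[ g_\Gamma(\dd,\ee) = \sum_{C} \ee(C) P(C) = \sum_{C \in \Gamma_1} \ee_1(C) P_1(C) + \sum_{C \in \Gamma_2} \ee_2(C) P_2(C) = g_{\Gamma_1}(\dd_1,\ee_1) + g_{\Gamma_2}(\dd_2,\ee_2), \]
where the middle equality uses $\ee_i(C) = \ee(C)$ for vertices $C$ of $\Gamma_i$ other than $p$, combined with $P_i(p) = P(p) = 0$ to reconcile the contribution at $p$ on both sides without double counting. Induction on the number of pieces in the block decomposition then upgrades the two-block identity to the statement of the proposition.
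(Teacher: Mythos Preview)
Your proof is correct and takes a genuinely different route from the paper's. The paper first invokes Proposition~\ref{green_r} to rewrite $g_\Gamma(\dd,\ee) = -\tfrac{1}{2} r_\Gamma(\dd,\ee)$, reducing the statement to the additivity of effective resistance under pointed sums, which is then justified by appeal to the electric-circuit interpretation (series law across a cut vertex). Your argument is more algebraic and self-contained: you construct a global voltage function $P$ by gluing local solutions $P_i$ normalized to vanish at the cut vertex, and verify directly that $L_\Gamma P = \dd$, the key check being the conservation identity $\dd_1(p)+\dd_2(p)=\dd(p)$ forced by $\deg\dd=0$. This bypasses both the detour through effective resistance and the reliance on physical intuition, at the cost of a slightly longer but fully explicit computation; the paper's approach is shorter given that Proposition~\ref{green_r} is already in hand, and the resistance formulation makes the additivity intuitively transparent. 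One minor remark: in your final bookkeeping you use only that $\ee_i$ has degree zero (so that ${}^t\ee_i\cdot P_i$ computes $g_{\Gamma_i}(\dd_i,\ee_i)$ regardless of the normalization of $P_i$), which is indeed guaranteed since $\pi_{i*}$ preserves degree.
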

\begin{proof} By Proposition \ref{green_r} it suffices to prove that $r_\Gamma(C,C')=\sum_i r_{\Gamma_i}(\pi_i(C),\pi_i(C'))$ for vertices $C,C'$ of $\Gamma$. This can be deduced by induction on the number of bridges and $2$-connected components $\Gamma_i$ from the following property, which follows easily from the interpretation of $\Gamma$ as an electric circuit: let $C,C',C''$ be vertices of $\Gamma$, let $\Gamma', \Gamma''$ be subgraphs such that $\Gamma$ is the pointed sum of $\Gamma',\Gamma''$ along $C=\Gamma'\cap \Gamma''$. Assume $C' \in \Gamma'$ and $C'' \in \Gamma''$. Then $r_\Gamma(C',C'')=r_{\Gamma'}(C,C')+r_{\Gamma''}(C,C'')$.
\end{proof}
The following result will be crucial for Hain's conjecture.
\begin{prop} \label{positive} When viewed as a symmetric bilinear pairing on the set of divisors of degree zero on $\Gamma$, the Green's function $g_\Gamma$ is positive definite.
\end{prop}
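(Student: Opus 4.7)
The plan is to exploit the factorization $L = d_* d^*$ of the Laplacian. The coboundary $d^* \colon \qq^{T_0} \to \qq^{T_1}$ is given by $(d^* f)(e) = f(t(e)) - f(s(e))$, and in the standard bases of $\qq^{T_0}$ and $\qq^{T_1}$ the boundary $d_*$ is the transpose of $d^*$. Consequently, for every $f \in \qq^{T_0}$,
\[ \langle f, L f \rangle \;=\; \langle d^* f, d^* f \rangle \;=\; \sum_{e \in T_1} \bigl(f(t(e)) - f(s(e))\bigr)^2 \;\geq\; 0, \]
so $L$ is symmetric positive semi-definite. Its kernel is the set of $f$ with $d^* f = 0$, which by the connectedness of $\Gamma$ consists precisely of the constant functions; by symmetry of $L$, the image of $L$ is the orthogonal complement of $\ker L$, namely the space of divisors of degree zero, in agreement with what is already noted in Section \ref{section:graph}.

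Now let $\dd \in \qq^{T_0}$ be a divisor of degree zero. Since $\dd$ lies in the image of $L$, there exists $f \in \qq^{T_0}$ with $Lf = \dd$; such an $f$ is unique up to adding a constant. Using the symmetry of $L$ and $L^+$ together with the Moore--Penrose identity $L L^+ L = L$, one computes
\[ g_\Gamma(\dd, \dd) \;=\; \langle \dd, L^+ \dd \rangle \;=\; \langle L f, L^+ L f \rangle \;=\; \langle f, L L^+ L f \rangle \;=\; \langle f, L f \rangle \;\geq\; 0. \]
Equality forces $d^* f = 0$, so by connectedness $f$ is constant, whence $\dd = L f = 0$. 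This establishes the desired positive definiteness.

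I do not anticipate any serious obstacle, since the argument relies only on the definitions already introduced in Section \ref{section:graph} (the factorization $L = d_* d^*$ and the characterization of $L^+$ via $L L^+ L = L$). Two alternative routes are available: one may invoke Proposition \ref{green_r} and reduce the claim to the classical fact that effective resistance is a conditionally negative-definite kernel on $\qq^{T_0}$; or one may use Proposition \ref{additivity} to reduce to the case where $\Gamma$ is a bridge or a $2$-connected graph and then argue by direct calculation. The pseudoinverse computation above is, however, the most self-contained and avoids any case analysis.
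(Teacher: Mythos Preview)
Your argument is correct. The computation $g_\Gamma(\dd,\dd) = \langle Lf, L^+ Lf\rangle = \langle f, LL^+Lf\rangle = \langle f, Lf\rangle$ is valid because $L$ is symmetric, and the identification of the equality case is clean.

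The paper takes a slightly different route: it diagonalizes $L = U\Sigma U^{-1}$ and observes that the pseudoinverse is then $L^+ = U\Sigma^+ U^{-1}$, where $\Sigma^+$ inverts the nonzero diagonal entries. Since $L$ has signature $(0,+,\ldots,+)$, so does $L^+$; as the constant functions are isotropic for $L^+$, the restriction to degree-zero divisors is positive definite. Your approach avoids any appeal to eigenvalues or signature, instead using the Moore--Penrose identity $LL^+L = L$ to reduce directly to the positive semi-definiteness of $L$ itself. This is arguably more elementary and self-contained, relying only on the factorization $L = d_* d^*$ and the defining identity of $L^+$ already stated in the section. The paper's spectral argument, on the other hand, makes transparent that $L$ and $L^+$ have identical null and positive spaces, which is a slightly stronger structural statement than what you need.
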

\begin{proof} As $\Gamma$ is connected, the Laplacian $L$ is positive semi-definite with signature $(0+ \cdots +)$. Write $L=U\Sigma U^{-1}$ with $\Sigma$ a diagonal matrix. Then it is easily verified that $L^+=U\Sigma^+U^{-1}$, with $\Sigma^+$ the Moore-Penrose pseudo-inverse of $\Sigma$. Note that $\Sigma^+$ can be obtained from $\Sigma$ by replacing each non-zero diagonal entry by its reciprocal. It follows that $L^+$ has the same signature as $L$. As the constant divisors are isotropic for $L^+$, we find the required result.
\end{proof}
Now consider a proper flat morphism $\overline{X} \to \overline{S}$ with connected fibers of dimension one as before, with $\overline{S}$ a smooth proper complex curve, and $\overline{X}$ regular.  We continue to assume that the fibers of the morphism $\overline{X} \to \overline{S}$ are reduced and have only ordinary double points as singularities. Let $s$ be a closed point of $\overline{S}$, and write $F$ for the fiber of $\overline{X}\to \overline{S}$ at $s$. The \emph{reduction graph} $\Gamma$ of $F$ is then the finite graph given as follows: the set of vertices is the set of irreducible components of $F$, and an edge is drawn between two (possibly equal) vertices $C_1,C_2$ for each time the components $C_1,C_2$ intersect in a double point. The graph $\Gamma$ is connected since $F$ is connected. The Laplacian matrix of $\Gamma$ is equal to minus the intersection matrix $(\pair{C_i,C_j}_s)$ of $F$. Associating to each vertex $C$ the arithmetic genus of $C$ gives a polarisation $q$ on $\Gamma$.

Using the local intersection pairing above $s$, if $D,E$ are divisors on $\overline{X}$ we obtain divisors $\dd=\sum_C \pair{D,C}_s C$ and $\ee=\sum_C \pair{E,C}_s C$ on $\Gamma$, called the \emph{reductions} of $D,E$ on $\Gamma$, respectively. In particular, let $R$ be the relative dualising sheaf of $\overline{X} \to \overline{S}$, then by the adjunction formula the reduction of $R$ on $\Gamma$ is equal to the canonical divisor $K$ associated to the polarisation $q$. Recall that $\phi(D)$ is any $\qq$-divisor on $\overline{X}$ such that $D+\phi(D)$ has zero intersection product with all components of the fibers of $\overline{X}\to \overline{S}$.

We have the following connection with the Green's function on $\Gamma$.
\begin{prop} Assume both $D,E$ are divisors of relative degree zero on $\overline{X}/\overline{S}$, and consider their reductions $\dd=\sum_C \pair{D,C}_s C$ and $\ee=\sum_C \pair{E,C}_s C$ on $\Gamma$. Then the equality
\[ g_\Gamma(\dd,\ee)=-\pair{\phi(D),\phi(E)}_s \]
holds in $\qq$.
\end{prop}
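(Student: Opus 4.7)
\smallskip

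The plan is to interpret $\phi(D),\phi(E)$ as elements of $\qq^{T_0}$ and to translate the defining property of $\phi$ into a linear-algebraic equation involving the Laplacian $L$ of $\Gamma$. The identity will then drop out of the relation $LL^+L=L$ together with the fact that $\dd,\ee$ are divisors of degree zero on $\Gamma$.

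Write $\phi(D)=\sum_i a_i C_i$ and $\phi(E)=\sum_j b_j C_j$, and let $\vec a,\vec b\in\qq^{T_0}$ be the corresponding coefficient vectors. The defining property of $\phi(D)$ reads $\pair{D+\phi(D),C_j}_s=0$ for every component $C_j$ of $F$; since the Laplacian $L$ equals \emph{minus} the intersection matrix of the components of $F$, this can be rewritten as $L\vec a=\vec d$, where $\vec d=(\pair{D,C_j}_s)_j$ is the coefficient vector of the reduction $\dd$. Note that $\vec d$ has degree zero on $\Gamma$: indeed $\sum_j\pair{D,C_j}_s=\pair{D,F}_s=0$ since $D$ has relative degree zero and $F$ is a fiber, so linearly equivalent to any other fiber. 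This confirms that $\vec a$ is well defined up to the kernel of $L$, which consists precisely of the constant vectors --- corresponding exactly to the ambiguity of $\phi(D)$ modulo $\qq$-multiples of the fiber $F$. Of course the analogous statements hold for $\phi(E)$ and $\ee$.

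Now I compute the local intersection pairing of $\phi(D)$ and $\phi(E)$ at $s$ directly:
\[
\pair{\phi(D),\phi(E)}_s=\sum_{i,j}a_ib_j\pair{C_i,C_j}_s=-{}^t\vec a\,L\,\vec b=-{}^t\vec a\,\vec e,
\]
using $L\vec b=\vec e$. Substituting any particular solution $\vec a=L^+\vec d+\vec c$ with $L\vec c=0$ (so $\vec c$ is constant) and using that $\vec e$ has degree zero (hence ${}^t\vec e\,\vec c=0$), this simplifies to
\[
-\pair{\phi(D),\phi(E)}_s={}^t\vec e\,\vec a={}^t\vec e\,L^+\vec d=L^+(\dd,\ee)=g_\Gamma(\dd,\ee),
\]
which is the desired formula. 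In particular, the left-hand side is independent of the choices of $\phi(D),\phi(E)$, reconfirming the well-known fact that $\pair{\phi(D),\phi(E)}_s$ depends only on the restrictions of $D,E$ to the generic fiber.

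The argument is essentially a bookkeeping exercise, so the only real obstacle is tracking sign conventions and confirming the two degree-zero vanishings (both for $\dd$ and for the contribution of the constant kernel vector), which are what make the passage from $\vec a$ to $L^+\vec d$ harmless.
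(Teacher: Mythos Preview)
Your proof is correct and follows essentially the same approach as the paper: translate the defining property of $\phi(D)$ into the linear equation $L\vec a=\pm\vec d$ for the Laplacian $L$, then compute $-\pair{\phi(D),\phi(E)}_s={}^t\vec a\,L\,\vec b$ and reduce to ${}^t\vec d\,L^+\vec e$ via the pseudoinverse identity. If anything, you are slightly more explicit than the paper in handling the ambiguity of $\phi(D)$ by a constant vector, whereas the paper simply fixes the particular choice $\phi(D)=-L^+\dd$.
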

\begin{proof} Recall that $L$ equals minus the intersection matrix of $F$.
We find that when viewing both the reduction $\dd$ of $D$ and $\phi(D)$ as elements of $\qq^{T_0}$ we have the matrix equation $L\cdot \phi(D) = -\dd$. We can thus take $\phi(D)=-L^+\dd$ and likewise $\phi(E)=-L^+\ee$.  Hence
\[
-\pair{\phi(D),\phi(E)}_s = {}^t\phi(D) L \phi(E) = {}^t \dd L^+LL^+ \ee
 = {}^t\dd L^+ \ee = g_\Gamma(\dd,\ee) \, ,
\]
as required.
\end{proof}
\begin{cor} \label{green_admissible} Assume both $D,E$ are divisors of relative degree zero on $\overline{X}$, and write $\dd=\sum_C \pair{D,C}_s C$ and $\ee=\sum_C \pair{E,C}_s C$ for their reductions on $\Gamma$. Then the equality
\[ \pair{D,E}_{\textrm{a},s} =\pair{D,E}_s+g_\Gamma(\dd,\ee) \]
holds in $\qq$.
\end{cor}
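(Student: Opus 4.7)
The plan is to expand $\pair{D,E}_{\textrm{a},s} = \pair{D+\phi(D),E+\phi(E)}_s$ by bi-additivity of the local intersection pairing and then use the defining property of $\phi$ together with the preceding proposition, which identifies $-\pair{\phi(D),\phi(E)}_s$ with $g_\Gamma(\dd,\ee)$.

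First I would write
\[ \pair{D+\phi(D),E+\phi(E)}_s = \pair{D,E}_s + \pair{D,\phi(E)}_s + \pair{\phi(D),E}_s + \pair{\phi(D),\phi(E)}_s. \]
Then I would exploit the fact that $D+\phi(D)$ has zero intersection with every vertical divisor, and similarly for $E+\phi(E)$. Since $\phi(E)$ is supported on fiber components above $s$ (so is vertical) and the local pairing $\pair{\,,\,}_s$ only sees intersections over $s$, we obtain $\pair{D+\phi(D),\phi(E)}_s=0$, i.e.\ $\pair{D,\phi(E)}_s = -\pair{\phi(D),\phi(E)}_s$. The symmetric argument, using that $\phi(D)$ is vertical, gives $\pair{\phi(D),E}_s = -\pair{\phi(D),\phi(E)}_s$. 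Substituting these two identities into the expansion yields
\[ \pair{D+\phi(D),E+\phi(E)}_s = \pair{D,E}_s - \pair{\phi(D),\phi(E)}_s. \]

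Finally, I would invoke the immediately preceding proposition, which says $g_\Gamma(\dd,\ee) = -\pair{\phi(D),\phi(E)}_s$, to rewrite the right-hand side as $\pair{D,E}_s + g_\Gamma(\dd,\ee)$, completing the proof. Strictly speaking $\phi(D)$ and $\phi(E)$ are only $\qq$-divisors, but the intersection pairing extends $\qq$-linearly, so none of the manipulations above are affected. There is no real obstacle here; the statement is essentially a bookkeeping corollary of the bi-additivity of the intersection pairing, the orthogonality property characterising $\phi$, and the graph-theoretic identification of $\pair{\phi(D),\phi(E)}_s$ established just above.
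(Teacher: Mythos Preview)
Your proof is correct and follows essentially the same route as the paper's. The only cosmetic difference is that the paper starts from the equivalent expression $\pair{D,E}_{\textrm{a},s}=\pair{D+\phi(D),E}_s$, so it only needs to expand two terms and invoke a single orthogonality relation $\pair{E+\phi(E),\phi(D)}_s=0$ before applying the preceding proposition; your four-term expansion reaches the same identity $\pair{D,E}_s-\pair{\phi(D),\phi(E)}_s$ with one extra but harmless step.
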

\begin{proof} This follows from the previous proposition by observing that \[ \pair{D,E}_{\textrm{a},s}=\pair{D,E}_s+\pair{\phi(D),E}_s = \pair{D,E}_s - \pair{\phi(D),\phi(E)}_s \, , \]
where the latter equality holds because $\pair{E+\phi(E),\phi(D)}_s=0$.
\end{proof}
Assume finally that $\overline{X} \to \overline{S}$ is endowed with $n$ disjoint sections $x_1,\ldots,x_n \colon \overline{S} \to \overline{X}$. Let $I=\{x_1,x_2,\ldots,x_n\}$. We have a natural map $u \colon I \to T_0$ by associating to each $x_i \in I$ the unique irreducible component $C$ of $F$ such that $x_i$ intersects $C$ non-trivially. Let $e$ a bridge of $\Gamma$. Upon removing the interior of $e$ we obtain two connected graphs $\Gamma_1,\Gamma_2$ out of $\Gamma$, which we consider with their induced polarisations. Let $h$ with $0 \leq h \leq g$ be the genus of $\Gamma_1$; by additivity we have that $g-h$ is the genus of $\Gamma_2$. Let $P \subseteq I$ be a subset, and write $P^c=I \setminus P$. We say that $e$ is of \emph{type} $(P,h)$ if $u(P)$ is contained in the vertex set of $\Gamma_1$, and $u(P^c)$ is contained in the vertex set of $\Gamma_2$. Note that a bridge of type $(P,h)$ is also of type $(P^c,g-h)$.

\section{Proof of Theorem \ref{hainconj}} \label{section:conj}

Let $g\geq2$, $n \geq 1$ be integers, and let $d=(d_1,\ldots,d_n),m$ be an $n$-tuple of integers resp. an integer such that $\sum_i d_i = (2g-2)m$. Let $Y=\mm_{g,n}$ be the moduli space of smooth proper connected $n$-pointed complex curves of genus $g$, and let $\overline{Y}=\overline{\mm}_{g,n}$ denote its Deligne-Mumford compactification. Let $Z \to Y$ resp. $\overline{Z} \to \overline{Y}$ be the corresponding universal curves. We view each of $Y$, $Z$, $\overline{Y}$, $\overline{Z}$ as orbifolds.

Note that $\overline{Z} \to \overline{Y}$ comes equipped with an $n$-tuple $(x_1,\ldots,x_n)$ of sections, whose images $[x_i]$ in $\overline{Z}$ can be viewed as divisors on $\overline{Z}$. Let $\omega_{\overline{Z}/\overline{Y}}$ be the relative dualising sheaf of $\overline{Z} \to \overline{Y}$, and let $R$ be a divisor on $\overline{Z}$ such that $\mathcal{O}_{\overline{Z}}(R)$ is isomorphic to $\omega_{\overline{Z}/\overline{Y}}$. We then define $D$ to be the divisor $\sum_i d_i [x_i] - mR$ on $\overline{Z}$. We have a well defined Deligne pairing $\pair{D,D}$ on $\overline{Y}$.

Let $\jj_{g,n}$ be the pullback of the universal jacobian over $\mm_g$ to $Y$. Let $\bb$ be the Poincar\'e line bundle on the fiber product $\jj_{g,n} \times_Y \check{\jj}_{g,n}$, where $\check{\jj}_{g,n}$ denotes the dual torus fibration. We are interested in the normal function section
\[ F_{d} \colon Y \to \jj_{g,n} \, , \quad [C,(x_1,\ldots,x_n)] \mapsto [\mathrm{Jac}(C),\sum_i d_i[x_i] -mK_C] \, , \]
where $K_C$ is the class of a canonical divisor on $C$. Note  that the Abel-Jacobi class of the restriction of $D$ to the fiber of $Z \to Y$ over a point $[C,(x_1,\ldots,x_n)]$ and the image of $[C,(x_1,\ldots,x_n)]$ in $\jj_{g,n}$ under the normal function section $F_{d}$, are equal.

Let $\lambda \colon \jj_{g,n} \isom \check{\jj}_{g,n}$ denote the canonical principal polarization, and let $\hat{\bb}_\lambda$ denote the $C^\infty$-hermitian line bundle $(\mathrm{id},\lambda)^*\bb$ on $\jj_{g,n}$. Theorem \ref{pairingbiext} immediately gives the following.
\begin{prop} \label{nerontate} Let $\langle D,D \rangle$ be the restriction to $Y$ of the Deligne pairing of $D$ with itself. Then there exists an isometry
\[ \langle D,D \rangle ^{\otimes -1} \isom F_{d}^* \hat{\bb}_\lambda \]
of $C^\infty$-hermitian line bundles on $Y$.
\end{prop}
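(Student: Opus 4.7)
The plan is to reduce immediately to Corollary \ref{pairingbiext} by choosing the right pair of line bundles on the universal curve. Namely, take $\mathcal{L}=\mathcal{M}=\mathcal{O}_Z(D)$, where we still write $D$ for the restriction of the divisor $D=\sum_i d_i[x_i]-mR$ on $\overline{Z}$ to $Z$. The resulting normal function section $\nu \colon Y \to \jj_{g,n} \times_Y \jj_{g,n}$ from Section \ref{section:deligne}, obtained fiberwise by taking the Abel-Jacobi class of $\mathcal{L}$ and of $\mathcal{M}$, is then by construction the composition $\Delta \circ F_d$, where $\Delta \colon \jj_{g,n} \to \jj_{g,n} \times_Y \jj_{g,n}$ is the diagonal. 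This is exactly the observation recorded just before the statement: the Abel-Jacobi class of $D$ fiberwise equals $F_d$.

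Next I would compare the two incarnations of the ``polarised Poincar\'e bundle''. On $\jj_{g,n} \times_Y \jj_{g,n}$ we have $\bb_\lambda=(\mathrm{id},\lambda)^*\bb$ as in Section \ref{section:deligne}, while on $\jj_{g,n}$ we have $\hat{\bb}_\lambda=(\mathrm{id},\lambda)^*\bb$. A direct chase of the definitions shows
\[ \Delta^* \bb_\lambda = \Delta^*(\mathrm{id},\lambda)^*\bb = (\mathrm{id},\lambda)^*\bb = \hat{\bb}_\lambda \, , \]
and this identification is an isometry, because by definition the $C^\infty$-hermitian metric on $\hat{\bb}_\lambda$ is the one obtained from the canonical metric on $\bb$ by the same pullback procedure as the one defining the metric on $\bb_\lambda$.

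Combining the two observations,
\[ F_d^* \hat{\bb}_\lambda \;=\; F_d^* \Delta^* \bb_\lambda \;=\; (\Delta \circ F_d)^* \bb_\lambda \;=\; \nu^* \bb_\lambda \, , \]
isometrically. Corollary \ref{pairingbiext} applied with $\mathcal{L}=\mathcal{M}=\mathcal{O}_Z(D)$ provides a canonical isometry $\pair{\mathcal{O}_Z(D),\mathcal{O}_Z(D)}^{\otimes -1} \isom \nu^* \bb_\lambda$ of $C^\infty$-hermitian line bundles on $Y$, and $\pair{\mathcal{O}_Z(D),\mathcal{O}_Z(D)}=\pair{D,D}$ by our convention. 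Stringing the isometries together yields the desired $\pair{D,D}^{\otimes -1} \isom F_d^*\hat{\bb}_\lambda$.

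There is essentially no analytic obstacle: once the two Poincar\'e bundles are matched under $\Delta$, the proposition is a pure formality. The only point requiring minor care is the fact that $Y=\mm_{g,n}$ is a Deligne-Mumford orbifold rather than a scheme, so one should check that the Deligne pairing, the normal function, and Corollary \ref{pairingbiext} are sufficiently functorial to descend from an \'etale atlas. Since all isomorphisms involved are canonical, this descent is automatic; hence the proof is really just the diagram chase above.
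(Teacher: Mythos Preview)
Your proof is correct and is exactly the argument the paper has in mind: the paper's own proof consists of the single remark that the proposition follows immediately from Corollary~\ref{pairingbiext}, and you have simply spelled out the implicit identification $\Delta^*\bb_\lambda=\hat{\bb}_\lambda$ and $\nu=\Delta\circ F_d$ that makes this work. Nothing more is needed.
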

Now let $S$ denote a smooth connected complex curve with smooth compactification $\overline{S}$, and consider a morphism $\overline{f} \colon \overline{S} \to \overline{Y}$ such that the restriction $f$ of $\overline{f}$ to $S$ has image contained in $Y$. Let $(X \to S,(x_1,\ldots,x_n))$ resp. $(\overline{X} \to \overline{S},(x_1,\ldots,x_n))$ be families of pointed smooth resp. pointed semistable curves of genus $g$ with classifying morphisms $f$ resp. $\overline{f}$. We will assume that $\overline{X}$ is smooth over $\cc$ and extends $X$. By pullback along $\overline{f}$ we have a natural divisor $D$ on $\overline{X}$ compatible with the one on $\overline{Z}$ defined above, as well as a Deligne pairing $\langle D,D \rangle$ on $\overline{S}$ compatible with the pullback along $\overline{f}$ of the Deligne pairing $\pair{D,D}$ on $\overline{Y}$.

Write $\mathcal{L}$ for the $C^\infty$-hermitian line bundle $F_{d}^*\hat{\bb}_\lambda$ on $Y$. Our aim is to compare the Lear extension of $f^*\mathcal{L}$ from $S$ over $\overline{S}$ with the pullback of the Lear extension of $\mathcal{L}$ from $Y$ over $\overline{Y}$. We start with the former. We always neglect the possibility that taking tensor powers is necessary.
\begin{prop} \label{lear_curve_prep} Let $\phi(D)$ be a $\qq$-divisor on $\overline{X}$ such that $D+\phi(D)$ has zero intersection product with each irreducible component of every fiber of $\overline{X} \to \overline{S}$. Then the Lear extension of $f^*\mathcal{L}$ from $S$ over $\overline{S}$ is isomorphic with $\pair{D+\phi(D),D+\phi(D)}^{\otimes -1}$.
\end{prop}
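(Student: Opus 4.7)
The plan is to assemble Proposition \ref{nerontate}, suitably pulled back, with Theorem \ref{cor}, and to invoke the uniqueness of the Lear extension. The key observation is that essentially all the work has already been done: Proposition \ref{nerontate} identifies $\mathcal{L}$ with a Deligne pairing on $Y$, and Theorem \ref{cor} tells us how to extend Deligne pairings of generically-degree-zero divisors continuously over the boundary.

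First I would apply Proposition \ref{nerontate} on the moduli space $Y$ and pull back along $f$. By the compatibility of Deligne pairings with base change, and by naturality of $\hat{\bb}_\lambda$ and the classifying morphism $F_d$, the pullback yields an isometry
\[ f^*\mathcal{L} \isom \pair{D|_X,D|_X}^{\otimes -1} \]
of $C^\infty$-hermitian line bundles on $S$, where $D|_X$ is the restriction to the family $X \to S$ of the divisor $D = \sum_i d_i [x_i] - mR$ on $\overline{X}$ (which has relative degree $\sum_i d_i - m(2g-2) = 0$). Here I use that the Abel--Jacobi class of $D|_{X_p}$ in $\mathrm{Jac}(X_p)$ agrees with $(F_d \circ f)(p)$, as is built into the definitions.

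Next I would invoke Theorem \ref{cor} with $E = D$. Choose a positive integer $m$ large enough that $m\phi(D) = \phi(mD)$ has integral coefficients on $\overline{X}$. Theorem \ref{cor} then asserts that the $C^\infty$-hermitian line bundle $\pair{D|_X,D|_X}^{\otimes m^2} = \pair{mD|_X,mD|_X}$ has a unique extension as a continuous hermitian line bundle over $\overline{S}$, whose underlying line bundle is the Deligne pairing
\[ \pair{mD + \phi(mD),\, mD + \phi(mD)} = \pair{m(D+\phi(D)),\, m(D+\phi(D))} = \pair{D+\phi(D),\,D+\phi(D)}^{\otimes m^2} \]
on $\overline{S}$, where I used $\phi(mD) = m\phi(D)$ and the bi-multiplicativity of the Deligne pairing.

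Combining the two steps, the hermitian line bundle $(f^*\mathcal{L})^{\otimes m^2}$ extends continuously over $\overline{S}$ with underlying line bundle $\pair{D+\phi(D),D+\phi(D)}^{\otimes -m^2}$. By the uniqueness of the Lear extension, this is the Lear extension of $(f^*\mathcal{L})^{\otimes m^2}$; neglecting the passage to tensor powers (as per the convention adopted just before the statement of the proposition), the Lear extension of $f^*\mathcal{L}$ is isomorphic to $\pair{D+\phi(D),D+\phi(D)}^{\otimes -1}$. The only real subtleties are bookkeeping ones: verifying that the base-change compatibility of Proposition \ref{nerontate} really yields the isometry used in step one, and that the denominator issues introduced by $\phi(D) \in \operatorname{Div}(\overline{X})_\qq$ are absorbed by the tensor-power convention. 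There is no genuine obstacle beyond the content of Theorem \ref{cor} itself.
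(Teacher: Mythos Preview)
Your proposal is correct and follows essentially the same approach as the paper: pull back the isometry of Proposition~\ref{nerontate} along $f$ to identify $f^*\mathcal{L}$ with $\pair{D,D}^{\otimes -1}$ on $S$, then apply Theorem~\ref{cor} with $E=D$ and invoke uniqueness of the Lear extension. The only cosmetic issue is that you reuse the letter $m$ for the integer clearing denominators of $\phi(D)$, which clashes with the $m$ already fixed by $\sum_i d_i = (2g-2)m$; pick a different symbol.
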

\begin{proof} It follows from Proposition \ref{nerontate} and compatibility with pullback along $f \colon S \to Y$ that we have an isometry $f^*\mathcal{L} \isom \pair{D,D}^{\otimes -1}$ of hermitian line bundles on $S$. The Lear extension is (neglecting the possiblility of needing to take tensor powers) the unique extension of $f^*\mathcal{L}$ as a continuous hermitian line bundle over $\overline{S}$. By Theorem \ref{cor} the Lear extension of $f^*\mathcal{L}$ is thus isomorphic to $\pair{D+\phi(D),D+\phi(D)}^{\otimes -1}$.
\end{proof}
From Corollary \ref{green_admissible} we then obtain
\begin{cor} \label{lear_curve} For each closed point $s \in \overline{S}$ let $\Gamma_s$ be the reduction graph of the fiber of $\overline{X} \to \overline{S}$ at $s$, and let $\dd_s$ be the reduction of $D$ to $\Gamma_s$. Write $a_s = g_{\Gamma_s}(\dd_s,\dd_s)$.
Then the class in $\mathrm{Pic}(\overline{S})$ of the Lear extension of $f^*\mathcal{L}$ from $S$ over $\overline{S}$ is equal to the class of  $-\pair{D,D}-\sum_s a_s s$.
\end{cor}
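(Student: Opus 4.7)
The plan is to start from Proposition \ref{lear_curve_prep}, which identifies the Lear extension of $f^*\mathcal{L}$ with $\pair{D+\phi(D),\,D+\phi(D)}^{\otimes -1}$. Passing to classes in $\Pic(\overline{S}) \otimes \qq$ and inverting signs, it therefore suffices to establish the identity
\[
\bigl[\pair{D+\phi(D),\,D+\phi(D)}\bigr] \;=\; \bigl[\pair{D,D}\bigr] + \sum_s a_s \cdot [s].
\]

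The key computational ingredient I plan to use is the following formula for Deligne pairings against vertical divisors. For any line bundle $\mathcal{L}$ on $\overline{X}$ and any vertical $\qq$-divisor $V$ supported in the fiber $F_s$ over a single point $s$, one has
\[
\pair{\mathcal{L},\,\oo_{\overline{X}}(V)} \;\cong\; \oo_{\overline{S}}\bigl((\mathcal{L} \cdot V)_s \cdot s\bigr)
\]
in $\Pic(\overline{S}) \otimes \qq$. This is standard: the line bundle $\oo_{\overline{X}}(V)$ is trivial on $\overline{X} \setminus F_s$, so by the local nature of the Deligne pairing $\pair{\mathcal{L}, \oo_{\overline{X}}(V)}$ is trivial on $\overline{S} \setminus \{s\}$ and hence of the form $\oo_{\overline{S}}(n \cdot s)$; the coefficient $n$ is then forced by the standard relation $\deg\pair{\mathcal{L}, \oo_{\overline{X}}(V)} = \mathcal{L}\cdot V = (\mathcal{L}\cdot V)_s$.

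Equipped with this formula, I would expand bi-multiplicatively
\[
\pair{D+\phi(D),\,D+\phi(D)} \;\cong\; \pair{D+\phi(D),\,D} \otimes \pair{D+\phi(D),\,\phi(D)}
\]
and observe that the second tensor factor is trivial: decomposing $\phi(D) = \sum_s \phi(D)_s$ with each $\phi(D)_s$ supported in $F_s$, the vertical-divisor formula produces coefficient $((D+\phi(D)) \cdot \phi(D)_s)_s$ at each $s$, which vanishes by the very defining property of $\phi(D)$. Expanding once more, what remains is $\pair{D+\phi(D),D} \cong \pair{D,D} \otimes \pair{\phi(D),D}$, and one further application of the vertical-divisor formula yields
\[
\bigl[\pair{\phi(D),D}\bigr] \;=\; \sum_s (\phi(D)\cdot D)_s \cdot [s].
\]

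To finish, I would invoke Corollary \ref{green_admissible} applied with $E = D$, which, combined with the definition $\pair{D,D}_{\mathrm{a},s} = \pair{D,D}_s + \pair{\phi(D),D}_s$, gives the identification
\[
(\phi(D)\cdot D)_s \;=\; \pair{D,D}_{\mathrm{a},s} - \pair{D,D}_s \;=\; g_{\Gamma_s}(\dd_s,\dd_s) \;=\; a_s,
\]
producing the required formula. The only substantive step is the vertical-divisor computation of the Deligne pairing; everything else is symbolic bookkeeping in $\Pic(\overline{S}) \otimes \qq$, so no serious obstacle is expected.
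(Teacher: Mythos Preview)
Your proposal is correct and follows essentially the same approach as the paper. The paper itself records Corollary~\ref{lear_curve} with only the phrase ``From Corollary~\ref{green_admissible} we then obtain'', leaving implicit precisely the bilinear expansion of $\pair{D+\phi(D),D+\phi(D)}$ and the vertical-divisor computation that you spell out.
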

The Lear extension of $\mathcal{L}$ from $Y$ over $\overline{Y}$ is calculated in \cite[Theorem 11.5]{hainnormal}. We present here an alternative approach. We start with recalling the structure of the boundary divisor $\Delta$ of $\overline{Y}$, see e.g. \cite[Section~9]{hainnormal} whose notation we adopt. Each irreducible component of $\Delta$ has as generic point a stable $n$-pointed curve of genus $g$ with precisely one node. This leads to the following boundary components:
\begin{itemize}
\item $\Delta_0$: the generic point is an irreducible geometrically connected $n$-pointed stable curve with precisely one node;
\item $\Delta_0^P$, with $P$ a subset of the set $I=\{x_1,\ldots,x_n\}$ of marked points with $|P| \geq 2$: the generic point is a reducible stable curve with two geometrically connected components joined at a node, one of which has genus zero; the points in $P$ lie on the genus zero component minus the node, and the points in $P^c=I \setminus P$ lie on the other (genus $g$) component minus the node;
\item $\Delta_h^P$, where $1 \leq h \leq g-1$ and $P \subseteq I$ is a (possibly empty) subset of $I$: the generic point is a reducible stable curve with two geometrically connected components joined at a node, one of genus $h$, the other of genus $g-h$; the points in $P$ lie on the genus $h$ component minus the node, and the points in $P^c$ lie on the genus $g-h$ component minus the node.
\end{itemize}
We denote by $\delta_0$, $\delta_0^P$, $\delta_h^P$ the corresponding classes in $\mathrm{Pic}(\overline{Y})$. Note that $\delta_h^P=\delta_{g-h}^{P^c}$. Let $\pair{R,R}$ on $\overline{Y}$ be the Deligne pairing of $R$ with itself, and $\pair{[x_i],R}$ the Deligne pairing between $[x_i]$ and $R$. Then by \cite[Theorem 2]{ac} the group $\mathrm{Pic}(\overline{Y})$ is generated by the classes of
\[ \pair{R,R} \, , \quad \pair{[x_i],R} \, (i \in I) \, , \quad \delta_0 \, , \quad \delta_0^P \, (P \subseteq I \, , |P| \geq 2) \, , \quad \delta_h^P \, (1 \leq h \leq g/2 \, , P \subseteq I) \, , \]
with no linear relations between the $\delta$'s.

Consider a graph $\Gamma$ consisting of two vertices $C_1,C_2$, connected by one edge of unit length. Let $g_\Gamma$ denote its Green's function. First, let $P\subseteq I$ be a subset with $|P|\geq 2$. Write
\[ \dd_0^P(d) = (m+\sum_{i \in P}d_i)C_1+(-(2g-1)m+\sum_{i \in P^c}d_i)C_2 \, , \]
and define $a(P,d)=g_\Gamma(\dd_0^P(d),\dd_0^P(d))$. Next, let $P \subseteq I$ be an arbitrary subset and $h$ an integer with $1 \leq h \leq g-1$. Then write
\[ \dd_h^P(d)=(-(2h-1)m+\sum_{i \in P}d_i)C_1 + (-(2(g-h)-1)m+\sum_{i \in P^c}d_i)C_2\, , \]
and put $a(P,h,d)=g_\Gamma(\dd_h^P(d),\dd_h^P(d))$. A straightforward calculation (see also Section~\ref{section:calc} below) shows that $a(P,d)$ and $a(P,h,d)$ are in fact integers. Note moreover that $a(P,h,d)=a(P^c,g-h,d)$.
\begin{prop} \label{lear_moduli_space}
Let  $\mathcal{L}_{\overline{Y}}$ denote the Lear extension of $\mathcal{L}$ from $Y=\mm_{g,n}$ over $\overline{Y}=\overline{\mm}_{g,n}$. Then the equality
\[ \mathcal{L}_{\overline{Y}} = -\pair{D,D}-\sum_{P \subseteq I, |P| \geq 2} a(P,d)\delta_0^P - \frac{1}{2}\sum_{h=1}^{g-1} \sum_{P \subseteq I} a(P,h,d)\delta_h^P   \]
holds in $\mathrm{Pic}(\overline{\mm}_{g,n})$.
\end{prop}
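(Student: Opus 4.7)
The plan is to derive the formula from Corollary \ref{lear_curve} via a test-curve analysis, reading off each boundary coefficient by intersecting with a generic one-parameter degeneration. By Proposition \ref{nerontate}, the line bundle $-\pair{D,D}$ (which is well-defined on all of $\overline{Y}$ via the extension of $D$ to $\overline{Z}$) restricts on $Y$ to $\mathcal{L}$. Hence $\mathcal{L}_{\overline{Y}}+\pair{D,D}$ is a line bundle on $\overline{Y}$ that is trivial over $Y$, so by \cite{ac} its class lies in the subgroup generated by the boundary divisors. I would therefore write
\[ \mathcal{L}_{\overline{Y}}+\pair{D,D} \;=\; c_0\,\delta_0 \;+\!\! \sum_{\substack{P\subseteq I\\ |P|\geq 2}}\!\! c_0^P\,\delta_0^P \;+\; \sum_{h=1}^{g-1}\sum_{P\subseteq I} c_h^P\,\delta_h^P \]
(with the understanding that $c_h^P = c_{g-h}^{P^c}$), and proceed to determine each coefficient $c_*$.

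To pin down $c_*$ for a given component $\Delta_*$, I would select a test morphism $\overline{f}\colon \overline{S}\to\overline{Y}$ where $\overline{S}$ is a smooth complete curve whose image meets the boundary $\Delta$ transversally at a single point $s$ lying on the smooth locus of $\Delta_*$, with the fiber $\overline{X}_s$ having precisely one node of the combinatorial type generic to $\Delta_*$. Such test curves exist by standard plumbing/smoothing constructions. Since $\overline{f}(\overline{S})\subseteq \overline{Y}\setminus\Delta^{\mathrm{sing}}$, the uniqueness part of Lear's theorem gives $\overline{f}^*\mathcal{L}_{\overline{Y}} \isom (f^*\mathcal{L})_{\overline{S}}$. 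Applying Corollary \ref{lear_curve} and the transversality relations $\overline{f}^*\delta_* = [s]$, $\overline{f}^*\delta_{**}=0$ for $**\neq *$, I obtain
\[ c_* \,=\, -a_s \,=\, -g_{\Gamma_s}(\dd_s,\dd_s), \]
where $\Gamma_s$ is the reduction graph and $\dd_s$ the reduction of $D$ at~$s$.

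It remains to identify $a_s$ in each case by a direct reduction-graph computation. For the component $\Delta_0$ the fiber $\overline{X}_s$ is irreducible, the graph $\Gamma_s$ has a single vertex, and since $D$ has relative degree zero the reduction $\dd_s$ vanishes; hence $c_0=0$, which matches the absence of a $\delta_0$ term. For $\Delta_0^P$ with $|P|\geq 2$, the graph $\Gamma_s$ is the two-vertex/one-edge graph $\Gamma$ of the statement, with $C_1$ the rational component and $C_2$ the genus-$g$ component; using $\pair{[x_i],C_j}_s=\delta_{ij\in C_j}$ and the adjunction identity $\pair{R,C_j}_s = 2p_a(C_j)-2+v(C_j)$, one computes $\dd_s = \dd_0^P(d)$, so $c_0^P = -a(P,d)$. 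For $\Delta_h^P$ the same computation gives $\dd_s=\dd_h^P(d)$ and $c_h^P=-a(P,h,d)$. The factor $\tfrac{1}{2}$ in the $\Delta_h^P$ sum of the proposition finally accounts for the double-counting produced by the symmetry $\delta_h^P=\delta_{g-h}^{P^c}$ together with $a(P,h,d)=a(P^c,g-h,d)$. The main obstacle I anticipate is the bookkeeping for the reduction calculation that identifies $\dd_s$ with $\dd_0^P(d)$ and $\dd_h^P(d)$, together with verifying the existence of sufficiently generic transversal test curves—both routine in moduli theory but requiring some care about the orbifold structure of $\overline{\mm}_{g,n}$.
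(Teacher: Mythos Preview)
Your proposal is correct and follows essentially the same argument as the paper's own proof: both write the difference $\mathcal{L}_{\overline{Y}}+\pair{D,D}$ as a boundary combination, use test curves transversal to a single smooth boundary point so that the Lear extension commutes with pullback, invoke Corollary~\ref{lear_curve} to identify each coefficient as $-g_{\Gamma_s}(\dd_s,\dd_s)$, and then read off the reduction graph and divisor for each boundary type (a loop for $\Delta_0$, a single bridge for $\Delta_0^P$ and $\Delta_h^P$). Your treatment of the $\tfrac{1}{2}$ as arising from the $\delta_h^P=\delta_{g-h}^{P^c}$ symmetry and the extra detail on the adjunction computation for $\dd_s$ are exactly in line with the paper's reasoning.
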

\begin{proof} As $\mathcal{L}_{\overline{Y}}$ and $\pair{D,D}^{\otimes -1}$ coincide over $Y$ by Proposition \ref{nerontate}, there exist unique rational numbers $\alpha(0,d)$, $\alpha(P,d)$ and $\alpha(P,h,d)$ such that the identity
\[ \mathcal{L}_{\overline{Y}} = - \pair{D,D}-\alpha(0,d)\delta_0-\sum_{P \subseteq I, |P| \geq 2} \alpha(P,d)\delta_0^P - \frac{1}{2}\sum_{h=1}^{g-1} \sum_{P \subseteq I} \alpha(P,h,d)\delta_h^P   \]
holds in $\mathrm{Pic}(\overline{Y})$. Our task is to show that $\alpha(0,d)=0$, $\alpha(P,d)=a(P,d)$ and $\alpha(P,h,d)=a(P,h,d)$. We do this using the method of test curves. Let $\Delta_\bullet$ be an irreducible component of the boundary divisor $\Delta$ of $\overline{Y}$. Then the singular locus of $\Delta_\bullet$ is given by the set of classes of stable curves in $\Delta_\bullet$ with at least two nodes. Let $S$ be a smooth connected complex curve with smooth compactification $\overline{S}$ and let $\overline{f} \colon \overline{S} \to \overline{Y}$ be a test curve such that the generic point of $\overline{S}$ maps to $Y$, and the image intersects the boundary divisor $\Delta$ transversely in a non-singular point of $\Delta_\bullet$.

Consider first the case that $\Delta_\bullet \neq \Delta_0$. Let $-\alpha$ be the coefficient of the class of $\Delta_\bullet$ in $\mathcal{L}_{\overline{Y}} \otimes \pair{D,D}$. Let $f$ be the restriction of $\overline{f}$ to $S$. As the image of $\overline{f}$ does not intersect the singular locus of $\Delta$, the Lear extension of $f^*\mathcal{L}$ equals the pullback $\overline{f}^*(\mathcal{L}_{\overline{Y}})$ of the Lear extension of $\mathcal{L}$ over $\overline{Y}$. Let $\Gamma$ be the reduction graph of the stable curve $\overline{f}(s)$ and let $\dd$ be the reduction of $D$ to $\Gamma$.  By Corollary \ref{lear_curve} we find locally around $s$ that
\[ \mathcal{O}_{\overline{S}}(-\alpha s) = \mathcal{O}_{\overline{S}}(-\alpha \overline{f}^*\Delta_\bullet) = \overline{f}^*\mathcal{L}_{\overline{Y}} \otimes \pair{D,D} = (f^*\mathcal{L})_{\overline{S}} \otimes \pair{D,D} = \mathcal{O}_{\overline{S}}(-g_\Gamma(\dd,\dd)s) \]
hence $\alpha = g_\Gamma(\dd,\dd)$. Now $\Gamma$ consists of two vertices $C_1,C_2$ joined by one edge. Moreover if $\Delta_\bullet=\Delta_0^P$ resp. $\Delta_\bullet=\Delta_h^P$ the reduction of the divisor $\dd$ on $\Gamma$ precisely coincides with the divisor $\dd_0^P$ resp. $\dd_h^P$ on $\Gamma$. Hence we find
\[ \alpha(P,d) = g_\Gamma(\dd_0^P,\dd_0^P)=a(P,d) \quad \mathrm{and} \quad
\alpha(P,h,d) = g_\Gamma(\dd_h^P,\dd_h^P)=a(P,h,d)  \, . \]
It remains to show that $\alpha(0,d)=0$. In the case $\Delta_\bullet=\Delta_0$, the generic point of $\Delta_\bullet$ is an irreducible $n$-pointed curve with one node (with neither of the marked points passing through the node). The reduction graph $\Gamma$ has one vertex, and one edge, and the reduction $\dd$ of $D$ to $\Gamma$ is the empty divisor. We find $\alpha(0,d)=g_\Gamma(\dd,\dd)=0$ as required.
\end{proof}
\begin{proof}[Proof of Theorem \ref{hainconj}] As in the theorem let $S$ denote a smooth connected complex curve with smooth compactification $\overline{S}$, let $\overline{f} \colon \overline{S} \to \overline{Y}$ be a morphism such that the restriction $f$ of $\overline{f}$ to $S$ has image contained in $Y$, and let $X \to S$ and $\overline{X} \to \overline{S}$ be families of smooth resp. semistable curves of genus $g$ with classifying morphisms $f$ resp. $\overline{f}$. We assume that $\overline{X}$ is smooth over $\cc$ and extends $X$.

Let $J$ be the jumping divisor on $\overline{S}$ with respect to $f \colon S \to \mm_{g,n}$ and $F_{d}$, that is we have
\[ \mathcal{O}_{\overline{S}}(J) = \overline{f}^* \mathcal{L}_{\overline{Y}}\otimes (f^*\mathcal{L})^{\otimes -1}_{\overline{S}} \, . \]
Write $J=\sum j_s s$ with $j_s \in \qq$; note that we have $j_s=0$ for each $s\in S$. Our aim is to prove that $j_s\geq 0$ for each $s \in \overline{S}$.

Fix a closed point $s \in \overline{S}$. Let $\Gamma$ be the reduction graph of the fiber of $\overline{X} \to \overline{S}$ at $s$, and let $\dd$ be the reduction of $D$ to $\Gamma$. Recall that $\Gamma$ is naturally a polarised graph of genus $g$. Write $a = g_{\Gamma}(\dd,\dd)$. Let $\ell_0^P$ (for $P\subseteq I$ with $|P| \geq 2$) denote the number of edges of type $(0,P)$ in $\Gamma$, and let $\ell_h^P$ (for $1 \leq h \leq g-1$ and $P \subseteq I$) denote the number of edges of type $(h,P)$ in $\Gamma$. The geometric meaning of these numbers is that the local multiplicity at $s$ of $\overline{f}^*(\Delta_0^P)$ equals $\ell_0^P$, and the local multiplicity at $s$ of $\overline{f}^*(\Delta_h^P)$ equals $\ell_h^P$.
By combining Corollary \ref{lear_curve} and Proposition \ref{lear_moduli_space} we find that $j_s$ equals
\[ j_s=-\sum_{P \subseteq I, |P| \geq 2} a(P,d)\ell_0^P - \frac{1}{2} \sum_{h=1}^{g-1} \sum_{P \subseteq I} a(P,h,d)\ell_h^P + a \, . \]
We need to show, therefore, that the inequality
\[ a \geq \sum_{P \subseteq I, |P| \geq 2} a(P,d)\ell_0^P + \frac{1}{2} \sum_{h=1}^{g-1} \sum_{P \subseteq I} a(P,h,d)\ell_h^P \]
holds. We use the additivity relation from Proposition \ref{additivity}. Write $\Gamma=\sum_i \Gamma_i$ as the pointed sum of its bridges and $2$-connected components. We view each $\Gamma_i$ as a polarised graph with the polarisation induced from $\Gamma$. Let $g_{\Gamma_i}$ denote the Green's function on the subgraph $\Gamma_i$. Let $\pi \colon \Gamma \to \Gamma_i$ denote the natural projection map. For each $i$ write $\dd_i=\pi_{i*}(\dd)$.

Assume first of all that $\Gamma_i$ is a bridge. Then $\Gamma_i$ is an edge of $\Gamma$, of some type $(0,P)$ or $(h,P)$. Let $C_1,C_2$ be the vertices of $\Gamma_i$. Assume that the type is $(0,P)$, and that $C_1$ has genus zero. Then $\dd_i$ equals the divisor called $\dd_0^P(d)$ above. We find that the equality $g_{\Gamma_i}(\dd_i,\dd_i)=a(P,d)$ holds. Assume that the type is $(h,P)$, and that $C_1$ has genus $h$, then $\dd_i$ equals the divisor called $\dd_h^P(d)$ above. Hence we have $g_{\Gamma_i}(\dd_i,\dd_i)=a(P,h,d)$.

Next assume that $\Gamma_i$ is a $2$-connected component of $\Gamma$. By Corollary \ref{positive} we have that $g_{\Gamma_i}(\dd_i,\dd_i) \geq 0$.

Recall that the number of bridges in $\Gamma$ of type $(0,P)$ is $\ell_0^P$, and the number of bridges in $\Gamma$ of type $(h,P)$ is $\ell_h^P$. Using Proposition \ref{additivity} we obtain
\[ \begin{split}
a = g_\Gamma(\dd,\dd) & =  \sum_i g_{\Gamma_i}(\dd_i,\dd_i) \\
 & \geq \sum_{P \subseteq I, |P| \geq 2} a(P,d)\ell_0^P + \frac{1}{2} \sum_{h=1}^{g-1} \sum_{P \subseteq I} a(P,h,d)\ell_h^P
\end{split} \]
as required.
\end{proof}
\begin{remark}  The proof of Theorem \ref{hainconj} above yields the additional information that there is no height jumping for morphisms $\overline{S} \to \overline{\mm}_{g,n}$ such that the image is contained in $\overline{\mm}_{g,n} \setminus \Delta_0^{\mathrm{sing}}$. For morphisms $\overline{S} \to \overline{\mm}_{g,n}$ with image contained in $\overline{\mm}_{g,n} \setminus \Delta_0$ this can alternatively be deduced from the fact that the variation of polarised Hodge structure that gives rise to $\jj_{g,n}$ and the normal function sections $F_{d}$ extend naturally over $\overline{\mm}_{g,n} \setminus \Delta_0$, the moduli space of stable curves of compact type, cf. \cite[Section~5]{hainnormal}.
\end{remark}

\section{Explicit formula} \label{section:calc}

It is straightforward to compute the numbers $a(P,d)$ and $a(P,h,d)$ and thus to write down the Lear extension of $\mathcal{L}$ from $\mm_{g,n}$ over $\overline{\mm}_{g,n}$ explicitly. Applying Proposition \ref{green_r} we find
\[ a(P,d) = (m+\sum_{i \in P}d_i)^2 \, , \quad a(P,h,d) = (-(2h-1)m+\sum_{i \in P}d_i)^2 \, . \]
The formula in Theorem \ref{lear_moduli_space} becomes
\[ \mathcal{L}_{\overline{Y}} = -\pair{D,D}-\sum_{P \subseteq I, |P| \geq 2} (m+\sum_{i \in P}d_i)^2\delta_0^P - \frac{1}{2}\sum_{h=1}^{g-1} \sum_{P \subseteq I} (-(2h-1)m+\sum_{i \in P}d_i)^2\delta_h^P  \, . \]
We can make the Deligne pairing $\pair{D,D}$ explicit as follows. We have
\[ \pair{D,D} = \pair{ \sum_i d_i[x_i] - mR, \sum_i d_i [x_i] - mR} \, , \]
which gives
\[ \pair{D,D} = \sum_{i,j} d_id_j \pair{[x_i],[x_j]} - 2m \sum_i d_i \pair{[x_i],R} + m^2 \pair{R,R} \, . \]
Noting that $\pair{[x_i],[x_j]}$ is trivial if $i \neq j$, and $\pair{[x_i],[x_i]}=-\pair{[x_i],R}$ by the adjunction formula, we obtain
\[ \pair{D,D} = -\sum_i (d_i^2 + 2md_i)\pair{[x_i],R} + m^2\pair{R,R} \, . \]
This leads to
\[ \begin{split}
\mathcal{L}_{\overline{Y}} = & -m^2\pair{R,R} + \sum_i (d_i^2 + 2md_i)\pair{[x_i],R} \\
& -\sum_{P \subseteq I, |P| \geq 2} (m+\sum_{i \in P}d_i)^2\delta_0^P - \frac{1}{2}\sum_{h=1}^{g-1} \sum_{P \subseteq I} (-(2h-1)m+\sum_{i \in P}d_i)^2\delta_h^P  \, . \end{split} \]
Let $\kappa_1$ be the pullback to $\overline{Y}$ of the $\kappa$-class on $\overline{\mm}_g$. For each $i \in I$ denote by $p_i \colon \overline{Y} \to \overline{\mm}_{g,1}$ the projection keeping the $i$-th marked point, and let $\psi_i$ denote the pullback along $p_i$ of the $\psi$-class on $\overline{\mm}_{g,1}$. By the Noether formula and the remarks on \cite[p. 161]{ac} these classes are related to our earlier $\pair{R,R}$ and $\pair{[x_i],R}$ via the relations
\[ \pair{R,R} = \kappa_1 - \sum_{|P| \geq 2} \delta_0^P \, , \quad \pair{[x_i],R}=\psi_i + \sum_{P \ni i, |P| \geq 2} \delta_0^P \, . \]
We end up with the identity
\[ \begin{split}
\mathcal{L}_{\overline{Y}} = & -m^2 \kappa_1 + \sum_i (d_i^2+2md_i)\psi_i \\
& - 2 \sum_{P \subseteq I} \sum_{ \{ x_j, x_k \} \subset P, j \neq k} d_jd_k \delta_0^P - \frac{1}{2} \sum_{h=1}^{g-1} \sum_{P \subseteq I} (-(2h-1)m+\sum_{i \in P}d_i)^2\delta_h^P
\end{split} \]
in $\mathrm{Pic}(\overline{\mm}_{g,n})$. This gives an alternative proof of \cite[Theorem 11.5]{hainnormal}.

\vspace{0.5cm}

\noindent Address of the authors:\\ \\
Mathematical Institute  \\
University of Leiden  \\
PO Box 9512  \\
2300 RA Leiden  \\
The Netherlands  \\ \\
Email: \verb+{holmesdst,rdejong}@math.leidenuniv.nl+


\begin{thebibliography}{99}

\bibitem{ar} S. Y. Arakelov, \emph{An intersection theory for divisors on an
arithmetic surface}. Izv. Akad. USSR 86 (1974), 1164--1180.

\bibitem{acg} E. Arbarello, M. Cornalba, P. Griffiths, \emph{Geometry of Algebraic Curves. Volume II}. Springer Verlag, 2011.

\bibitem{ac} E. Arbarello, M. Cornalba, \emph{The Picard groups of the moduli spaces of curves}. Topology 26 (1987), 153--171.

\bibitem{beil} A. Beilinson, \emph{Height pairing between algebraic cycles}. In: Current trends in arithmetical algebraic geometry (Arcata, Calif., 1985), Contemp. Math. 67 (1987), 1--24.

\bibitem{bl} C. Birkenhake, H. Lange, \emph{Complex abelian varieties}. Grundlehren der mathematischen Wissenschaften vol. 302. Springer Verlag 2004.

\bibitem{blr} S. Bosch, W. L\"utkebohmert, M. Raynaud, \emph{N\'eron models}. Ergebnisse der Mathematik und ihrer Grenzgebiete 3. Folge, Band 21. Springer-Verlag 1990.

\bibitem{bost} J.-B. Bost, \emph{Green's currents and height pairing on complex tori}.  Duke Math. J. 61 (1990), no. 3, 899--912.

\bibitem{brospearl} P. Brosnan, G. Pearlstein, \emph{Jumps in the archimedean height}. Unpublished manuscript, 2006.

\bibitem{chai} C.-L. Chai, \emph{Siegel moduli schemes and their compactifications over $\cc$}. In: G. Cornell, J. Silverman (eds.), Arithmetic Geometry, Springer Verlag, 1986.

\bibitem{de} P. Deligne, \emph{Le d\'eterminant de la cohomologie}. In: Current trends in arithmetical algebraic geometry (Arcata, Calif., 1985), Contemp. Math. 67 (1987), 93--177.

\bibitem{fc} G. Faltings, C. Chai, \emph{Degeneration of Abelian Varieties}. Ergebnisse der Mathematik und ihrer Grenzgebiete, 3. Folge, vol. 22, Springer Verlag, 1990.

\bibitem{gs} H. Gillet, C. Soul\'e, \emph{Intersection sur les vari\'et\'es d'Arakelov}. C.R. Acad. Sci. Paris S\'er I Math. 299 (1984), 563--566.

\bibitem{gr} B. Gross, \emph{Local heights on curves}. In: G. Cornell, J. Silverman (eds.), Arithmetic Geometry, Springer Verlag, 1986.

\bibitem{SGA7} A. Grothendieck, \emph{S\'eminaire de G\'eom\'etrie Alg\'ebrique du Bois Marie - 1967-69 - Groupes de monodromie en g\'eom\'etrie alg\'ebrique - (SGA 7) - vol. 1}. Lecture Notes in Mathematics 288 (1972). Berlin; New York: Springer-Verlag.

\bibitem{hainbiext} R. Hain, \emph{Biextensions and heights associated to curves of odd genus}. Duke Math. J. 61 (1990), 859--898.

\bibitem{torelli} R. Hain, \emph{Torelli groups and geometry of moduli spaces of curves}.  In: Current topics in complex algebraic geometry (Berkeley, CA, 1992/93), 97--143, Math. Sci. Res. Inst. Publ., 28, Cambridge Univ. Press, Cambridge, 1995.

\bibitem{hrar} R. Hain, D. Reed, \emph{On the Arakelov geometry of moduli spaces of curves}. J. Differential Geom.  67 (2004), 195--228.

\bibitem{hainnormal} R. Hain, \emph{Normal functions and the geometry of moduli spaces of curves}. In: G. Farkas and I. Morrison (eds.), Handbook of Moduli, Volume I.  Advanced Lectures in Mathematics, Volume XXIV, International Press, Boston, 2013.

\bibitem{lear} D. Lear, \emph{Extensions of normal functions and asymptotics of the height pairing}. PhD thesis, University of Washington, 1990.

\bibitem{liu} Q. Liu, \emph{Algebraic Geometry and Arithmetic Curves}. Oxford University Press, 2006.

\bibitem{mb} L. Moret-Bailly, \emph{M\'etriques permises}. In: S\'eminaire sur les pinceaux arithm\'etiques, Ast\'erisque 127 (1985), 29--88.

\bibitem{nak} I. Nakamura, \emph{Relative compactification of the N\'eron model and its application}. In: W. Baily, T. Shioda (eds.), Complex Analysis and Algebraic Geometry. Cambridge University Press 1977.

\bibitem{pearl} G. Pearlstein, \emph{$\mathrm{SL}_2$-orbits and degenerations of mixed Hodge structure}. J. Diff. Geometry 74 (2006), 1--67.

\bibitem{si} C. L. Siegel, \emph{Symplectic geometry}. Johns Hopkins University Press 1964.

\bibitem{vo} C. Voisin, \emph{Hodge Theory and Complex Algebraic Geometry, I}. Cambridge Studies in Advanced Mathematics, 2002.

\bibitem{we} R. Wentworth, \emph{The asymptotics of the Arakelov-Green's function and Faltings' delta invariant}.  Comm. Math. Phys.  137  (1991), 427--459.

\bibitem{zh} S. Zhang, \emph{Admissible pairing on a curve}. Invent. Math. 112 (1993), 171--193.



\end{thebibliography}
\end{document}